\theoremstyle{definition}
\newtheorem{thm}{Theorem}[section]
\newtheorem{lem}[thm]{Lemma}
\newtheorem*{lem*}{Lemma}
\newtheorem*{thm*}{Theorem}
\newtheorem*{cor*}{Corollary}
\newtheorem{prop}[thm]{Proposition}
\newtheorem{cor}[thm]{Corollary}
\newtheorem{defn}[thm]{Definition}
\newtheorem*{remark*}{Remark}
\newtheorem{remark}{Remark}
\newtheorem{example}{Example}
\newtheorem{cor/defn}[thm]
{Corollary/Definition}
\DeclareMathOperator{\bF}{\mathbb{F}}
\DeclareMathOperator{\cH}{\mathcal{H}}
\DeclareMathOperator{\cB}{\mathcal{B}}
\DeclareMathOperator{\SYT}{\mathrm{SYT}}
\DeclareMathOperator{\Par}{\mathbb{Y}}
\title{Non-archimedean Infinite Hecke Algebra}
\author{Milo Bechtloff Weising}
\address{Department of Mathematics (0123),
460 McBryde Hall, Virginia Tech,
225 Stanger Street,
Blacksburg, VA 24061-1026}
\email{milojbw@vt.edu}
\date{\today}
\begin{document}

\maketitle 

\begin{abstract}
    We study the representation theory of the infinite type A Hecke algebra over a non-archimedean field in the case where the parameter is a pseudo-uniformizer. Specifically, we consider a family of representations, called almost-symmetric, which satisfy additional topological and algebraic constraints. We construct a family of irreducible almost-symmetric representations indexed by integer partitions which arise as topological completions of specific direct limits of Hecke-Specht modules. Our main result is that every irreducible almost-symmetric representation contains one of these constructed irreducibles as a dense submodule. We give detailed analysis of these representations and construct functionals analogous to finite Hecke algebra traces. 
\end{abstract}

\tableofcontents

\section{Introduction}

The study of symmetric groups is a meeting point for representation theory and combinatorics. This, too, is true for the infinite symmetric group $\mathfrak{S}_{\infty}$ but most approaches to understanding this group involve analytic techniques \cite{BO16}. As the Edrei-Thoma Theorem shows, this is to some extent unavoidable since the harmonic analysis of $\mathfrak{S}_{\infty}$ is intimately tied to deep subjects in analysis. Even though combinatorial methods are crucial for the character theory of the infinite symmetric group, the well-behaved explicit representations of $\mathfrak{S}_{\infty}$ are often not combinatorial in nature \cite{Okounkov97} \cite{Olshanski03}. In fact, some of the most natural combinatorial representations of $\mathfrak{S}_{\infty}$ are somewhat counter-intuitively poorly behaved.

The structure of type $A$ finite Hecke algebras over $\mathbb{C}$ depends heavily on the value of their parameter $t \in \mathbb{C}.$ Generically, they admit a nearly identical representation theory as the symmetric groups. Namely, for generic $t \in \mathbb{C}$, they are semisimple and their irreducible representations are indexed by integer partitions with dimensions given by counting standard Young tableaux. As a result, the harmonic analysis of the infinite type $A$ Hecke algebras over $\mathbb{C}$ are in some ways analogous to their symmetric group counterpart for $t \in \mathbb{C}$ generic \cite{VK98} \cite{GKV14}.

Recently, coming from the study of double affine Hecke algebras and the combinatorics of LLT and Macdonald polynomials, it  has become useful to consider Hecke algebras over non-archimedean fields \cite{Ion_2022}\cite{IonWu25}\cite{BW_Delta}\cite{BW25}\cite{BHMPS}. Typically, the field in question is $\mathbb{Q}((t))$ where $t$ is considered both as the topological generator of the Laurent series field $\mathbb{Q}((t))$ and as the parameter of the Hecke algebras. For finite Hecke algebras the presence of the non-archimedean topology does not make any significant difference to the underlying representation theory. However, there seem to be more important differences in the case of the infinite Hecke algebra. Specifically, the non-archimedean topology allows for the construction of new projection operators using partially-trivial idempotent elements. These results suggest that the structure of the infinite Hecke algebra might be significantly different over archimedean and non-archimedean fields at least when the parameter $t$ is a pseudo-uniformizer. 

The goal of this paper is to initiate the study of the infinite Hecke algebra $\cH_{\infty}$ over non-archimedean fields $\bF$. Because the entire representation theory of $\cH_{\infty}$ seems to be intricate, we opt to instead study a class of representations which we call almost-symmetric. These are continuous actions of $\cH_{\infty}$ on non-archimedean Banach spaces which admit an extended nontrivial continuous action by idempotent elements $\epsilon_{k}$. Note that over non-archimedean fields infinite dimensional Banach spaces are not reflexive and so we are forced to consider actions on Banach spaces as opposed to the usual situation over $\mathbb{C}$ where one considers actions on Hilbert spaces. For every partition $\lambda$, we build an almost-symmetric representation $\hat{S}_{\lambda}$ as a norm completion of the direct limit of Hecke-Specht modules $S_{(n-|\lambda|,\lambda)}$ as $n \rightarrow \infty.$ Using a combination of combinatorial and analytic methods, in Proposition \ref{prop inf specht are irred} we show that the representations $\hat{S}_{\lambda}$ are irreducible. Our main result, Theorem \ref{thm classification of irreps}, is as follows:

\begin{thm*}
    If $V$ is an irreducible almost-symmetric $\widehat{\cH}_{\infty}$-module, then for some $\lambda \in \Par$ there exists a bounded $\widehat{\cH}_{\infty}$-module embedding $ \hat{S}_{\lambda}\rightarrow  V$ with dense image.
\end{thm*}

The combinatorics involved in these computations are mostly standard but they are highly related to the notions of representation stability of Church--Ellenberg--Farb \cite{CEF15} and results of Murnaghan \cite{M_1938}. Analytically, however, due to a lack of some of the standard tools of functional analysis in the non-archimedean setting we need to use some arguments which may be non-standard to a combinatorialist and more combinatorial than an analyst might expect.

In the last section of the paper, we consider the problem of assigning trace functionals to the irreducible representations in the way one would do for finite Hecke algebra representations. Hecke algebra traces are related to chromatic quasi-symmetric functions, immanants, LLT polynomials, and Kazhdan--Lusztig theory \cite{Haiman} \cite{SW16} \cite{MSW}. Theorem \ref{thm unique inner product} shows that there is an essentially unique inner product (that is to say a non-degenerate continuous symmetric bilinear form) $(-,-)_{\lambda}$ on each of the spaces $\hat{S}_{\lambda}$ such that the generators $T_i$ of $\cH_{\infty}$ are self-adjoint. We then use the bilinear form $(-,-)_{\lambda}$ to define a linear functional $\Gamma_{\lambda}$ on $\cH_{\infty}$ which we call the regularized trace of $\hat{S}_{\lambda}.$ There are many such possible definitions for this functional so we make our definition to carefully align with the author's previous work \cite{BWMurnaghan} where highly related infinite series appeared in the context of stable-limits of the vector-valued DAHA representations of Dunkl--Luque \cite{DL_2011}. We prove in Theorem \ref{thm integrality} that the values of $\Gamma_{\lambda}$ at integral elements of $\cH_{\infty}$ are still integral. Lastly, we prove that the values $\Gamma_{\lambda}(1)$ are all rational expressions of $t$ (Theorem \ref{rationality thm}). Putting these results together imply that the values $\Gamma_{\lambda}(T_{\sigma})$ are always rational expressions in $t$ with norm $\leq 1.$ An arbitrary renormalization would not be expected to satisfy these special properties at least partially justifying our choice. These results suggest that there should be a well-defined Hecke algebra trace theory for the infinite Hecke algebra over a non-archimedean field given the appropriate choice of normalization.

\section{Background}

\subsection{Combinatorics}

Here we define the main combinatorial notions we require for this paper.

\begin{defn}
    For $n \geq 1$ let $\mathfrak{S}_n$ denote the \textbf{symmetric group} of permutations of the set $\{1,\dots,n\}.$ Let $s_1,\dots,s_{n-1}$ denote the simple transpositions $s_{i}:=(i~~i+1).$ Given $\sigma \in \mathfrak{S}_n$ define the length of $\sigma$, $\ell(\sigma)$, as the minimal number of $s_i$'s required to express $\sigma$ as a product of the form $\sigma = s_{i_1}\dots s_{i_{\ell}}.$ Equivalently, $\ell(\sigma)$ is the number of inversions of $\sigma$, i.e., pairs of numbers $1\leq i< j\leq n $ such that $\sigma(i) > \sigma(j).$ We say that $\sigma = s_{i_1}\dots s_{i_{\ell}}$ is a reduced expression if $\ell = \ell(\sigma).$ For all $n \geq 1$ we embed $\mathfrak{S}_n \subset \mathfrak{S}_{n+1}$ as the subgroup of all $\sigma \in \mathfrak{S}_{n+1}$ such that $\sigma(n+1) = n+1.$ Define the \textbf{infinite symmetric group} $\mathfrak{S}_{\infty}$ as the direct limit $\lim_{\rightarrow} \mathfrak{S}_n.$ Equivalently, $\mathfrak{S}_{\infty}$ is the group of all bijections $\sigma: \{1,2,\dots\} \rightarrow \{1,2,\ldots\}$ such that $\sigma(i) = i$ for all but finitely many $i\geq 1.$ In this paper, a composition is a tuple $\mu = (\mu_1,\dots,\mu_r)$ where $\mu_i \geq 1.$ The size of the composition is $|\mu|:= \mu_1+\dots+\mu_r$ and the length of the composition is $\ell(\mu):= r.$ An infinite composition refers to a tuple of the form $\gamma = (\mu_1,\cdots,\mu_r,\infty)$ where $\mu$ is a (finite) composition and $\infty$ is a formal symbol. Given a (finite) composition $\mu = (\mu_1,\dots,\mu_r)$ with size $|\mu| = n$ define the \textbf{Young subgroup} $\mathfrak{S}_{\mu}$ as the copy of $\mathfrak{S}_{\mu_1}\times \dots \times \mathfrak{S}_{\mu_r}$ embedded into $\mathfrak{S}_n$ such that each $\mathfrak{S}_{\mu_i}$ permutes the labels $\{\mu_1+\dots\mu_{i-1}+1,\dots,\mu_1+\dots\mu_{i-1}+\mu_i\}.$ Given an infinite composition $\gamma = (\mu_1,\dots,\mu_r,\infty)$ we consider the Young subgroup $\mathfrak{S}_{\gamma}$ as the copy of $\mathfrak{S}_{\mu}\times \mathfrak{S}_{\infty}$ where $\mathfrak{S}_{\mu}$ permutes the set $\{1,\dots,n\}$ and $\mathfrak{S}_{\infty}$ permutes the set $\{n+1,n+2,\dots\}.$
\end{defn}

\begin{defn}\label{defn partitions}
    A \textbf{partition} is a weakly decreasing sequence of positive integers $\lambda = (\lambda_1,\dots,\lambda_{\ell})$. Define $\Par$ as the set of all partitions. We refer to $\ell(\lambda):= \ell$ as the length of $\lambda$ and $|\lambda|:= \lambda_1+\dots + \lambda_{\ell}$ as the size of $\lambda.$ We identify every partition $\lambda$ with its corresponding \textbf{Young diagram} in French notation, i.e., the left-adjusted lattice diagram whose row lengths from top to bottom are $\lambda_1,\dots,\lambda_{\ell}.$ The \textbf{dominance ordering} on $\Par$ is given by $\mu \trianglelefteq \lambda$ if for all $\mu_1+\dots+\mu_i \leq \lambda_1+\dots + \lambda_i$ where we consider $\mu_i = 0$ if $i > \ell(\mu)$ and $\lambda_i = 0$ if $i > \ell(\lambda).$
\end{defn}

\begin{defn}\label{defn Young diagrams}
     A \textbf{standard Young tableau} of shape $\lambda \in \Par$ is a labeling $\tau: \lambda \rightarrow \{1,2,\dots\}$ of the boxes of $\lambda$ which is strictly increasing from left to right and strictly increasing from top to bottom and takes the values $\{1,\dots,|\lambda|\}.$ We write $\SYT(\lambda)$ for the set of all standard Young tableaux of shape $\lambda.$ Let $\tau_{\lambda}$ denote the \textbf{column-standard} Young tableau of shape $\lambda$ which is the unique element of $\SYT(\lambda)$ such that if $\square_1$ is directly below $\square_2$, then $\tau_{\lambda}(\square_1) = \tau_{\lambda}(\square_2) -1.$ If $\tau \in \SYT(\lambda)$ and $1\leq i \leq n-1$ such that swapping the labels $i,i+1$ results in a standard Young tableau, we write $s_i(\tau)\in \SYT(\lambda)$ for this resulting tableau. We give $\SYT(\lambda)$ the structure of a partially ordered set uniquely via the following rules:
    \begin{itemize}
        \item If we have some $\tau \in \SYT(\lambda)$ and $1\leq i \leq n-1$ such that the entry $i+1$ appears strictly to the right and above the entry $i$ in $\tau$, then we say $s_i(\tau) > \tau$ is a cover for the ordering $>.$
        \item The order $>$ on $\SYT(\lambda)$ is the transitive closure of the above cover relations. 
    \end{itemize}
\end{defn}

\begin{example}
    The following is a maximal length chain in $\SYT(2,2,1):$

    $$\begin{ytableau}
        1 & 4 \\
        2 &  5 \\
        3 & \none \\
        \end{ytableau} < \begin{ytableau}
        1 & 3 \\
        2 &  5 \\
        4 & \none \\
        \end{ytableau}< \begin{ytableau}
        1 & 2 \\
        3 &  5 \\
        4 & \none \\
        \end{ytableau} < \begin{ytableau}
        1 & 2 \\
        3 &  4 \\
        5 & \none \\
        \end{ytableau}.$$
    Note the column-standard tableau is the unique minimal element and the row-standard tableau is the unique maximal element of $\SYT(2,2,1)$.
\end{example}

\begin{defn}
    Let $\lambda \in \Par$ with size $|\lambda| = n$ and $\tau \in \SYT(\lambda).$ For all $\square \in \lambda$ define the \textbf{content} $c(\square):= j-i$ where $\square = (i,j).$ For $1\leq i \leq n$ define $c_{\tau}(i):= c(\square)$ where $\square \in \lambda$ is the unique square such that $\tau(\square) = i.$
\end{defn}

\begin{example}
    For $\tau = \begin{ytableau}
        1 & 3 \\
        2 &  5 \\
        4 & \none \\
        \end{ytableau} \in \SYT(2,2,1),$ the content values are $$(c_{\tau}(1),c_{\tau}(2),c_{\tau}(3),c_{\tau}(4),c_{\tau}(5)) = (0,-1,1,-2,0).$$  
\end{example}

\subsection{Finite Hecke Algebras}

For the remainder of the paper, fix a \textbf{non-archimedean field} $\mathbb{F}$ which is complete with respect to the non-trivial norm $|\cdot|$ and fix $t \in \mathbb{F}$ to be a \textbf{pseudo-uniformizer}, i.e., $0< |t| < 1.$ 

\begin{defn}\label{defn finite Hecke}
     Define $\mathcal{H}_n$ to be the \textbf{finite Hecke algebra} over $\mathbb{F}$ with generators $T_1,\ldots,T_{n-1}$ satisfying 
    \begin{itemize}
        \item $(T_i-1)(T_i+t) = 0$
        \item $T_iT_{i+1}T_i = T_{i+1}T_iT_{i+1}$
        \item $T_iT_j = T_jT_i$ for $|i - j| > 1.$
    \end{itemize}
    Given a composition $\mu = (\mu_1,\dots,\mu_r)$ of size $|\mu| = n$ define the \textbf{Young subalgebra} $\cH_{\mu}$ as the copy of $\cH_{\mu_1}\otimes \cdots \otimes \cH_{\mu_r}$ embedded in $\cH_{n}$ where each $\cH_{\mu_i}$ is generated by the elements $\{T_{\mu_1+\dots+\mu_{i-1}+1},\dots,T_{\mu_1+\dots+\mu_{i-1}+\mu_i -1}\}.$ Define the trivial representation $\mathrm{triv}(\mu)$ as the $1$-dimensional representation of $\cH_{\mu}$ determined by the algebra character $T_i \mapsto 1.$
\end{defn}

Over the complex numbers $\mathbb{C}$ the algebra $\mathcal{H}_n$ as defined above is \textbf{semisimple} so far as $t \neq 0$ is not a root of unity of sufficiently small order. In the non-archimedean setting, we refer to a more general semi-simplicity result of Andersen--Stroppel--Tubbenhauer. We are only using a special case of their result: particularly, no matter the characteristic of $\mathbb{F}$, the element $t$ is a unit but cannot be a root of unity.

\begin{thm}[Andersen--Stroppel--Tubbenhauer] \cite{AST17}\label{AST thm}
    The algebra $\mathcal{H}_n$ is semisimple over $\mathbb{F}.$
\end{thm}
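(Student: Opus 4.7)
The plan is to reduce the claim to the classical split-semisimplicity criterion for type A Hecke algebras, which requires only that $t \in \bF$ be a unit and not a nontrivial root of unity. Granting this criterion, the cited result of Andersen--Stroppel--Tubbenhauer is strictly more general (it handles broader classes of diagrammatic algebras in arbitrary characteristic), so the work in the present setting reduces to checking a hypothesis on $t$ that is essentially forced by the non-archimedean geometry.

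First I would dispatch the root-of-unity condition using only the ultrametric structure. If $\zeta \in \bF$ is a root of unity, then $|\zeta|^{k} = |\zeta^{k}| = |1| = 1$ for some $k \geq 1$, forcing $|\zeta| = 1$. Since by hypothesis $|t| < 1$, the element $t$ cannot be a root of unity of any order, and since $|t| > 0$ it is a unit. In particular, the quantum integers $[k]_{t} := 1 + t + \cdots + t^{k-1} = \frac{1 - t^{k}}{1 - t}$ are nonzero for every $k \geq 2$.

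Next I would invoke the criterion that $\cH_{n}$ is split semisimple over $\bF$ whenever $[2]_{t},\dots,[n]_{t}$ are all nonzero. The cleanest concrete route is Hoefsmit's seminormal construction: for each $\lambda \in \Par$ with $|\lambda| = n$, one defines an action of the $T_{i}$ on the $\bF$-vector space with basis $\SYT(\lambda)$ via matrices whose only denominators are quantum integers of the form $[c_{\tau}(i+1) - c_{\tau}(i)]_{t}$, all nonzero by the previous step. Irreducibility of each resulting module and pairwise non-isomorphism across distinct $\lambda$ follow because the Jucys--Murphy elements act diagonally with $\lambda$-dependent eigenvalue multisets; combined with the identity $\sum_{\lambda \in \Par,\, |\lambda| = n} \#\SYT(\lambda)^{2} = n! = \dim_{\bF} \cH_{n}$, this exhibits $\cH_{n}$ as a direct sum of matrix algebras. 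The only real obstacle to a self-contained proof lies in writing out the seminormal construction carefully; since the argument is entirely standard and is subsumed by \cite{AST17}, I would simply cite the latter.
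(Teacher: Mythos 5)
Your proposal matches the paper's own treatment: the paper likewise observes that $0 < |t| < 1$ forces $t$ to be a unit that is not a root of unity (so all quantum integers $[k]_t$ are nonzero) and then defers to the cited semisimplicity result of Andersen--Stroppel--Tubbenhauer. Your explicit ultrametric argument that roots of unity have norm $1$, and your sketch of the seminormal construction, are correct elaborations of the same route rather than a different proof.
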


Here we give an explicit construction of the \textbf{irreducible} representations of $\cH_n.$

\begin{defn}\label{defn JM elements}
We define the \textbf{Jucys-Murphy} elements $\theta_1,\ldots, \theta_n \in \cH_n$ by $\theta_1 := 1$ and $\theta_{i+1}:= tT_{i}^{-1}\theta_{i}T_{i}^{-1}$ for $1\leq i \leq n-1$. Further, define the \textbf{intertwiner} elements $\varphi_1,\ldots, \varphi_{n-1}$ by $\varphi_i: = (tT_{i}^{-1})\theta_i - \theta_{i}(tT_i^{-1}).$ For a permutation $\sigma \in \mathfrak{S}_n$ and a reduced expression $\sigma = s_{i_1}\cdots s_{i_r}$ we write $T_{\sigma} := T_{i_1}\cdots T_{i_r}.$
\end{defn}

\begin{remark}
    There are natural algebra inclusions $\cH_{n} \rightarrow \cH_{n+1}$ given by $T_i \rightarrow T_i$ for $1\leq i \leq n-1$. Under this embedding $\theta_i \rightarrow \theta_i$ for $1 \leq i \leq n$ so we can naturally identify the copies of $\theta_i$ in both $\cH_n$ and $\cH_{n+1}$.
\end{remark}

The following list of relations is straightforward to verify.

\begin{prop}\label{additional relations for finite hecke}
    The following relations hold:
    \begin{itemize}
        \item $\theta_i = t^{i-1}T_{i-1}^{-1}\cdots T_{1}^{-1}T_{1}^{-1}\cdots T_{i-1}^{-1}$ for $1\leq i \leq n$
        \item $\theta_i\theta_j = \theta_j\theta_i$ for $1\leq i,j \leq n$
        \item $T_i\theta_j = \theta_jT_i$ for $j \notin \{i,i+1\}$
        \item $\varphi_i = tT_{i}^{-1}(\theta_i - \theta_{i+1}) + (t-1)\theta_{i+1}$ for $1 \leq i \leq n-1$
        \item $\varphi_i\varphi_{i+1}\varphi_i=\varphi_{i+1}\varphi_i\varphi_{i+1}$ for $1\leq i \leq n-1$
        \item $\varphi_i\varphi_j = \varphi_j\varphi_i$ for $|i-j| >1$
        \item $\varphi_{i}\theta_j = \theta_{s_i(j)}\varphi_{i}$ for $1\leq i \leq n-1$ and $1 \leq j \leq n$
        \item $\varphi_i^{2} = (t\theta_i-\theta_{i+1})(t\theta_{i+1}-\theta_{i}).$
    \end{itemize}
\end{prop}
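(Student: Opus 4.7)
The plan is to verify the eight relations in sequence, with each drawing on the preceding ones; throughout, I work only with the defining relations of $\cH_n$ together with the recursive definitions of $\theta_i$ and $\varphi_i$.

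First I would prove (1) by a direct induction on $i$: the base case $\theta_1 = 1$ is trivial, and the inductive step is immediate from $\theta_{i+1} = tT_i^{-1}\theta_iT_i^{-1}$. An important corollary of (1) is that $\theta_i$ lies in the subalgebra generated by $T_1,\ldots,T_{i-1}$. The recursion also immediately yields the key conjugation identity $T_i\theta_{i+1}T_i = t\theta_i$. Combining this with the quadratic relation $T_i^2 = (1-t)T_i + t$, one derives the two \emph{straightening formulas}
\begin{equation*}
T_i\theta_i = (1-t)\theta_i + \theta_{i+1}T_i, \qquad \theta_iT_i = (1-t)\theta_i + T_i\theta_{i+1},
\end{equation*}
which I view as the workhorse of the whole proof. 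Subtracting them and using the quadratic identity $tT_i^{-1} = T_i + t - 1$ produces the compact form $\varphi_i = T_i\theta_i - \theta_iT_i$, from which relation (4) drops out by a one-line rewrite.

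Relation (3) follows by treating the two cases separately: for $j < i$, since $\theta_j$ lives in the subalgebra generated by $T_1,\ldots,T_{j-1}$, it commutes with $T_i$ by the commutation relation; for $j > i+1$, I induct on $j$ using $\theta_j = tT_{j-1}^{-1}\theta_{j-1}T_{j-1}^{-1}$ and $|j-1-i|\geq 2$. For (2), I first show via the straightening formulas that $T_i$ commutes with both $\theta_i + \theta_{i+1}$ and $\theta_i\theta_{i+1}$; commutativity of the full family $\{\theta_j\}$ then follows by induction on $|i-j|$, using (3) to slide past non-adjacent indices and the centrality in the $(i,i+1)$-pair to handle the adjacent case. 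Relation (7) is immediate for $j\notin\{i,i+1\}$ from (2) and (3), while the two nontrivial cases reduce to a short calculation with $\varphi_i = T_i\theta_i - \theta_iT_i$ and the straightening formulas. Relation (8) is a direct expansion of $(T_i\theta_i - \theta_iT_i)^2$ in which the straightening formulas and the quadratic relation repeatedly replace each $T_i\theta_i$ and $\theta_iT_i$ until both sides match.

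This leaves the braid relations (5) and (6). Relation (6) is easy: from (4), $\varphi_i$ is assembled from $T_i$, $\theta_i$, and $\theta_{i+1}$, and every such generator commutes with the corresponding ingredients of $\varphi_j$ when $|i-j|>1$, by the commutation relation for the $T$'s and by (3). The main obstacle is (5). I plan to handle it by writing each intertwiner in the commutator form $\varphi_i = \theta_{i+1}T_i - T_i\theta_{i+1}$ (equivalent to the previous one), expanding both sides of $\varphi_i\varphi_{i+1}\varphi_i = \varphi_{i+1}\varphi_i\varphi_{i+1}$, and systematically pushing each $T_k$ to the right using the straightening formulas together with the braid relation $T_iT_{i+1}T_i = T_{i+1}T_iT_{i+1}$ and its inverse form. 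The expansion produces many terms a priori, so the organizational difficulty, rather than any single algebraic step, is the real obstacle; it is manageable by keeping track of monomials in $(\theta_i,\theta_{i+1},\theta_{i+2})$ times a fixed reduced word in $T_i, T_{i+1}$, and checking equality coefficient-by-coefficient.
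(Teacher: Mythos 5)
Your scaffolding---formula (1) by induction, the conjugation identity $T_i\theta_{i+1}T_i = t\theta_i$, the two straightening formulas, the commutator form $\varphi_i = T_i\theta_i - \theta_iT_i = \theta_{i+1}T_i - T_i\theta_{i+1}$, and the deductions of (4), (6), (7), (8) from these---is sound and is exactly the kind of direct verification the paper intends (it offers no proof beyond ``straightforward to verify''). But there is a genuine gap at relation (2) in the adjacent case. You propose to ``first show via the straightening formulas that $T_i$ commutes with $\theta_i\theta_{i+1}$'' and then deduce $\theta_i\theta_{i+1}=\theta_{i+1}\theta_i$. The straightening formulas only give $T_i(\theta_i\theta_{i+1}) = (\theta_{i+1}\theta_i)T_i + (1-t)(\theta_i\theta_{i+1}-\theta_{i+1}\theta_i)$, equivalently $T_i^{-1}(\theta_i\theta_{i+1})T_i = \theta_{i+1}\theta_i$: the two products are \emph{conjugate} by $T_i$, and the asserted centrality is equivalent to the commutativity you are trying to prove, not a stepping stone toward it. Indeed the adjacent commutativity cannot follow from the local relations alone: in the algebra generated by one $T$ satisfying the quadratic relation and a free $\theta$, with $\theta' := tT^{-1}\theta T^{-1}$, one does not have $\theta\theta'=\theta'\theta$. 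Already $\theta_2\theta_3=\theta_3\theta_2$ reduces to $T_1^2T_2T_1^2T_2 = T_2T_1^2T_2T_1^2$, i.e.\ $(T_1T_2)^3=(T_2T_1)^3$, which requires the braid relation. The standard repair is global: show that $\theta_1\theta_2\cdots\theta_k$ (the image of the full twist, up to a power of $t$) is central in $\cH_k$ and write $\theta_k$ as a ratio of two such central elements, or verify $(V_iT_iV_i)T_i = T_i(V_iT_iV_i)$ with $V_i = T_{i-1}\cdots T_1T_1\cdots T_{i-1}$ by an explicit braid computation.

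A smaller but related omission occurs in relation (3). For $j>i+1$ your induction cites $|j-1-i|\ge 2$, but in the base case $j=i+2$ one has $|j-1-i|=1$, and the inductive input would be that $\theta_{i+1}$ commutes with $T_i$, which is false. This case needs its own braid-relation argument: write $\theta_{i+2}=t^2T_{i+1}^{-1}T_i^{-1}\theta_iT_i^{-1}T_{i+1}^{-1}$, use $T_iT_{i+1}^{-1}T_i^{-1}=T_{i+1}^{-1}T_i^{-1}T_{i+1}$, slide $T_{i+1}$ past $\theta_i$ (which lies in the subalgebra generated by $T_1,\dots,T_{i-1}$), and finish with $T_{i+1}T_i^{-1}T_{i+1}^{-1}=T_i^{-1}T_{i+1}^{-1}T_i$; the induction for $j\ge i+3$ then proceeds as you describe. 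Note also that your verifications of (7) for $j=i,i+1$ and of (8) invoke (2), so the corrected argument for (2) must come first. Everything else, including the admittedly laborious brute-force plan for (5), is workable.
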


The following definition gives a description of the irreducible representations of $\cH_n$. There are many equivalent methods for defining these representations but we choose to specify eigenvectors for the Jucys-Murphy elements $\theta_i$ directly as we require these eigenvectors throughout this paper.

\begin{defn}\label{irreps for finite hecke defn}
    Let $\lambda \in \Par$ with $|\lambda| = n$. Define the \textbf{Hecke-Specht module} $S_{\lambda}$ as the $\cH_n$-module spanned by $e_{\tau}$ for $\tau \in \SYT(\lambda)$ defined by the following relations:
    \begin{itemize}
        \item $\theta_i(e_{\tau}) = t^{c_{\tau}(i)}e_{\tau}$
        \item If $s_i(\tau) > \tau,$ then $\varphi_i(e_{\tau}) = (t^{c_{\tau}(i)}-t^{c_{\tau}(i+1)})e_{s_i(\tau)}.$
        \item If the labels $i,i+1$ are in the same row in $\tau$, then $T_i(e_{\tau}) = e_{\tau}.$
        \item If the labels $i,i+1$ are in the same column in $\tau$, then $T_i(e_{\tau}) = -te_{\tau}.$
    \end{itemize}
\end{defn}

\begin{remark}\label{remark explicit action}
    Using the relations from Proposition \ref{additional relations for finite hecke}, we find the following more explicit form for the action of the $T_i$ on the $\SYT(\lambda)$ basis:
\begin{itemize}
    \item If $s_i(\tau)> \tau,$ then 
    $$T_i(e_{\tau}) = e_{s_{i}(\tau)} + \frac{1-t}{1-t^{c_{\tau}(i+1)-c_{\tau}(i)}} e_{\tau}.$$
    \item If $s_i(\tau) < \tau,$ then 
    $$T_i(e_{\tau}) = t\frac{(1-t^{c_{\tau}(i) - c_{\tau}(i+1) -1})(1-t^{c_{\tau}(i)-c_{\tau}(i+1)+1})}{(1-t^{c_{\tau}(i)-c_{\tau}(i+1)})^2} e_{s_{i}(\tau)} - t^{c_{\tau}(i)-c_{\tau}(i+1)} \frac{1-t}{1-t^{c_{\tau}(i)-c_{\tau}(i+1)}}e_{\tau}.$$
    \item If the labels $i,i+1$ are in the same row in $\tau$, then $T_i(e_{\tau}) = e_{\tau}.$
        \item If the labels $i,i+1$ are in the same column in $\tau$, then $T_i(e_{\tau}) = -te_{\tau}.$
\end{itemize}
\end{remark}

\begin{example}
For the following examples, for the sake of notation, we conflate $\tau$ with $e_{\tau}:$
\begin{itemize}
    \item $$T_2~\ytableausetup
 {mathmode, boxframe=normal, boxsize=1em}\begin{ytableau}
    1 & 2 \\
    3 & \none \\
\end{ytableau} = \begin{ytableau}
    1 & 3 \\
    2 & \none \\
\end{ytableau}
+ \left(\frac{1-t}{1-t^2} \right) \begin{ytableau}
    1 & 2 \\
    3 & \none \\
\end{ytableau}$$

\item  $$T_4~\ytableausetup
 {mathmode, boxframe=normal, boxsize=1em}\begin{ytableau}
    1 & 2 & 5 \\
    3 & 6 & \none \\
    4 & \none & \none \\
\end{ytableau}
 = \begin{ytableau}
    1 & 2 & 4 \\
    3 & 6 & \none \\
    5 & \none & \none \\
\end{ytableau}
+ \left(\frac{1-t}{1-t^4} \right) \begin{ytableau}
    1 & 2 & 5 \\
    3 & 6 & \none \\
    4 & \none & \none \\
\end{ytableau}$$

\item $$T_1~\ytableausetup
 {mathmode, boxframe=normal, boxsize=1em}\begin{ytableau}
    1 & 2  \\
    3 & 4  \\
\end{ytableau}
 = \begin{ytableau}
    1 & 2  \\
    3 & 4  \\
\end{ytableau}$$

\item $$T_3~\ytableausetup
 {mathmode, boxframe=normal, boxsize=1em}\begin{ytableau}
    1 & 2  \\
    3 & 5  \\
    4 & \none  \\
\end{ytableau}
 = -t~\begin{ytableau}
    1 & 2  \\
    3 & 5  \\
    4 & \none  \\
\end{ytableau}$$

\end{itemize}

\end{example}

\begin{prop}\label{prop finite hecke irreps}
    Definition \ref{irreps for finite hecke defn} is well-posed, i.e., the action of the operators $T_i$ on $S_{\lambda}$ define an irreducible $\cH_n$-module.
\end{prop}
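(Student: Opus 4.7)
My plan has two parts: verify that the prescribed actions of $\theta_i$ and $T_i$ are mutually consistent (so that Definition~\ref{irreps for finite hecke defn} actually determines an $\cH_n$-action on a space with basis $\{e_\tau : \tau \in \SYT(\lambda)\}$), and then prove irreducibility. A basic input for both parts is the standard combinatorial fact that distinct $\tau,\tau'\in\SYT(\lambda)$ have distinct content sequences $(c_\tau(1),\ldots,c_\tau(n))$, which, combined with the hypothesis that $t$ is a pseudo-uniformizer (hence not a root of unity), ensures that the joint eigentuples $(t^{c_\tau(i)})_{i=1}^n$ of the commuting operators $\theta_1,\ldots,\theta_n$ are pairwise distinct. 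In particular, any subspace invariant under all $\theta_i$ is a direct sum of the lines $\mathbb{F}e_\tau$ that it meets.

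For consistency, I would first check that the prescribed $T_i$-action (as recorded in Remark~\ref{remark explicit action}) satisfies the quadratic relation $(T_i-1)(T_i+t)=0$ and the far commutativity $T_iT_j=T_jT_i$ for $|i-j|>1$. The quadratic relation is a case-by-case check: when $i,i+1$ sit in the same row or column, $T_i$ acts by a scalar whose minimal polynomial divides $(x-1)(x+t)$; otherwise $T_i$ preserves the $2$-dimensional space $\mathbb{F}e_\tau+\mathbb{F}e_{s_i(\tau)}$, and the formulas in Remark~\ref{remark explicit action} yield a $2\times 2$ matrix with eigenvalues $1$ and $-t$. Far commutativity is immediate since for $|i-j|>1$ the operators $T_i$ and $T_j$ act on disjoint pairs of labels of $\tau$. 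For the braid relation $T_iT_{i+1}T_i=T_{i+1}T_iT_{i+1}$ it is cleanest to work with the intertwiners: I would verify by direct computation that the prescribed action of $\varphi_i = tT_i^{-1}(\theta_i-\theta_{i+1})+(t-1)\theta_{i+1}$ satisfies the braid relation and the quadratic identity $\varphi_i^2=(t\theta_i-\theta_{i+1})(t\theta_{i+1}-\theta_i)$ of Proposition~\ref{additional relations for finite hecke}; combined with the already-established relations for $T_i$, this gives the braid identity for the $T_i$.

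For irreducibility, let $0\neq V\subseteq S_\lambda$ be an $\cH_n$-submodule. By the eigenvalue-separation observation above, $V$ is spanned by the $e_\tau$ it contains; pick such a $\tau$. If $s_i(\tau)>\tau$ is a cover relation, then Definition~\ref{irreps for finite hecke defn} gives $\varphi_i(e_\tau)=(t^{c_\tau(i)}-t^{c_\tau(i+1)})e_{s_i(\tau)}$, and validity of the swap forces $c_\tau(i)\neq c_\tau(i+1)$, so the coefficient is nonzero and $e_{s_i(\tau)}\in V$. Conversely, applying $\varphi_i$ to $e_{s_i(\tau)}$ and using $\varphi_i^2=(t\theta_i-\theta_{i+1})(t\theta_{i+1}-\theta_i)$, the scalar $(t\cdot t^{c_\tau(i)}-t^{c_\tau(i+1)})(t\cdot t^{c_\tau(i+1)}-t^{c_\tau(i)})$ is nonzero under the same condition on $(c_\tau(i),c_\tau(i+1))$, so $\varphi_i(e_{s_i(\tau)})$ is a nonzero multiple of $e_\tau$. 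Hence every cover relation is reversible inside $V$, and since the Hasse diagram of $\SYT(\lambda)$ is connected, $V$ contains every $e_{\tau'}$, giving $V=S_\lambda$.

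The main obstacle is the braid-relation verification. Although passing to the intertwiners reduces this to a more symmetric calculation, one still must handle each configuration of the three labels $\{i,i+1,i+2\}$ inside $\tau$: the generic case (no two of them in the same row or column) involves a $3$-dimensional invariant subspace and requires a careful identity in the three content parameters $t^{c_\tau(i)}, t^{c_\tau(i+1)}, t^{c_\tau(i+2)}$. The rest of the argument is comparatively routine combinatorics given the eigenvalue separation.
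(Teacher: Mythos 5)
Your argument is correct and is precisely the standard seminormal-form verification that the paper explicitly leaves to the reader (``This is a standard result which we leave to the reader''), so there is nothing in the paper to compare it against. The only step worth making explicit in your braid-relation reduction is the passage from the braid identity for the $\varphi_i$ back to the braid identity for the $T_i$: this requires solving $\varphi_i = tT_i^{-1}(\theta_i-\theta_{i+1})+(t-1)\theta_{i+1}$ for $T_i$, which is legitimate because $\theta_i-\theta_{i+1}$ acts invertibly on $S_\lambda$ (adjacent entries of a standard tableau never lie on the same diagonal, so $c_\tau(i)\neq c_\tau(i+1)$, and $t$ is a pseudo-uniformizer, hence not a root of unity).
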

\begin{proof}
    This is a standard result which we leave to the reader.
\end{proof}

\begin{remark}\label{remark branching}
  The set $\{ \lambda \in \Par: |\lambda| = n\}$ gives a full set of irreducible $\cH_n$-modules up to isomorphism. Note that for $\tau,\tau' \in \SYT(\lambda)$, the $\theta$-weights of $e_{\tau}$ and $ e_{\tau'}$ are equal if and only if $\tau = \tau'.$ The algebras $\cH_{n-1} \subset \cH_{n}$ satisfy the typical branching rules for symmetric group Specht-modules, namely:
  $$\mathrm{Res}_{\cH_{n-1}}^{\cH_{n}} S_{\lambda} = \bigoplus_{\mu\subset \lambda} S_{\mu}$$ where $|\lambda| = n$ and $\mu\subset \lambda$ range over all of those partitions with size $|\mu| = n-1$ contained in $\lambda.$
\end{remark}

We require the following special elements of the finite Hecke algebras.

\begin{defn}\label{defn partial trivial idem}
    For $1 \leq k \leq n$ define the \textbf{partially-trivial idempotent} $\epsilon_{k}^{(n)} \in \cH_n$ as 
    $$\epsilon_{k}^{(n)}:= \frac{1}{[n-k]_t!} \sum_{\sigma \in \mathfrak{S}_{1^k,n-k}} t^{\binom{n-k}{2} -\ell(\sigma)} T_{\sigma}.$$
\end{defn}

\begin{lem}\label{lem idem relations}
    For all $0 \leq k, \ell \leq n$, $\epsilon_{k}^{(n)}\epsilon_{\ell}^{(n)} = \epsilon_{\min\{k,\ell\}}^{(n)}.$ For $0 \leq k \leq n$, $1 \leq i \leq k-1$, and $k+1 \leq j \leq n-1,$ $T_i \epsilon_{k}^{(n)} = \epsilon_{k}^{(n)} T_i$ and $T_j \epsilon_{k}^{(n)} = \epsilon_{k}^{(n)} T_j = \epsilon_{k}^{(n)}.$
\end{lem}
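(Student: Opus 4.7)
The plan is to deduce all three assertions from the single key fact that $\epsilon_k^{(n)}$ is the trivial idempotent of the Hecke subalgebra generated by $T_{k+1},\dots,T_{n-1}$, i.e.\ of the nontrivial factor of the Young subalgebra $\cH_{1^k,n-k}$.

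First, the commutation $T_i\epsilon_k^{(n)} = \epsilon_k^{(n)}T_i$ for $1 \leq i \leq k-1$ is immediate from the braid relations of Definition \ref{defn finite Hecke}: every $T_\sigma$ in the expansion of $\epsilon_k^{(n)}$ is a product of the generators $T_{k+1},\dots,T_{n-1}$, and $|i-j| \geq 2$ forces $T_iT_j = T_jT_i$, so $T_i$ commutes with each summand.

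The main step is the absorption identity $T_j\epsilon_k^{(n)} = \epsilon_k^{(n)} = \epsilon_k^{(n)}T_j$ for $k+1 \leq j \leq n-1$. I would partition $\mathfrak{S}_{1^k,n-k}$ into length-increasing pairs $\{\sigma, s_j\sigma\}$ with $\ell(s_j\sigma) = \ell(\sigma)+1$. The quadratic relation $T_j^2 = (1-t)T_j+t$ gives $T_jT_\sigma = T_{s_j\sigma}$ and $T_jT_{s_j\sigma} = (1-t)T_{s_j\sigma}+tT_\sigma$. Writing $a = t^{\binom{n-k}{2}-\ell(\sigma)}$, the pair of terms in $[n-k]_t!\cdot\epsilon_k^{(n)}$ indexed by $\{\sigma,s_j\sigma\}$ is $aT_\sigma + (a/t)T_{s_j\sigma}$, and a one-line calculation shows that $T_j$ maps this expression to itself. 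Summing over all pairs gives $T_j\epsilon_k^{(n)} = \epsilon_k^{(n)}$, and the symmetric argument handles right multiplication.

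Finally, the product formula $\epsilon_k^{(n)}\epsilon_\ell^{(n)} = \epsilon_{\min\{k,\ell\}}^{(n)}$ is a bootstrap from the absorption. Assume without loss of generality $k \leq \ell$; then $\mathfrak{S}_{1^\ell,n-\ell} \subseteq \mathfrak{S}_{1^k,n-k}$, and each $T_\sigma$ with $\sigma \in \mathfrak{S}_{1^\ell,n-\ell}$ is a product of $T_j$ with $j \geq \ell+1 \geq k+1$, so iterating the absorption identity yields $T_\sigma\epsilon_k^{(n)} = \epsilon_k^{(n)}$. Therefore
$$\epsilon_\ell^{(n)}\epsilon_k^{(n)} \; = \; \frac{\epsilon_k^{(n)}}{[n-\ell]_t!}\sum_{\sigma\in\mathfrak{S}_{1^\ell,n-\ell}} t^{\binom{n-\ell}{2}-\ell(\sigma)}.$$
The scalar equals $1$ via the substitution $\sigma\mapsto\sigma w_0$, where $w_0$ is the longest element of $\mathfrak{S}_{1^\ell,n-\ell}$ with $\ell(\sigma w_0)=\binom{n-\ell}{2}-\ell(\sigma)$, identifying the sum with $\sum_\sigma t^{\ell(\sigma)} = [n-\ell]_t!$. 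The other order is handled symmetrically using the right-sided absorption. The only genuine obstacle is the pair-cancellation bookkeeping in the absorption step; once the length-increasing pair convention is fixed, the rest is routine.
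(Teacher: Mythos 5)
Your proof is correct and is exactly the direct verification that the paper leaves to the reader (its proof reads only ``straightforward to check directly''): the pairing $\{\sigma, s_j\sigma\}$ with coefficients $a$ and $a/t$ is indeed fixed by $T_j$ under the quadratic relation $T_j^2=(1-t)T_j+t$, and the scalar in the product step collapses to $1$ via the Poincar\'e polynomial identity $\sum_{\sigma}t^{\ell(\sigma)}=[n-\ell]_t!$. Nothing to add.
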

\begin{proof}
    These relations are straightforward to check directly.
\end{proof}

\subsection{Non-archimedean Analysis}

For an overview of the basics of non-archimedean functional analysis we refer the reader to \cite{Schneider}.

\begin{defn}
    A \textbf{Banach space} over $\bF$ is a pair of a vector space $V$ over $\bF$ and a norm $||\cdot||$ such that $V$ is complete with respect to the metric induced by $||\cdot||$ and $||\cdot||$ satisfies the \textbf{ultra-metric} condition, i.e., for all $v,w \in V,$ $||v+w || \leq \max\{ ||v||,||w||\}.$
\end{defn}

We often label Banach spaces $(V,||\cdot||)$ simply by their vector space $V$ when the norm $||\cdot||$ is implicit. Note that the field $\bF$ itself is a Banach space whose norm is $|\cdot|.$ 

\begin{defn}
    Let $S$ be a countable set. Define $\ell_{\infty}(S)$ as the space of all functions $f: S \rightarrow \bF$ such that $||f||:= \sup_{j \in S} |f(j)| < \infty.$ Define $c_0(S)$ as the subspace of $f\in \ell_{\infty}(S)$ such that for all $\epsilon > 0$ there exist only finitely many $j\in S$ such that $|f(j)|\geq \epsilon.$
\end{defn}

Often, we instead write elements $f\in \ell_{\infty}(S)$ as $f \equiv \sum_{j \in S} f(j) e_{j}$ where $\{e_j\}_{j \in S}$ are indicator functions for their specified elements. For $\ell_{\infty}(S)$ when $S$ is infinite this is a formal sum but for $c_{0}(S)$ this is a genuine sum in the sense that for $f\in c_0(S),$ if $S = \{j_1,j_2,\dots\}$ is an enumeration of $S$, then $ f = \lim_{n \rightarrow \infty} \sum_{1\leq i \leq n} f(j_i) e_{j_i}.$ In other words, the indicator functions $\{e_{j}|j \in S\}$ have dense span in $c_0(S).$ This fact will be important throughout the paper.

\begin{defn}
    Let $(V_1,||\cdot||_1),(V_2,||\cdot||_2)$ be a Banach spaces over $\bF.$ A linear map $A: V_1 \rightarrow V_2$ is bounded if there exists some $C \in \mathbb{R}_{\geq 0}$ such that for all $v \in V_1,$ 
    $||A(v)||_{2} \leq C ||v||_1.$ The norm of a bounded operator $A$ is defined by 
    $||A||_{V_1,V_2}:= \sup_{v \in V_1\setminus \{0\}} \frac{||A(v)||_2}{||v||_1}$. We write $\cB(V_1,V_2)$ for the Banach space of all such bounded linear maps with norm $||\cdot||_{V_1,V_2}$. For any Banach space $(V,||\cdot||)$ we write $\cB(V):= \cB(V,V).$ 
\end{defn}

Note that all bounded linear maps between Banach spaces are topologically continuous. 

\begin{defn}
    Let $\mathcal{A}$ be an algebra over $\bF.$ A representation of $\mathcal{A}$ is an algebraic representation of $\mathcal{A}$ on a Banach space $V$ such that for all $x \in \mathcal{A}$ the action map $v \mapsto x(v)$ is bounded. We say that the representation $V$ is irreducible if the only closed $\mathcal{A}$-invariant subspaces of $V$ are $0$ and $V.$ A homomorphism $f: V_1\rightarrow V_2$ of $\mathcal{A}$-representations is a bounded linear map which is $\mathcal{A}$-equivariant.
\end{defn}

Later in the paper, we need to discuss inner products on Banach spaces over $\bF.$ Since we are working over a non-archimedean field the concept of inner products much be approached carefully.

\begin{defn}\label{defn inner products}
    Let $V$ be a Banach space over $\bF.$ An \textbf{inner product} on $V$ is a non-degenerate continuous symmetric bilinear form $\langle-,-\rangle: V \times V\rightarrow \bF$. If $A\in \cB(V)$, then we say $A$ is \textbf{adjoint-able} if there exists $A^{*} \in \cB(V)$ such that for all $v,w \in V,$ $\langle A(v),w \rangle = \langle v, A^{*}(w) \rangle.$
\end{defn}

\begin{lem}\label{lem uniqueness of adjoints}
    If $A \in \cB(V)$ is adjoint-able with respect to an inner product $\langle-,-\rangle$, then it has a unique adjoint $A^* \in \cB(V)$.
\end{lem}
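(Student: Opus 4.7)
The plan is to argue directly from non-degeneracy of the inner product. Suppose $B_1, B_2 \in \cB(V)$ both serve as adjoints for $A$. Then for every $v, w \in V$ one has
\[
\langle v, B_1(w)\rangle = \langle A(v), w\rangle = \langle v, B_2(w)\rangle,
\]
so by bilinearity $\langle v, B_1(w) - B_2(w)\rangle = 0$ for all $v \in V$.

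Next I would invoke non-degeneracy: by Definition \ref{defn inner products} the form $\langle-,-\rangle$ is non-degenerate, which (via symmetry) means that any $u \in V$ satisfying $\langle v, u\rangle = 0$ for all $v \in V$ must be zero. Applying this with $u = B_1(w) - B_2(w)$ for each fixed $w$ yields $B_1(w) = B_2(w)$, hence $B_1 = B_2$ as elements of $\cB(V)$.

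There is essentially no obstacle here; the only subtle point worth noting is that the conclusion relies on the symmetric notion of non-degeneracy built into Definition \ref{defn inner products}, and not on any topological completeness or open mapping type argument. Continuity of the candidate adjoints plays no role in uniqueness — it is guaranteed by the hypothesis $A^* \in \cB(V)$ but is not needed for the argument above.
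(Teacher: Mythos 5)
Your argument is correct and is essentially identical to the paper's own proof: both compare two candidate adjoints, use bilinearity to show $\langle v, B_1(w)-B_2(w)\rangle = 0$ for all $v,w$, and then conclude from non-degeneracy of the form. Your closing remark that continuity plays no role in the uniqueness argument is accurate and consistent with the paper's treatment.
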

\begin{proof}
    Suppose $(A^*)_1,(A^{*})_2\in \cB(V)$ both satisfy for all $v,w \in V$ that $\langle A(v),w \rangle = \langle v,(A^*)_i(w) \rangle$ for $i=1,2.$ Then for all $v,w \in V,$
    $$\langle v,\left( (A^*)_1 - (A^*)_2 \right)(w) \rangle = \langle v, (A^*)_1(w) \rangle - \langle v, (A^*)_2(w) \rangle = \langle A(v),w \rangle- \langle A(v),w \rangle =0.$$ Since $\langle - , - \rangle$ is non-degenerate $(A^*)_1(w) = (A^*)_2(w)$ for all $w \in V$ and so $(A^*)_1 = (A^*)_2.$
\end{proof}

\begin{defn}
    If $A \in \cB(V)$ is adjoint-able with respect to an inner product $\langle-,-\rangle$, then we say $A$ is \textbf{self-adjoint} if $A^* = A$ and \textbf{skew-adjoint} if $A^* = -A.$
\end{defn}

\begin{lem}\label{lem adjoint properties}
    Let $\langle-,-\rangle$ be an inner product on the Banach space $V$ over $\bF$ , let $A,B \in \cB(V)$ be adjoint-able operators, and let $\alpha \in \bF$. The following hold:
    \begin{itemize}
        \item $(A + \alpha B)^* = A^* + \alpha B^*$
        \item $(AB)^* = B^* A^*$
        \item If $A,B$ are self-adjoint, then $AB-BA$ is skew adjoint.
    \end{itemize}
\end{lem}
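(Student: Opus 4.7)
The plan is to derive each claim directly from the defining identity $\langle A(v),w\rangle = \langle v, A^*(w)\rangle$, the symmetry and bilinearity of $\langle -,-\rangle$, and the uniqueness of adjoints established in Lemma \ref{lem uniqueness of adjoints}. In each case the strategy is: compute $\langle C(v),w\rangle$ for the operator $C$ in question, rewrite it using the adjoint properties of the constituent operators until it lands in the form $\langle v, D(w)\rangle$ for some explicit bounded $D$, and then conclude $C^* = D$ by uniqueness.

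For the first claim, I would fix $v,w \in V$ and expand
\[
\langle (A+\alpha B)(v), w\rangle = \langle A(v),w\rangle + \alpha \langle B(v), w\rangle = \langle v, A^*(w)\rangle + \alpha \langle v, B^*(w)\rangle = \langle v, (A^* + \alpha B^*)(w)\rangle,
\]
where bilinearity of the pairing is used twice. Boundedness of $A^* + \alpha B^*$ follows because $\cB(V)$ is a Banach space, hence a vector space under $+$ and scalar multiplication. Uniqueness of adjoints then gives $(A+\alpha B)^* = A^* + \alpha B^*$. For the second claim I would similarly compute
\[
\langle (AB)(v), w\rangle = \langle A(B(v)), w\rangle = \langle B(v), A^*(w)\rangle = \langle v, B^*(A^*(w))\rangle = \langle v, (B^*A^*)(w)\rangle,
\]
and note that $B^*A^* \in \cB(V)$ since the composition of bounded operators is bounded; uniqueness of adjoints then forces $(AB)^* = B^*A^*$.

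For the third claim, I would simply combine the first two. Assuming $A^* = A$ and $B^* = B$, parts one and two give
\[
(AB - BA)^* = (AB)^* - (BA)^* = B^*A^* - A^*B^* = BA - AB = -(AB - BA),
\]
so $AB - BA$ is skew-adjoint as defined. The main obstacle here is essentially nonexistent: the argument is the standard one from Hilbert space theory, and the only subtlety specific to the non-archimedean setting is ensuring that the candidate adjoints $A^* + \alpha B^*$ and $B^*A^*$ are bounded, which is immediate because $\cB(V)$ is closed under sums, scalar multiples, and compositions. Consequently the proof reduces to three short applications of Lemma \ref{lem uniqueness of adjoints}.
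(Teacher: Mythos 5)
Your proposal is correct and is exactly the ``little algebra'' plus uniqueness-of-adjoints argument that the paper's one-line proof alludes to; you have simply written out the details the paper omits. No further comment is needed.
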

\begin{proof}
    This is immediate from the uniqueness of adjoint operators in Lemma \ref{lem uniqueness of adjoints} and a little algebra.
\end{proof}

\subsection{Non-archimedean Infinite Hecke Algebra}

Our primary object of study is the infinite Hecke algebra in type $A$ and a certain idempotent-completion of this algebra.

\begin{defn}\label{defn inf hecke}
    Define the \textbf{infinite Hecke algebra} $\cH_{\infty}$ as the $\bF$-algebra generated by $T_1,T_2,\dots$ subject to the relations: 
    \begin{itemize}
        \item for all $i\geq 1$, $(T_i-1)(T_i+t) = 0,$
        \item for all $i,j \geq 1$ with $j \notin \{i,i+1\},$ $T_iT_j = T_jT_i,$
        \item for all $i \geq 1,$ $T_iT_{i+1}T_i = T_{i+1}T_iT_{i+1}.$
    \end{itemize}

    For an infinite composition $(\mu,\infty)$ define $\cH_{\mu,\infty}$ as the copy of $\cH_{\mu} \otimes \cH_{\infty} \subseteq \cH_{\infty}$ where $\cH_{\mu} \subset \cH_{|\mu|}$ and $\cH_{\infty}$ is generated by $T_{|\mu|+1},T_{|\mu|+2},\dots .$ We write $\mathrm{triv}(\mu,\infty)$ for the $1$-dimensional representation of $\cH_{\mu,\infty}$ determined by the algebra character $T_i \mapsto 1$ for all relevant $i\geq 1.$

    Define the \textbf{extended infinite Hecke algebra} $\widehat{\cH}_{\infty}$ by adjoining elements $\epsilon_0,\epsilon_1,\epsilon_2,\dots$ to $\widehat{\cH}_{\infty}$ subject to the relations:
    \begin{itemize}
        \item for all $k \geq 0,$ $\epsilon_k^2 = \epsilon_k,$
        \item for all $k,\ell \geq 0,$ $\epsilon_{k}\epsilon_{\ell} = \epsilon_{\min\{k,\ell \} },$
        \item for all $k \geq 0,$ $1 \leq i \leq k-1$, and $k+1 \leq j$, $\epsilon_{k}T_i = T_i \epsilon_k$ and $\epsilon_k T_j = T_j \epsilon_{k} = \epsilon_k.$
    \end{itemize}
\end{defn}

\begin{remark}
    The addition of the idempotent elements $\epsilon_k$ to $\cH_{\infty}$ is somewhat analogous to a \textbf{von Neumann algebra} completion of $\cH_{\infty}.$ If one had access to the usual tools in the analysis of $C^{*}$-algebras over $\mathbb{C}$, one would instead find a faithful representation of $\cH_{\infty}$ on a Hilbert space $V$ and study the weak closure of $\cH_{\infty}$ in the algebra of bounded operators on $V$. This would contain orthogonal projections like the $\epsilon_k$'s. In fact, one may try to do this in our non-archimedean situation. However, there are some technical issues which get in the way. The primary issue is that, as far as this author is aware, there is no suitable analogue of the von Neumann \textbf{double commutant theorem} for non-archimedean operator algebras nor is there a standard notion of weak operator topology for non-archimedean Banach algebras. That is not to say that no such theory could exist but rather that it seems to be a gap in the literature. Instead of this purely analytic approach, we have opted in this paper to use a mix of combinatorial and analytic arguments to get over these technical constraints.
\end{remark}

In this paper, we focus on a special well-behaved class of representations for the extended infinite Hecke algebra $\widehat{\cH}_{\infty}.$

\begin{defn}\label{defn almost-symm}
    An \textbf{almost-symmetric representation} of $\widehat{\cH}_{\infty}$ is a Banach space $V$ over $\bF$ along with an algebra homomorphism $\rho: \widehat{\cH}_{\infty} \rightarrow \cB(V)$ such that 
    \begin{itemize}
        \item For all $i \geq 1,k \geq 0$, $||\rho(T_i)|| \leq 1$ and $||\rho(\epsilon_k)|| \leq 1.$
        \item There exists some $k \geq 0$ such that $\rho(\epsilon_k) \neq 0.$
    \end{itemize}
\end{defn}

We often drop the homomorphism $\rho$ notation when context is clear.

\begin{example}
    The simplest example of an almost-symmetric representation of $\widehat{\cH}_{\infty}$ is the trivial representation $\hat{S}_{\emptyset}$. This is the $1$-dimensional space $\bF.v$ with action given by $T_i(v):= v$ and $\epsilon_{k}(v):= v$ for all $i \geq 1$ and $k \geq 0.$ Clearly, $||T_i|| = ||\epsilon_k|| =1$ for all $i \geq 1$ and $k \geq 0.$ The simplest non-example is the \textbf{sign} representation which is the $1$-dimensional space $\bF.w$ with action given by $T_i(w):= -tw$ and $\epsilon_{k}(w) := 0$ for all $i \geq 1$ and $k \geq 0.$ Clearly, $||T_i|| = |t| < 1$ and $\epsilon_k =0$ for all $i \geq 1$ and $k \geq 0.$ Therefore, the sign representation is not almost-symmetric. 
\end{example}

\section{Irreducible Almost-symmetric Representations}

\subsection{Construction of Irreducible Representations}

Here we construct special representations $\hat{S}_{\lambda}$ of $\widehat{\cH}_{\infty}$ indexed by partitions $\lambda \in \Par$. We then show that representations are in fact irreducible. First, we construct certain representations $S_{\lambda^{(\infty)}}$ of $\cH_{\infty}$ whose closures with respect to certain norms yield the representations $\hat{S}_{\lambda}.$

\begin{defn} \label{defn inf specht module}
    For $\lambda \in \Par$ define $n_{\lambda}:= |\lambda| + \lambda_1$. For $n \geq n_{\lambda}$ define $\lambda^{(n)}:= (n-|\lambda|,\lambda).$ Define $\lambda^{(\infty)}$ as the infinite Young diagram $\lambda^{(\infty)}:= \cup_{n \geq n_{\lambda}} \lambda^{(n)}.$ Write write $\SYT_{\infty}(\lambda)$ for the set of bijective fillings $\tau: \lambda^{(\infty)} \rightarrow \{1,2,3,\dots\}$ which are strictly increasing down columns and left to right across rows. We give $\SYT_{\infty}(\lambda)$ the poset structure determined by the poset structures on each $\SYT(\lambda^{(n)}).$ Define the $\cH_{\infty}$-module $S_{\lambda^{(\infty)}}:=\bigcup_{n \geq n_{\lambda}} S_{\lambda^{(n)}}$ where for all $n \geq n_{\lambda}, m\geq 0$, and $\tau \in \SYT(\lambda^{(n)})$ we identify $e_{\tau} \in S_{\lambda^{(n)}}$ with the vector $e_{\tau'} \in S_{\lambda^{(n+m)}}$ for $\tau' \in \SYT(\lambda^{(n+m)})$ the unique filling such that $\tau'|_{\lambda^{(n)}} = \tau.$
\end{defn}

\begin{example}
    Consider $\tau = \begin{ytableau}
    1 & 2 & 4 \\
    3 & 6 & \none \\
    5 & \none & \none \\
\end{ytableau} \in \SYT(3,2,1).$ We can think of $(3,2,1)$ instead as $\lambda^{(6)}$ where $\lambda =(2,1).$ To obtain an element of $\SYT_{\infty}(2,1)$ we identify $\tau$ with the following elements of $\SYT(\lambda^{(7)}),\SYT(\lambda^{(8)}),\dots:$

$$\begin{ytableau}
    1 & 2 & 4 \\
    3 & 6 & \none \\
    5 & \none & \none \\
\end{ytableau} ~\equiv~ \begin{ytableau}
    1 & 2 & 4 & 7\\
    3 & 6 & \none &\none \\
    5 & \none & \none & \none \\
\end{ytableau} ~\equiv~ \begin{ytableau}
    1 & 2 & 4 & 7 & 8 \\
    3 & 6 & \none &\none &\none \\
    5 & \none & \none & \none & \none\\
\end{ytableau} ~ \equiv ~ \begin{ytableau}
    1 & 2 & 4 & 7 & 8 & 9  \\
    3 & 6 & \none &\none &\none & \none \\
    5 & \none & \none & \none & \none & \none \\
\end{ytableau}\equiv \dots .$$
\end{example}

The dimensions of the partially-trivial subspaces of the representations $S_{\lambda^{(n)}}$ satisfy useful stability properties as $n$ increases.

\begin{lem}\label{lem stable partial invar dims}
    Let $\lambda, \mu \in \Par$, $r \geq n_{\lambda}$, and $m \geq \max\{ n_{\mu}, |\mu| + \lambda_1\}.$ Then 
    $$\dim \mathrm{Hom}_{\cH_{1^r,m}}\left( \mathrm{triv}(1^r) \otimes S_{\mu^{(m)}}, S_{\lambda^{(r+m)}} \right) = \begin{cases}
       \binom{r}{|\lambda/\mu|} |\SYT(\lambda/\mu)| & \mu \subseteq \lambda\\
        0 & \text{otherwise}.
    \end{cases} $$
    In particular, for $\mu = \emptyset$ and $m \geq \lambda_1$, $\dim \mathrm{Hom}_{\cH_{1^r,m}}\left( \mathrm{triv}(1^r,m), S_{\lambda^{(r+m)}} \right) = \binom{r}{ |\lambda|} |\SYT(\lambda)|.$
\end{lem}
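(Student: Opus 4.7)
The plan is first to reduce to a multiplicity computation. Since $\cH_{1^r}$ has no generators (it equals $\bF$), the subalgebra $\cH_{1^r,m}$ is just the copy of $\cH_m$ inside $\cH_{r+m}$ generated by $T_{r+1},\dots,T_{r+m-1}$, and $\mathrm{triv}(1^r)\otimes S_{\mu^{(m)}}$ becomes $S_{\mu^{(m)}}$ as an $\cH_m$-module. By semisimplicity (Theorem \ref{AST thm}), the desired Hom dimension equals the multiplicity
$$\bigl[\mathrm{Res}^{\cH_{r+m}}_{\cH_m} S_{\lambda^{(r+m)}} : S_{\mu^{(m)}}\bigr].$$

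Second, I would establish the iterated branching decomposition
$$\mathrm{Res}^{\cH_{r+m}}_{\cH_m} S_{\lambda^{(r+m)}} \;\cong\; \bigoplus_{\substack{\beta\,\vdash\, m \\ \beta\subseteq\lambda^{(r+m)}}} |\SYT(\lambda^{(r+m)}/\beta)|\cdot S_\beta.$$
Remark \ref{remark branching} gives this only for the ``left'' subalgebra $\langle T_1,\dots,T_{n-2}\rangle\subset\cH_n$, not the ``right'' subalgebra in question. To bridge the gap, I would use the algebra automorphism $\omega:\cH_{r+m}\to\cH_{r+m}$, $T_i\mapsto T_{r+m-i}$. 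The quadratic, braid, and commutation relations are immediately preserved, and a direct computation using $T_{w_0}=T_{n-i}T_{s_{n-i}w_0}=T_{w_0 s_i}T_i$ (together with $s_{n-i}w_0=w_0 s_i$) shows that $T_{w_0} T_i T_{w_0}^{-1} = T_{n-i}$, so $\omega$ is inner (conjugation by $T_{w_0}$). Hence every $S_\lambda$ is preserved up to isomorphism by $\omega$. Since $\omega$ carries our right $\cH_m$ onto the standard left $\cH_m$ generated by $T_1,\dots,T_{m-1}$, iterating Remark \ref{remark branching} $r$ times yields the displayed decomposition.

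Finally, I would specialize $\beta=\mu^{(m)}$ and count skew tableaux. The containment $\mu^{(m)}\subseteq\lambda^{(r+m)}$ is equivalent to $\mu\subseteq\lambda$ together with $|\lambda|-|\mu|\leq r$, and the latter is automatic from $r\geq n_\lambda$. If $\mu\not\subseteq\lambda$ the multiplicity is $0$. When $\mu\subseteq\lambda$, the skew shape $\lambda^{(r+m)}/\mu^{(m)}$ consists of a horizontal strip of $r-|\lambda/\mu|$ cells in row $1$ (in columns $m-|\mu|+1,\dots,r+m-|\lambda|$) together with the skew shape $\lambda/\mu$ placed in rows $\geq 2$ (in columns $\leq\lambda_1$). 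The hypothesis $m\geq|\mu|+\lambda_1$ forces $\lambda_1<m-|\mu|+1$, making these two pieces column-disjoint. Consequently an SYT of $\lambda^{(r+m)}/\mu^{(m)}$ amounts to choosing which $|\lambda/\mu|$ of the $r$ labels go into $\lambda/\mu$ (contributing $\binom{r}{|\lambda/\mu|}$), filling $\lambda/\mu$ by an arbitrary standard tableau (contributing $|\SYT(\lambda/\mu)|$), and filling the row-$1$ strip in its unique increasing way. The product is $\binom{r}{|\lambda/\mu|}|\SYT(\lambda/\mu)|$, as desired. The main obstacle is the second step: both verifying that $\omega$ is inner and iterating the branching rule $r$ times require care, but each is routine once the automorphism viewpoint is adopted.
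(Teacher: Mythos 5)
Your proof is correct and follows essentially the same route as the paper's: reduce to a multiplicity count (the paper phrases this via Frobenius reciprocity and induction from $\cH_{1^r,m}$, you via restriction to it --- equivalent by semisimplicity), identify the answer with $|\SYT(\lambda^{(r+m)}/\mu^{(m)})|$ by the branching rule, and count those skew tableaux using the column-disjointness forced by $m \geq |\mu|+\lambda_1$. Your explicit verification, via the inner automorphism $T_i \mapsto T_{n-i}$ given by conjugation by $T_{w_0}$, that the branching rule may be applied to the right-hand parabolic $\langle T_{r+1},\dots,T_{r+m-1}\rangle$ supplies a detail the paper leaves implicit.
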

\begin{proof}
    By Frobenius reciprocity, 
    $$\dim \mathrm{Hom}_{\cH_{1^r,m}}\left( \mathrm{triv}(1^r) \otimes S_{\mu^{(m)}}, S_{\lambda^{(r+m)}} \right) = \dim \mathrm{Hom}_{\cH_{r+m}} \left(\mathrm{Ind}_{\cH_{1^r,m}}^{\cH_{r+m}}\mathrm{triv}(1^r) \otimes S_{\mu^{(m)}}, S_{\lambda^{(r+m)}} \right)$$ and by the branching rules in Remark \ref{remark branching}, 
    $$\dim \mathrm{Hom}_{\cH_{r+m}} \left(\mathrm{Ind}_{\cH_{1^r,m}}^{\cH_{r+m}}\mathrm{triv}(1^r) \otimes S_{\mu^{(m)}}, S_{\lambda^{(r+m)}} \right) = |\SYT( \lambda^{(r+m)}/\mu^{(m)}) |$$ where we consider the skew diagram $\lambda^{(r+m)}/\mu^{(m)}$ empty unless $\mu^{(m)} \subseteq \lambda^{(r+m)}$. Note that $\mu^{(m)} \subseteq \lambda^{(r+m)}$ is equivalent to $\mu \subseteq \lambda.$ If $\mu \not\subseteq \lambda$, then the count is $0.$ Assume $\mu \subseteq \lambda$. Since $m \geq |\mu| + \lambda_1,$ the skew diagram $\lambda^{(r+m)}/\mu^{(m)}$ is a disjoint union of a horizontal row of size $r-|\lambda/\mu|$ and the skew diagram $\lambda/\mu.$ Thus 
    $$|\SYT( \lambda^{(r+m)}/\mu^{(m)}) | = \binom{r}{r-|\lambda/\mu|} |\SYT(\lambda/\mu)| = \binom{r}{|\lambda/\mu|} |\SYT(\lambda/\mu)|.$$
\end{proof}

It will be important to concretely describe a basis for the partially-trivial subspaces of the modules $S_{\lambda^{(n)}}.$

\begin{defn}\label{defn partial inv basis}
    Let $\lambda \in \Par$ and $r \geq n_{\lambda}$. Define $C_{\lambda,r}$ denote the set of all pairs $(\mu,\beta)$ of horizontal strip tableaux $\mu$ with size $\lambda_1$ on the boundary of $\lambda^{(r+\lambda_1)}$ and $\beta$ a standard filling of the complement diagram $\lambda^{(r+\lambda_1)} \setminus \mu$. We may inject $C_{\lambda,r}$ into $\SYT(\lambda^{(r+\lambda_1)})$ by assigning a given $(\mu,\beta) \in C_{\lambda,r}$ to the unique $\gamma_{\mu,\beta} \in \SYT(\lambda^{(r+\lambda_1)})$ determined by \begin{itemize}
        \item $\gamma_{\mu,\beta} |_{\mu}$ is column standard with entries $\{r+1,\dots,r+\lambda_1\}$
        \item $\gamma_{\mu,\beta}|_{\lambda^{(r+\lambda_1)}/ \mu} = \beta.$  
    \end{itemize}
    For $(\mu,\beta) \in C_{\lambda,r}$ define 
    $$v_{\mu,\beta}:= \epsilon_{r}^{(r+\lambda_1)}(e_{\gamma_{\mu,\beta}}) \in S_{\lambda^{(r+\lambda_1)}}.$$
\end{defn}

\begin{example}\label{example complement diagrams}
    Consider $\lambda = (2,1)$ and $r = 8 \geq n_{2,1} = 5.$ Then the partitions $\nu$ such that $\lambda^{(10)}/\nu = (7,2,1)/\nu$ is a horizontal strip with size $2$ are exactly $(5,2,1),(6,2),(6,1,1)$, and $(7,1):$
    \begin{itemize}
        \item[] $$\begin{ytableau}
    ~&  ~& ~& ~& ~& \star & \star \\
    ~ & ~ & \none & \none& \none &  \none & \none \\
     ~& \none & \none & \none & \none & \none &\none \\
\end{ytableau} ~~~~~\begin{ytableau}
    ~&  ~& ~& ~& ~& ~& \star \\
    ~ & ~ & \none & \none& \none &  \none & \none \\
     \star & \none & \none & \none & \none & \none &\none \\
\end{ytableau} $$
        \item[] $$\begin{ytableau}
    ~&  ~& ~& ~& ~& ~& \star \\
    ~ & \star & \none & \none& \none &  \none & \none \\
     ~& \none & \none & \none & \none & \none &\none \\
\end{ytableau} ~~~~ \begin{ytableau}
    ~&  ~& ~& ~& ~& ~& ~ \\
    ~ & \star & \none & \none& \none &  \none & \none \\
     \star & \none & \none & \none & \none & \none &\none \\
\end{ytableau} $$
    \end{itemize}
Here the $\star$'s show the complementary diagram $\mu = \lambda^{(10)}/\nu.$ Choosing any standard fillings $\beta$ for the shapes $\nu$ give elements of $C_{(2,1),8}.$
\end{example}

The vectors $v_{\mu,\beta}$ described above give a basis for the partially-trivial vectors in the modules $S_{\lambda^{(n)}}.$

\begin{prop}\label{prop partial inv basis}
    The set $\{v_{\mu,\beta}| (\mu,\beta) \in C_{\lambda,r}\}$ is a basis for $\mathrm{Hom}_{\cH_{1^r,\lambda_1}}(\mathrm{triv}(1^r,\lambda_1),  S_{\lambda^{(r+\lambda_1)}})$.
\end{prop}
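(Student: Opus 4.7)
The plan is to identify the displayed Hom space with the image of the idempotent $\epsilon := \epsilon_r^{(r+\lambda_1)}$ on $S_{\lambda^{(r+\lambda_1)}}$, decompose this image along the positions of the labels $1,\dots,r$, and then show that each $v_{\mu,\beta}$ spans a resulting one-dimensional summand.

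First I would verify that $\Hom_{\cH_{1^r,\lambda_1}}(\triv(1^r,\lambda_1),\, S_{\lambda^{(r+\lambda_1)}}) = \epsilon\cdot S_{\lambda^{(r+\lambda_1)}}$. Since $\cH_{1^r}$ has no generators, the Hom reduces to $\{v : T_i v = v,\ r+1 \le i \le r+\lambda_1 - 1\}$. The inclusion $\epsilon\cdot S_{\lambda^{(r+\lambda_1)}} \subseteq \Hom$ is immediate from Lemma~\ref{lem idem relations}. For the reverse, if $v$ is invariant then $T_\sigma v = v$ for every $\sigma \in \fS_{\lambda_1}$, so
\[
\epsilon v = \frac{1}{[\lambda_1]_t!}\left(\sum_{\sigma \in \fS_{\lambda_1}} t^{\binom{\lambda_1}{2} - \ell(\sigma)}\right)v = v,
\]
with the bracketed sum equal to $[\lambda_1]_t!$ by the $\sigma \mapsto \sigma w_0$ substitution applied to $\sum_\sigma t^{\ell(\sigma)} = [\lambda_1]_t!$.

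Next I would split $S_{\lambda^{(r+\lambda_1)}}$ as a $\cH_{\lambda_1}$-module by the map $\tau \mapsto \tau|_\nu$. For each partition $\nu \subset \lambda^{(r+\lambda_1)}$ of size $r$ and each $\beta \in \SYT(\nu)$, set $V_\beta := \mathrm{span}\{e_\tau : \tau|_\nu = \beta\}$; each $V_\beta$ is $\cH_{\lambda_1}$-stable since the generators $T_{r+1},\dots,T_{r+\lambda_1 - 1}$ only shuffle the labels $r+1,\dots,r+\lambda_1$, and $S_{\lambda^{(r+\lambda_1)}} = \bigoplus_{(\nu,\beta)} V_\beta$ as $\cH_{\lambda_1}$-modules. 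Here $V_\beta$ is precisely the skew Hecke--Specht module attached to $\mu := \lambda^{(r+\lambda_1)}/\nu$. By the skew Pieri rule for semisimple Hecke algebras --- which follows from the branching rule of Remark~\ref{remark branching} and Frobenius reciprocity, exactly as in the proof of Lemma~\ref{lem stable partial invar dims} --- the trivial representation of $\cH_{\lambda_1}$ occurs in $V_\beta$ with multiplicity $1$ if $\mu$ is a horizontal strip of size $\lambda_1$ and with multiplicity $0$ otherwise. Thus $\dim \epsilon V_\beta = 1$ precisely when $(\mu,\beta) \in C_{\lambda,r}$, and is $0$ otherwise.

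The key structural step is that each $V_\beta$ is cyclically generated over $\cH_{\lambda_1}$ by $e_{\gamma_{\mu,\beta}}$. The column-standard tableau $\gamma_{\mu,\beta}$ is the poset-minimum of the SYT-basis of $V_\beta$, and the cover formula $T_{r+i} e_{\tau'} = e_{s_{r+i}\tau'} + c\,e_{\tau'}$ from Remark~\ref{remark explicit action} (whenever $s_{r+i}\tau' > \tau'$) lets one inductively solve for every higher $e_\tau \in V_\beta$ in terms of $e_{\gamma_{\mu,\beta}}$ and the $T_{r+i}$'s. Combined with $\epsilon T_\sigma = \epsilon$ for all $\sigma \in \fS_{\lambda_1}$, this forces $\epsilon V_\beta = \epsilon \cdot \cH_{\lambda_1} e_{\gamma_{\mu,\beta}} = \bF \cdot \epsilon e_{\gamma_{\mu,\beta}} = \bF \cdot v_{\mu,\beta}$. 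Paired with the dimension count from the Pieri step, this shows that $v_{\mu,\beta}$ is nonzero exactly when $(\mu,\beta) \in C_{\lambda,r}$, and that
\[
\epsilon\cdot S_{\lambda^{(r+\lambda_1)}} = \bigoplus_{(\nu,\beta)} \epsilon V_\beta = \bigoplus_{(\mu,\beta)\in C_{\lambda,r}} \bF \cdot v_{\mu,\beta},
\]
which is the claimed basis. The main obstacle is extracting the Pieri multiplicity on the skew module $V_\beta$: this does the representation-theoretic work and lets us avoid the unpleasant direct computation of the coefficient of $e_{\gamma_{\mu,\beta}}$ in $v_{\mu,\beta}$ as a rational function of $t$.
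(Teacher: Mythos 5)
Your proof is correct, and it reaches the same three facts the paper needs (non-vanishing of $v_{\mu,\beta}$, linear independence, spanning), but it packages them differently. The paper works one vector at a time: it observes that $e_{\gamma_{\mu,\beta}}$ generates a copy of $\mathrm{Ind}_{\cH_{\kappa_{\mu}}}^{\cH_{\lambda_1}}\mathrm{triv}(\kappa_{\mu})$ (where $\kappa_{\mu}$ records the row sizes of the horizontal strip $\mu$), applies Frobenius reciprocity to that induced module to get $v_{\mu,\beta}\neq 0$, proves linear independence by noting that the $v_{\mu,\beta}$ remain joint $(\theta_1,\dots,\theta_r)$-eigenvectors with distinct eigenvalues, and proves spanning by the reachability argument from $\gamma_{\mu,\beta}$ together with the vanishing of $\epsilon_r^{(r+\lambda_1)}(e_\tau)$ when two of the last $\lambda_1$ labels are vertically adjacent. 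Your version instead decomposes the whole module into the $\cH_{\lambda_1}$-stable blocks $V_\beta$ (the skew Hecke--Specht modules) and computes $\dim\epsilon V_\beta\in\{0,1\}$ via the Pieri multiplicity; this buys you linear independence for free from the block direct sum (no Jucys--Murphy eigenvalue argument needed) and handles the ``vertically adjacent boxes kill the vector'' case uniformly as the multiplicity-zero case of Pieri. The price is that you must justify two standard but unstated facts: that $V_\beta$ really is the skew Specht module with trivial-isotypic multiplicity $c^{\lambda^{(r+\lambda_1)}}_{\nu,(\lambda_1)}$ (for each individual $\beta$, not just summed over $\beta$ of shape $\nu$ --- this is fine since the structure constants of $V_\beta$ depend only on $\nu$), and the same tableau-reachability fact from the column-standard filling that the paper itself uses in Lemma \ref{lem action of partial triv idem}. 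Your opening verification that the Hom space equals $\epsilon\cdot S_{\lambda^{(r+\lambda_1)}}$, including the $\sigma\mapsto\sigma w_0$ computation showing $\epsilon$ acts as the identity on invariants, is a point the paper passes over silently and is worth making explicit.
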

\begin{proof}
    From Remark \ref{remark explicit action} it is easy to see that for all $(\mu,\beta) \in C_{\lambda,r}$ the vector $e_{\gamma_{\mu,\beta}}$ generates a $\cH_{1^r,\lambda_1}$-submodule of $S_{\lambda^{(r+\lambda_1)}}$ isomorphic to $\mathrm{Ind}_{\cH_{1^r,\kappa_{\mu}}}^{\cH_{1^r,\lambda_1}} \mathrm{triv}(1^r,\kappa_{\mu}) \cong \mathrm{Ind}_{\cH_{\kappa_{\mu}}}^{\cH_{\lambda_1}} \mathrm{triv}(\kappa_{\mu})$ where $\kappa_{\mu}$ is the composition of size $\lambda_1$ formed from the sizes of the rows of $\mu$ reading bottom to top. By Frobenius reciprocity, 
    $$\dim \mathrm{Hom}_{\cH_{\lambda_1}}\left( \mathrm{Ind}_{\cH_{\kappa_{\mu}}}^{\cH_{\lambda_1}} \mathrm{triv}(\kappa_{\mu}) , \mathrm{triv}(\lambda_1) \right) = \dim \mathrm{Hom}_{\cH_{\kappa_{\mu}}}\left( \mathrm{triv}(\kappa_{\mu}) , \mathrm{triv}(\kappa_{\mu}) \right) =1$$ and furthermore, the vector $e_{\gamma_{\mu,\beta}}$, which represents the copy of $\mathrm{triv}(\kappa_{\mu})$ in $\mathrm{Ind}_{\cH_{\kappa_{\mu}}}^{\cH_{\lambda_1}} \mathrm{triv}(\kappa_{\mu})$, must map onto the $1$-dimensional generator of $\mathrm{triv}(\lambda_1)$ upon application of the trivial-symmetrizer $\epsilon_{r}^{(r+\lambda_1)}$. Therefore, for all $(\mu,\beta) \in C_{\lambda,r}$, $v_{\mu,\beta} = \epsilon_{r}^{(r+\lambda_1)}(e_{\gamma_{\mu,\beta}}) \neq 0.$ Furthermore, since $\theta_j \epsilon_{r}^{(r+\lambda_1)} = \epsilon_{r}^{(r+\lambda_1)}\theta_{j}$ for $1\leq j \leq r$, we know that each $v_{\mu,\beta}$ is still a simultaneous eigenvector for $(\theta_1,\dots,\theta_{r}).$ Since the joint $(\theta_1,\dots,\theta_{r})$-eigenvalues determine the shape of $\lambda^{(r+\lambda_1)}/\mu$ and the standard filling $\beta$, it follows that the $v_{\mu,\beta}$ are linearly independent. Finally, to see that $\{v_{\mu,\beta}| \mu \in C_{\lambda,r}\}$ is a basis for $\mathrm{Hom}_{\cH_{1^r,\lambda_1}}(\mathrm{triv}(1^r,\lambda_1),  S_{\lambda^{(r+\lambda_1)}})$ note that for any $\tau \in \SYT(\lambda^{(r+\lambda_1)})$ we may use the placement of the entries $\{r+1,\dots,r+\lambda_1\}$ to determine a boundary shape $\mu$ of $\lambda^{(r+\lambda_1)}$ and using the filling of the complementary diagram $\lambda^{(r+\lambda_1)}\setminus \mu$ to determine $\beta.$ If $\mu$ contains any two vertically connected boxes, then we know $\epsilon_{r}^{(r+\lambda_1)}(e_{\tau}) = 0.$ Otherwise, we know $(\mu,\beta) \in C_{\lambda,r}$ and so by induction using the relations from Remark \ref{remark explicit action} we are able to write $\epsilon_{r}^{(r+\lambda_1)}(e_{\tau}) = \alpha v_{\mu,\beta}$ for some $\alpha \in \bF$. Thus $\{ v_{\mu,\beta}| (\mu,\beta) \in C_{\lambda,r}\}$ spans $\epsilon_{r}^{(r+\lambda_1)}\left( S_{\lambda^{(r+\lambda_1)}} \right) \equiv \mathrm{Hom}_{\cH_{1^r,\lambda_1}}(\mathrm{triv}(1^r,\lambda_1),  S_{\lambda^{(r+\lambda_1)}})$ and is therefore a basis. 
\end{proof}

\begin{remark}
    It is an interesting combinatorial consequence of Proposition \ref{prop partial inv basis} that for all $\lambda \in \Par$ and $r \geq n_{\lambda}$, $|C_{\lambda,r}| = \binom{r}{ |\lambda|} |\SYT(\lambda)|.$ That is to say, $$\sum_{\substack{\lambda^{(r+\lambda_1)}/\nu ~\text{hor. strip}\\ |\nu| = r}} |\SYT(\nu)| = \binom{r}{|\lambda|} |\SYT(\lambda)|.$$ For a nontrivial example, take $\lambda = (2,1)$ and $r = 8 \geq n_{2,1} = 5$ as in Example \ref{example complement diagrams}. Then the partitions $\nu$ such that $\lambda^{(10)}/\nu = (7,2,1)/\nu$ is a horizontal strip with size $2$ are exactly $(5,2,1),(6,2),(6,1,1)$, and $(7,1).$ The standard tableaux counts for these partitions are $64,20,21$, and $7$ respectively. Now, $\binom{8}{3} |\SYT(2,1)| = 56\cdot 2 = 112$ and indeed we have $64+20+21+7 = 112.$ It would be interesting to find a bijective proof of this fact.
\end{remark}

Importantly, the vectors $v_{\mu,\beta}$ actually give a basis for the partially-trivial vectors in the full module $S_{\lambda^{(\infty)}}.$

\begin{cor}\label{cor stable partial inv basis}
    The set $\{v_{\mu,\beta}| (\mu,\beta) \in C_{\lambda,r}\}$ is a basis for $\mathrm{Hom}_{\cH_{1^r,\infty}}(\mathrm{triv}(1^r,\infty),  S_{\lambda^{(\infty)}}).$
\end{cor}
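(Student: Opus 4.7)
The plan is to reduce the corollary to the finite-level result of Proposition \ref{prop partial inv basis} combined with the stable dimension count of Lemma \ref{lem stable partial invar dims}.

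First I verify that each $v_{\mu,\beta}$, viewed inside $S_{\lambda^{(\infty)}}$ through the canonical inclusion from $S_{\lambda^{(r+\lambda_1)}}$, is $\cH_{1^r,\infty}$-invariant. Invariance under $T_j$ for $r+1 \leq j \leq r+\lambda_1-1$ is immediate from Lemma \ref{lem idem relations}, which gives $T_j\epsilon_{r}^{(r+\lambda_1)} = \epsilon_{r}^{(r+\lambda_1)}$, so $T_j v_{\mu,\beta} = v_{\mu,\beta}$. For $j \geq r + \lambda_1$, I pass to $S_{\lambda^{(n)}}$ for any $n \geq j+1$ via the identification in Definition \ref{defn inf specht module}. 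The key observation is that the extended basis vectors supporting $v_{\mu,\beta}$ all have entries $r+\lambda_1+1,\ldots,n$ appearing in consecutive positions of the first row of the extended diagram. By the last bullet of Remark \ref{remark explicit action}, $T_j$ then acts as the identity on each such $e_{\tau'}$, and hence on $v_{\mu,\beta}$.

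Linear independence of $\{v_{\mu,\beta}\}_{(\mu,\beta)\in C_{\lambda,r}}$ then follows from Proposition \ref{prop partial inv basis} together with the injectivity of the inclusion $S_{\lambda^{(r+\lambda_1)}}\hookrightarrow S_{\lambda^{(\infty)}}$.

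For spanning, take any $v \in \mathrm{Hom}_{\cH_{1^r,\infty}}(\mathrm{triv}(1^r,\infty), S_{\lambda^{(\infty)}})$. Since $S_{\lambda^{(\infty)}} = \bigcup_n S_{\lambda^{(n)}}$, there exists $n$, which we take with $n \geq r+\lambda_1$, so that $v \in S_{\lambda^{(n)}}$. As $v$ is in particular invariant under $T_j$ for $r+1 \leq j \leq n-1$, we have $v \in \mathrm{Hom}_{\cH_{1^r,n-r}}(\mathrm{triv}(1^r,n-r), S_{\lambda^{(n)}})$. By Lemma \ref{lem stable partial invar dims} applied with $\mu = \emptyset$ and $m = n-r \geq \lambda_1$, this invariant space has dimension exactly $\binom{r}{|\lambda|}|\SYT(\lambda)| = |C_{\lambda,r}|$. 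Since the $v_{\mu,\beta}$ form a set of $|C_{\lambda,r}|$ linearly independent invariants sitting inside this finite-dimensional space, they must span it, and $v$ is a linear combination of the $v_{\mu,\beta}$'s.

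The main obstacle is the invariance check for the higher generators $T_j$ with $j \geq r+\lambda_1$; the argument depends on carefully tracking where entries land under the canonical extension of SYTs in Definition \ref{defn inf specht module}, specifically that the newly added entries occupy a consecutive horizontal block in the first row so that each $T_j$ for $j\geq r+\lambda_1$ falls into the same-row case of Remark \ref{remark explicit action}. Once this is in place, the remainder of the argument is a routine dimension match via Lemma \ref{lem stable partial invar dims}.
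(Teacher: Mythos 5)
Your overall strategy (reduce to a finite level and match dimensions via Lemma \ref{lem stable partial invar dims}) is sound and close in spirit to the paper's proof, which identifies $\mathrm{Hom}_{\cH_{1^r,\infty}}(\mathrm{triv}(1^r,\infty), S_{\lambda^{(\infty)}})$ with $\mathrm{Hom}_{\cH_{1^r,\lambda_1}}(\mathrm{triv}(1^r,\lambda_1), S_{\lambda^{(r+\lambda_1)}})$ using Lemma \ref{lem stability of partial triv idem} and then quotes Proposition \ref{prop partial inv basis}. However, your invariance check for the higher generators has a genuine gap at the single index $j = r+\lambda_1$. The support of $v_{\mu,\beta} = \epsilon_r^{(r+\lambda_1)}(e_{\gamma_{\mu,\beta}})$ consists of tableaux in which the entries $\{r+1,\dots,r+\lambda_1\}$ occupy the boxes of the horizontal strip $\mu$, and $\mu$ need not lie in the first row: in Example \ref{example complement diagrams} with $\lambda=(2,1)$, $r=8$, one of the strips occupies boxes in rows $2$ and $3$, so the entry $r+\lambda_1 = 10$ sits in row $2$ or $3$ of every tableau in the support, while the appended entry $11$ sits at the end of row $1$. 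Hence $T_{r+\lambda_1}$ does \emph{not} fall into the same-row case of Remark \ref{remark explicit action} on those basis vectors, and your term-by-term argument breaks. (The argument is fine for $j \geq r+\lambda_1+1$, where both $j$ and $j+1$ lie among the appended first-row entries; also note you cite the same-column bullet of Remark \ref{remark explicit action} where you mean the same-row one.)

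The conclusion $T_{r+\lambda_1}\,v_{\mu,\beta} = v_{\mu,\beta}$ is still true, but it is a cancellation across the terms of the sum rather than a term-by-term identity, and it needs a different justification. The clean fix is exactly the paper's: by Lemma \ref{lem stability of partial triv idem}, $v_{\mu,\beta} = \epsilon_r^{(r+\lambda_1)}(e_{\gamma_{\mu,\beta}}) = \epsilon_r^{(n)}(e_{\gamma_{\mu,\beta}})$ inside $S_{\lambda^{(n)}}$ for every $n \geq r+\lambda_1$, and then Lemma \ref{lem idem relations} gives $T_j\,\epsilon_r^{(n)} = \epsilon_r^{(n)}$ for all $r+1 \leq j \leq n-1$, which yields the required invariance in one stroke. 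With that substitution, your linear-independence and dimension-count steps go through and your proof is a valid (slightly more explicit) variant of the paper's.
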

\begin{proof}
    We know that the embeddings $S_{\lambda^{(n)}} \subseteq S_{\lambda^{(n+1)}}$ are injective for all $n \geq n_{\lambda}$ so certainly $\mathrm{Hom}_{\cH_{1^r,m}}(\mathrm{triv}(1^r,m),  S_{\lambda^{(r+m)}}) \subseteq \mathrm{Hom}_{\cH_{1^r,m+1}}(\mathrm{triv}(1^r,m+1),  S_{\lambda^{(r+m+1)}})$ are also embeddings. By Lemma \ref{lem stability of partial triv idem}, it follows that $\mathrm{Hom}_{\cH_{1^r,\infty}}(\mathrm{triv}(1^r,\infty),  S_{\lambda^{(\infty)}}) = \mathrm{Hom}_{\cH_{1^r,\lambda_1}}(\mathrm{triv}(1^r,\lambda_1),  S_{\lambda^{(r+\lambda_1)}}) \subseteq S_{\lambda^{(r+\lambda_1)}}.$ Thus from Proposition \ref{prop partial inv basis}, we know that $\{v_{\mu,\beta}| (\mu,\beta) \in C_{\lambda,r}\}$ is a basis for the $\mathrm{triv}(1^r,\infty)$ vectors in $S_{\lambda^{(\infty)}}$.
\end{proof}

Using the basis vectors $v_{\mu,\beta}$, we are able to fully to describe the action of the partial-trivial idempotents $\epsilon_{r}^{(n)}$ on the modules $S_{\lambda^{(n)}}.$

\begin{lem}\label{lem action of partial triv idem}
    If $m \geq \lambda_1, r\geq n_{\lambda},$ and $\tau \in \SYT(\lambda^{(r+m)}),$ then there exists a unique $(\mu,\beta) \in C_{\lambda,r}$ and scalar $\alpha_r(\tau) \in \bF$ such that $\epsilon_{r}^{(r+m)}(e_{\tau}) = \alpha_r(\tau) v_{\mu,\beta}$. Furthermore, $|\alpha_r(\tau)| \leq 1.$
\end{lem}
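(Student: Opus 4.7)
The plan is to combine the structural description of the partially-trivial subspace from Proposition~\ref{prop partial inv basis} with a Jucys--Murphy eigenvalue argument, and then to bound norms using the ultrametric inequality applied to the explicit $T_i$-action.

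First I would observe that since $\epsilon_r^{(r+m)}$ is an idempotent satisfying $T_j\epsilon_r^{(r+m)} = \epsilon_r^{(r+m)} = \epsilon_r^{(r+m)}T_j$ for all $r+1 \leq j \leq r+m-1$ (Lemma~\ref{lem idem relations}), its image in $S_{\lambda^{(r+m)}}$ is precisely the isotype $\Hom_{\cH_{1^r,m}}(\triv(1^r,m), S_{\lambda^{(r+m)}})$. By Corollary~\ref{cor stable partial inv basis} combined with Proposition~\ref{prop partial inv basis}, this space has basis $\{v_{\mu,\beta}\}_{(\mu,\beta) \in C_{\lambda,r}}$, viewed inside $S_{\lambda^{(r+\lambda_1)}} \subseteq S_{\lambda^{(r+m)}}$. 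Thus I may write $\epsilon_r^{(r+m)}(e_\tau) = \sum_{(\mu,\beta) \in C_{\lambda,r}} c_{\mu,\beta}\, v_{\mu,\beta}$ for some scalars $c_{\mu,\beta} \in \bF$.

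To pin down a single pair $(\mu,\beta)$, I would compare joint $(\theta_1, \ldots, \theta_r)$-eigenvalues. Since each $\theta_j$ for $1 \leq j \leq r$ is a polynomial in $T_1^{\pm 1}, \ldots, T_{j-1}^{\pm 1}$, which commute with the $T_i$ for $i \geq r+1$ appearing in $\epsilon_r^{(r+m)}$, we get $\theta_j \epsilon_r^{(r+m)} = \epsilon_r^{(r+m)} \theta_j$. Hence $\epsilon_r^{(r+m)}(e_\tau)$ retains the eigenvalue tuple $(t^{c_\tau(1)}, \ldots, t^{c_\tau(r)})$ of $e_\tau$. Each $v_{\mu,\beta}$ is itself a joint eigenvector with eigenvalues $(t^{c_\beta(1)}, \ldots, t^{c_\beta(r)})$, and the content sequence of a standard Young tableau recovers its filling $\beta$ (hence also its shape, and $\mu = \lambda^{(r+\lambda_1)} \setminus \mathrm{shape}(\beta)$) uniquely. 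Distinct $(\mu,\beta) \in C_{\lambda,r}$ therefore produce distinct joint eigenvalue tuples, so at most one $v_{\mu,\beta}$ can appear with nonzero coefficient, yielding the required unique $(\mu,\beta)$ and $\alpha_r(\tau)$.

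For the norm bound, equip $S_{\lambda^{(r+m)}}$ with the sup-norm $\|\cdot\|$ relative to the basis $\{e_\tau\}_{\tau \in \SYT(\lambda^{(r+m)})}$. Since $|t|<1$ forces $|1-t^k| = 1$ for all $k \geq 1$, an inspection of Remark~\ref{remark explicit action} shows that every scalar appearing in $T_i(e_\tau)$ has $\bF$-norm $\leq 1$; by the ultrametric inequality this gives $\|T_i\| \leq 1$ and therefore $\|T_\sigma\| \leq 1$ for every $\sigma$. Combined with $|[m]_t!| = 1$ and $|t^{\binom{m}{2} - \ell(\sigma)}| \leq 1$, the definition of $\epsilon_r^{(r+m)}$ yields $\|\epsilon_r^{(r+m)}\| \leq 1$, and hence $\|\epsilon_r^{(r+m)}(e_\tau)\| \leq 1$. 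To convert this into $|\alpha_r(\tau)| \leq 1$, I need $\|v_{\mu,\beta}\| = 1$. The same operator bound gives $\|v_{\mu,\beta}\| \leq 1$; for the reverse inequality I would compute the diagonal coefficient of $e_{\gamma_{\mu,\beta}}$ in $v_{\mu,\beta} = \epsilon_r^{(r+\lambda_1)}(e_{\gamma_{\mu,\beta}})$ by solving the $T_i$-invariance conditions inside $\mathrm{span}\{e_{\sigma(\gamma_{\mu,\beta})}\}_{\sigma \in \mathfrak{S}_{\lambda_1}}$. This coefficient factors as a rational function in $t$ whose numerator and denominator are products of $[k]_t$ and powers of $t$, each of which has $\bF$-norm $1$ by the identity $|1-t^k| = 1$.

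The main analytic obstacle is this last step. The Jucys--Murphy commutation and the ultrametric operator-norm bounds are largely mechanical; the nontrivial point is producing \emph{some} coefficient of $v_{\mu,\beta}$ in the $\{e_\tau\}$-basis with non-archimedean norm exactly $1$, which is a purely combinatorial rational-function calculation once the explicit form of $v_{\mu,\beta}$ is unwound. Once $\|v_{\mu,\beta}\| = 1$ is established, the bound $|\alpha_r(\tau)| \leq 1$ is immediate from $\epsilon_r^{(r+m)}(e_\tau) = \alpha_r(\tau) v_{\mu,\beta}$.
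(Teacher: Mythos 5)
Your overall strategy (Jucys--Murphy eigenvalues for uniqueness, operator-norm estimates for the bound) is reasonable, but there are two genuine problems. First, a circularity: you invoke Corollary \ref{cor stable partial inv basis} to identify the image of $\epsilon_r^{(r+m)}$ with $\mathrm{span}\{v_{\mu,\beta}\}$, but that corollary is deduced from Lemma \ref{lem stability of partial triv idem}, which is itself a consequence of the lemma you are trying to prove. Proposition \ref{prop partial inv basis} alone only covers the case $m=\lambda_1$; for $m>\lambda_1$ you would still need to know that each $v_{\mu,\beta}$ is fixed by $T_j$ for $r+\lambda_1\leq j\leq r+m-1$ (equivalently, that $\epsilon_r^{(r+\lambda_1)}(e_{\gamma_{\mu,\beta}})=\epsilon_r^{(r+m)}(e_{\gamma_{\mu,\beta}})$), and that is exactly the stability statement you are not yet entitled to use. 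Second, and more seriously, the norm bound is incomplete: $||\epsilon_r^{(r+m)}||\leq 1$ only gives $|\alpha_r(\tau)|\cdot||v_{\mu,\beta}||\leq 1$, and you correctly identify that you need $||v_{\mu,\beta}||=1$ --- but you do not prove it, and your sketch ("numerator and denominator are products of $[k]_t$ and powers of $t$, each of which has $\bF$-norm $1$") is not right as stated, since positive powers of $t$ have norm strictly less than $1$; one would have to show that no net positive power of $t$ survives in at least one coefficient of $v_{\mu,\beta}$, which is the real content of the claim and is left undone.

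The paper's proof sidesteps both issues with a single recursion. Every $\tau\in \SYT(\lambda^{(r+m)})$ is reached from a unique $\gamma_{\mu,\beta}$ by a chain of covers $s_j(\tau')>\tau'$ with $r+1\leq j\leq r+m-1$, determined by where the entries $r+1,\dots,r+\lambda_1$ sit. Applying $\epsilon_r^{(r+m)}T_j=\epsilon_r^{(r+m)}$ to the explicit formula for $T_j(e_{\tau'})$ from Remark \ref{remark explicit action} gives
$$\alpha_r(s_j(\tau'))=t\,\frac{1-t^{c_{\tau'}(j+1)-c_{\tau'}(j)-1}}{1-t^{c_{\tau'}(j+1)-c_{\tau'}(j)}}\,\alpha_r(\tau'),$$
a multiplier of norm $|t|\leq 1$ since $c_{\tau'}(j+1)-c_{\tau'}(j)\geq 2$, and the base case $\alpha_r(\gamma_{\mu,\beta})=1$ holds by the definition of $v_{\mu,\beta}$. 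This yields existence, uniqueness, and $|\alpha_r(\tau)|\leq 1$ simultaneously, with no need to compute $||v_{\mu,\beta}||$ or to identify the span of the $v_{\mu,\beta}$ in advance. If you want to keep your route, you must prove $||v_{\mu,\beta}||=1$ directly and replace the appeal to Corollary \ref{cor stable partial inv basis} with an argument that does not pass through this lemma.
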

\begin{proof}
    By considering the placement of the entries $\{r+1,\dots,r+\lambda_1\}$, any $\tau \in \SYT(\lambda^{(r+m)})$ may be obtained from a unique $\gamma_{\mu,\beta}$ with $C_{\lambda,r}$ via a sequence of steps $\tau \mapsto s_j(\tau)$ with $s_{j}(\tau) > \tau$ for $r+1 \leq j \leq r+m-1.$ From Lemma \ref{lem idem relations}, we see then that for all $\tau \in \SYT(\lambda^{(r+m)}),$ $\epsilon_{r}^{(r+m)}(e_{\tau}) = \alpha_r(\tau) v_{\mu,\beta}$ for some unique $(\mu,\beta) \in C_{\lambda,r}$ and scalar $\alpha_r(\tau).$ In particular, if $s_{j}(\tau) > \tau$ for $r+1 \leq j \leq r+m-1$, then using the relation $\epsilon_r^{(r+m)}T_j = \epsilon_r^{(r+m)},$ we find that 
    $$a_r(s_{j}(\tau)) = t\frac{1-t^{c_{\tau}(j+1)-c_{\tau}(j)-1}}{1-t^{c_{\tau}(j+1)-c_{\tau}(j)}} a(\tau).$$ Lastly, since $a_r(\gamma_{\mu,\beta}) = 1$ for all $\mu \in C_{\lambda,r}$, by induction we see that $|a(\tau)| \leq 1.$
\end{proof}

The action of the partially-trivial idempotents $\epsilon_{r}^{(n)}$ is stable in the following sense.

\begin{lem}\label{lem stability of partial triv idem}
    If $\tau \in \SYT_{\infty}(\lambda)$ and $r \geq n_{\lambda},$ then for all $m' \geq m \geq \max\{\lambda_1, \mathrm{rk}(\tau)\},$ $\epsilon_{r}^{(r+m)}(e_{\tau}) = \epsilon_{r}^{(r+m')}(e_{\tau}).$
\end{lem}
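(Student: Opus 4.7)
The plan is to reduce to the case $m' = m+1$ by telescoping and then to compare both sides using Lemma~\ref{lem action of partial triv idem}. Let $\tau_m \in \SYT(\lambda^{(r+m)})$ and $\tau_{m+1} \in \SYT(\lambda^{(r+m+1)})$ denote the standard Young tableau representatives of $\tau$ under the direct-limit identification; by construction, $\tau_{m+1}$ is obtained from $\tau_m$ by appending the entry $r+m+1$ at the end of the first row. Lemma~\ref{lem action of partial triv idem} produces unique pairs $(\mu, \beta), (\mu', \beta') \in C_{\lambda, r}$ and scalars with
\[
\epsilon_r^{(r+m)}(e_{\tau_m}) = \alpha_r(\tau_m)\, v_{\mu, \beta}, \qquad \epsilon_r^{(r+m+1)}(e_{\tau_{m+1}}) = \alpha_r(\tau_{m+1})\, v_{\mu', \beta'},
\]
so it suffices to match the pairs and the scalars, after which the stable nature of $v_{\mu, \beta}$ inside $S_{\lambda^{(\infty)}}$ closes the argument.

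First I would observe that the pair $(\mu, \beta)$ is read off directly from the sub-tableau of $\tau_m$ on entries $\{1, \ldots, r\}$: the reduction of $\tau_m$ downward to $\gamma_{\mu, \beta}$ in the proof of Lemma~\ref{lem action of partial triv idem} uses only transpositions $s_j$ with $j \in \{r+1, \ldots, r+m-1\}$, none of which touch entries $\leq r$, so $\beta$ equals this sub-tableau and $\mu$ is its complementary horizontal strip inside $\lambda^{(r+\lambda_1)}$. Since $\tau_m$ and $\tau_{m+1}$ differ only by the added cell carrying $r+m+1 > r$, their sub-tableaux on $\{1, \ldots, r\}$ coincide, forcing $(\mu, \beta) = (\mu', \beta')$.

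Next I would compare the scalars. The distinguished tableau $\gamma_{\mu, \beta} \in \SYT(\lambda^{(r+m+1)})$ is itself the extension of $\gamma_{\mu, \beta} \in \SYT(\lambda^{(r+m)})$ by placing $r+m+1$ at the end of the first row. Therefore any upward chain
\[
\gamma_{\mu, \beta} = \tau^{(0)} \xrightarrow{s_{j_1}} \tau^{(1)} \xrightarrow{s_{j_2}} \cdots \xrightarrow{s_{j_\ell}} \tau^{(\ell)} = \tau_m
\]
in $\SYT(\lambda^{(r+m)})$, with each $j_i \in \{r+1, \ldots, r+m-1\}$, lifts index-for-index to an upward chain from $\gamma_{\mu, \beta}$ to $\tau_{m+1}$ in $\SYT(\lambda^{(r+m+1)})$: the appended cell is never involved in any of these swaps, and the relevant content values $c(j_{i+1})$ and $c(j_{i+1}+1)$ at each step are unchanged by the extension. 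The recursive product formula for $\alpha_r$ from the proof of Lemma~\ref{lem action of partial triv idem} therefore yields $\alpha_r(\tau_m) = \alpha_r(\tau_{m+1})$.

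Combining, one gets $\epsilon_r^{(r+m)}(e_{\tau_m}) = \alpha_r(\tau_m)\, v_{\mu, \beta} = \alpha_r(\tau_{m+1})\, v_{\mu', \beta'} = \epsilon_r^{(r+m+1)}(e_{\tau_{m+1}})$ as elements of $S_{\lambda^{(\infty)}}$, and a telescoping induction handles arbitrary $m' \geq m$. The main delicate point is the bookkeeping of identifications: one must check that the downward reduction really does land inside the data $C_{\lambda, r}$ carved out on $\lambda^{(r+\lambda_1)}$ (this is where the hypothesis $m \geq \lambda_1$ is essential, while $m \geq \mathrm{rk}(\tau)$ guarantees that $\tau_m$ is defined), and in the degenerate case where no such reduction exists the argument still applies because both $\alpha_r(\tau_m)$ and $\alpha_r(\tau_{m+1})$ vanish for the identical combinatorial reason.
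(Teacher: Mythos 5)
Your proof is correct and is essentially the paper's argument: the paper disposes of this lemma in one line by noting that, via Lemma \ref{lem action of partial triv idem}, the data $(\mu,\beta,\alpha_r(\tau))$ on the right-hand side of $\epsilon_r^{(r+m)}(e_\tau)=\alpha_r(\tau)v_{\mu,\beta}$ does not depend on $m$, which is exactly what you verify in detail (same pair $(\mu,\beta)$ because entries $\le r$ are untouched, same scalar because the reduction chain and its content gaps lift unchanged). Your explicit treatment of the telescoping and of the degenerate vanishing case is more careful than the paper's but not a different method.
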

\begin{proof}
    This is a consequence of the fact that from Lemma \ref{lem action of partial triv idem}, the right-hand side of the equations $\epsilon_{r}^{(r+m)}(e_{\tau}) = \alpha_r(\tau) v_{\mu,\beta}$ do not depend on $m$ so long as $m \geq \max\{\lambda_1, \mathrm{rk}(\tau)\}.$
\end{proof}

We are now ready to give the definition of the modules $\hat{S}_{\lambda}.$

\begin{defn}\label{defn inf specht module closure}
    For $\lambda \in \Par$, define  $\hat{S}_{\lambda}$ as the space of formal sums $\sum_{\tau \in \SYT_{\infty}(\lambda)} c_{\tau} e_{\tau}$ such that for all $c > 0$ there exist only finitely many $\tau \in \SYT_{\infty}(\lambda)$ such that $|c_{\tau}| \geq c$. We give $\hat{S}_{\lambda}$ the norm $||\sum_{\tau \in \SYT_{\infty}(\lambda)} c_{\tau} e_{\tau} ||:= \sup_{\tau \in \SYT_{\infty}(\lambda)} |c_{\tau}|.$ In other words, $\hat{S}_{\lambda}:= c_{0}(\SYT_{\infty}(\lambda))$. For $r \geq 1$, define the operator $\epsilon_r: \hat{S}_{\lambda} \rightarrow \hat{S}_{\lambda}$ as 
    $$\epsilon_{r}\left( \sum_{\tau \in \SYT_{\infty}(\lambda)} c_{\tau} e_{\tau}\right):= \sum_{\tau \in \SYT_{\infty}(\lambda) } c_{\tau} \lim_{m \rightarrow \infty }\epsilon_{r}^{(r+ m)}(e_{\tau}).$$ For $i \geq 1,$ define the operator $T_i: \hat{S}_{\lambda} \rightarrow \hat{S}_{\lambda}$ by 
    \begin{align*}
        T_i&\left( \sum_{\tau \in \SYT_{\infty}(\lambda)} c_{\tau} e_{\tau} \right):= \\
        & \sum_{\{i,i+1 \} ~ \text{same row of} ~ \tau } c_{\tau}e_{\tau} + \sum_{\{i,i+1 \} ~ \text{same column of}~ \tau} (-t)c_{\tau}e_{\tau}+ \sum_{\tau< s_i(\tau)}  \left( e_{s_{i}(\tau)} + \frac{1-t}{1-t^{c_{\tau}(i+1)-c_{\tau}(i)}} e_{\tau} \right)\\
        &+ \sum_{\tau > s_{i}(\lambda)}\left( t\frac{(1-t^{c_{\tau}(i) - c_{\tau}(i+1) -1})(1-t^{c_{\tau}(i)-c_{\tau}(i+1)+1})}{(1-t^{c_{\tau}(i)-c_{\tau}(i+1)})^2} e_{s_{i}(\tau)} - t^{c_{\tau}(i)-c_{\tau}(i+1)} \frac{1-t}{1-t^{c_{\tau}(i)-c_{\tau}(i+1)}}e_{\tau} \right).\\
    \end{align*}
\end{defn}

Crucially, note that $S_{\lambda^{(\infty)}}$ is a dense subspace of $\hat{S}_{\lambda}.$ We use this fact repeatedly.

\begin{lem}\label{lem operators are bounded}
    The operators $T_i,\epsilon_r: \hat{S}_{\lambda} \rightarrow \hat{S}_{\lambda}$ are bounded with $||T_i||,||\epsilon_r|| \leq 1.$ In particular, $||T_i|| = 1$, and $||\epsilon_r|| = 1$ for $r \geq n_{\lambda}.$
\end{lem}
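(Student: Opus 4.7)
The plan is to prove the upper bounds $\|T_i\|, \|\epsilon_r\| \leq 1$ first, then exhibit specific unit vectors achieving equality for $\|T_i\|$ (for all $i$) and for $\|\epsilon_r\|$ (when $r \geq n_\lambda$).

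For the $T_i$ bound I would work directly from the formulas in Remark \ref{remark explicit action}. The key observation is the non-archimedean identity $|1 - t^d| = 1$ for every integer $d \geq 1$, which follows from $|t| < 1$ and the ultrametric inequality. Since swapping $i, i+1$ within a single row or column is impossible for an SYT, whenever $s_i(\tau)$ differs from $\tau$ in the poset one has $|c_\tau(i+1) - c_\tau(i)| \geq 2$, so each rational factor $\frac{1-t}{1-t^d}$ has norm exactly $1$; the remaining coefficients $1$, $-t$, $t\frac{(1-t^{d-1})(1-t^{d+1})}{(1-t^d)^2}$, and $-t^d \frac{1-t}{1-t^d}$ also have norm $\leq 1$. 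Since $T_i(e_\tau)$ is supported on $\{e_\tau, e_{s_i(\tau)}\}$, the coefficient of $e_\sigma$ in $T_i(\sum_\tau c_\tau e_\tau)$ depends only on $c_\sigma$ and $c_{s_i(\sigma)}$; this yields both boundedness ($\|T_i(v)\| \leq \|v\|$) via the ultrametric inequality and closure in $c_0$.

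For $\epsilon_r$ I would use Lemmas \ref{lem action of partial triv idem} and \ref{lem stability of partial triv idem} to write $\epsilon_r(e_\tau) = \alpha_r(\tau) v_{\mu,\beta}$ with $|\alpha_r(\tau)| \leq 1$. The bound $\|v_{\mu,\beta}\| \leq 1$ follows by expanding $v_{\mu,\beta} = \epsilon_r^{(r+\lambda_1)}(e_{\gamma_{\mu,\beta}})$ via Definition \ref{defn partial trivial idem}: the scalar $\frac{1}{[\lambda_1]_t!} t^{\binom{\lambda_1}{2}-\ell(\sigma)}$ has norm $\leq 1$ (since $|[\lambda_1]_t!| = 1$ and $\binom{\lambda_1}{2}-\ell(\sigma) \geq 0$), while $\|T_\sigma\| \leq 1$ holds by multiplicativity from the $T_i$ bound just established. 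Extension to general $v \in \hat{S}_\lambda$ is painless: $\epsilon_r(v)$ lands in the finite-dimensional subspace spanned by $\{v_{\mu,\beta}: (\mu,\beta) \in C_{\lambda,r}\}$ so automatically lies in $c_0$, and convergence of each coefficient sum is the standard non-archimedean fact that $a_\tau \to 0$ implies $\sum_\tau a_\tau$ converges.

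For the lower bounds: $\|T_i\| \geq 1$ is immediate by taking any $\tau$ that places $i, i+1$ consecutively in the (unbounded) first row of $\lambda^{(\infty)}$, since then $T_i(e_\tau) = e_\tau$. For $\|\epsilon_r\| \geq 1$ with $r \geq n_\lambda$, the hypothesis ensures the first row of $\lambda^{(r+\lambda_1)}$ has length $\geq 2\lambda_1$, so $\mu$ can be taken as its last $\lambda_1$ cells (a horizontal strip disjoint from $\lambda$); for any compatible $\beta$, the entries $r+1, \dots, r+\lambda_1$ in $\gamma_{\mu,\beta}$ sit consecutively in row $1$, so every $T_j$ with $r+1 \leq j \leq r+\lambda_1-1$ fixes $e_{\gamma_{\mu,\beta}}$, and the substitution $\sigma \mapsto w_0\sigma$ collapses $\sum_\sigma t^{\binom{\lambda_1}{2}-\ell(\sigma)}$ to $[\lambda_1]_t!$, yielding $\epsilon_r(e_{\gamma_{\mu,\beta}}) = e_{\gamma_{\mu,\beta}}$. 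The only real obstacle is ordering the arguments correctly: boundedness of $\epsilon_r$ genuinely requires $\|T_i\| \leq 1$ as input through the multiplicativity step, after which the whole proof reduces to careful non-archimedean bookkeeping.
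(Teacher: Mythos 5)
Your proposal is correct and follows essentially the same route as the paper: upper bounds for $T_i$ from the explicit coefficients in Remark \ref{remark explicit action} together with $|1-t^d|=1$, upper bounds for $\epsilon_r$ from Lemma \ref{lem action of partial triv idem} and $\|v_{\mu,\beta}\|\leq 1$, and lower bounds from explicit fixed vectors. Your witness for $\|\epsilon_r\|=1$ (entries $r+1,\dots,r+\lambda_1$ placed consecutively at the end of the first row, so the symmetrizer sum collapses to $[\lambda_1]_t!$) contains the paper's choice $e_{\tau_{\lambda}}$ as the special case where $\beta$ is column-standard, so this is the same idea.
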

\begin{proof}
    Remark \ref{remark explicit action} shows directly that $||T_i|| \leq 1.$ To see that $||T_i||=1$, note that for all $i \geq 1$ we may find $\tau \in \SYT_{\infty}(\lambda)$ such that $i,i+1$ appear in the same row of $\tau.$ Namely, define $\tau$ by labeling the first row of $\lambda^{(\infty)}$ first with the labels $1,\dots,i,i+1$ and place the entries $\{i+2,\dots,i+|\lambda|+2\}$ in the row-standard orientation in $\lambda \subset \lambda^{(\infty)}$, and filling the remaining entries of $\lambda^{(\infty)}/\lambda^{(i+|\lambda|+2)}$ consecutively. Then $T_i(e_{\tau}) = e_{\tau}$ so indeed $||T_i|| = 1.$ The bound $||\epsilon_r|| \leq 1$ for all $r \geq 0$ follows from Lemma \ref{lem action of partial triv idem} and the simple fact that for all $r \geq n_{\lambda}$ and $(\mu,\beta) \in C_{\lambda,r}$, $||v_{\mu,\beta}|| = ||\epsilon_{r}^{(r+\lambda_1)}(e_{\gamma_{\mu,\beta}})|| \leq ||\epsilon_{r}^{(r+\lambda_1)}|| \cdot ||e_{\gamma_{\mu,\beta}}|| \leq 1$. To see that $||\epsilon_r|| = 1$ for $r \geq n_{\lambda}$ simply note that $\epsilon_r(e_{\tau_{\lambda}}) = e_{\tau_{\lambda}}.$
\end{proof} 

\begin{prop}\label{prop inf specht are alm-sym}
    The operators $T_i,\epsilon_r: \hat{S}_{\lambda} \rightarrow \hat{S}_{\lambda}$ define an almost-symmetric representation of $\widehat{\cH}_{\infty}.$
\end{prop}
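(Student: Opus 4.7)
The plan is to exploit the density of the algebraic direct limit $S_{\lambda^{(\infty)}} = \bigcup_{n \geq n_\lambda} S_{\lambda^{(n)}}$ inside $\hat{S}_\lambda$: by Lemma \ref{lem operators are bounded} the operators $T_i$ and $\epsilon_r$ are bounded, hence continuous, so it suffices to verify each defining relation of $\widehat{\cH}_\infty$ on basis vectors $e_\tau$ with $\tau \in \SYT_\infty(\lambda)$, where everything reduces to a finite-Hecke-algebra computation. The nontriviality condition $\epsilon_r \neq 0$ for some $r$ and the norm bounds $\|T_i\|, \|\epsilon_r\| \leq 1$ are already in Lemma \ref{lem operators are bounded}, so only the algebra relations remain to be checked.

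First, for the Hecke relations $(T_i-1)(T_i+t)=0$, $T_iT_{i+1}T_i = T_{i+1}T_iT_{i+1}$, and the far-commutation, I would fix a basis vector $e_\tau$ and observe that the formula for $T_i$ in Definition \ref{defn inf specht module closure} is identical to the explicit action on $S_{\lambda^{(n)}}$ given in Remark \ref{remark explicit action} for any $n$ larger than the largest index appearing nontrivially in the relation. Hence applying either side of a Hecke relation to $e_\tau$ produces a vector in $S_{\lambda^{(n)}}$ which can be computed inside the finite Hecke-Specht module, where equality holds by Proposition \ref{prop finite hecke irreps}. Continuity of finite sums and products of bounded operators then propagates equality from the dense subspace $S_{\lambda^{(\infty)}}$ to all of $\hat{S}_\lambda$.

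For the idempotent relations, the key lemma is Lemma \ref{lem stability of partial triv idem}, which says that for each $e_\tau$ the sequence $\epsilon_r^{(r+m)}(e_\tau)$ is eventually constant in $m$; thus $\epsilon_r(e_\tau) = \epsilon_r^{(r+m)}(e_\tau)$ for any sufficiently large $m$. To verify $\epsilon_k \epsilon_\ell(e_\tau) = \epsilon_{\min\{k,\ell\}}(e_\tau)$, I would first apply stability to replace $\epsilon_\ell(e_\tau)$ by a finite combination $\sum \alpha v_{\mu,\beta} \in S_{\lambda^{(r+m)}}$ for $m$ large, then apply $\epsilon_k$ and use stability again on each $v_{\mu,\beta}$; picking a common $m$ so large that both $\epsilon_k^{(r+m)}$ and $\epsilon_\ell^{(r+m)}$ are defined with the required commutations, the identity reduces to the finite-algebra identity $\epsilon_k^{(r+m)} \epsilon_\ell^{(r+m)} = \epsilon_{\min\{k,\ell\}}^{(r+m)}$ of Lemma \ref{lem idem relations}. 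The same strategy settles $\epsilon_k T_i = T_i \epsilon_k$ for $i \leq k-1$ and $\epsilon_k T_j = T_j \epsilon_k = \epsilon_k$ for $j \geq k+1$: in each case both sides, evaluated on $e_\tau$, lie in $S_{\lambda^{(n)}}$ for some finite $n$, and the desired identity is exactly the corresponding statement of Lemma \ref{lem idem relations}. Continuity then extends each identity from $S_{\lambda^{(\infty)}}$ to $\hat{S}_\lambda$.

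The main technical nuisance — and the only place where any care is needed — is book-keeping the threshold $n$ uniformly across the several terms involved in a relation, particularly for the braid relation and for $\epsilon_k \epsilon_\ell$, so that a single finite $\cH_n$-computation actually covers both sides. This is bounded-below by the ranks appearing in $\tau$ and by $k, \ell, i+1$, and since we are working one basis vector at a time the required bound is finite, so no genuine obstruction arises. Putting the pieces together yields an algebra homomorphism $\widehat{\cH}_\infty \to \cB(\hat{S}_\lambda)$ satisfying the norm bounds of Definition \ref{defn almost-symm} and with $\epsilon_{n_\lambda} \neq 0$, establishing the proposition.
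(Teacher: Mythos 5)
Your proposal is correct and follows essentially the same route as the paper: verify the relations on the dense subspace $S_{\lambda^{(\infty)}}$ (where everything reduces to finite Hecke algebra computations via Remark \ref{remark explicit action}, Lemma \ref{lem idem relations}, and the stability Lemma \ref{lem stability of partial triv idem}), then extend by continuity using the norm bounds of Lemma \ref{lem operators are bounded} and note $\epsilon_{n_\lambda}\neq 0$. The paper compresses the on-the-dense-subspace verification into a single sentence; you have simply filled in the details it leaves to the reader.
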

\begin{proof}
    It suffices to check that the operators $T_i,\epsilon_r$ define a representation of $\cH_{\infty}$ on $\hat{S}_{\lambda}.$ However, it is clear from Definition \ref{defn inf specht module closure} by restricting to $S_{\lambda^{(\infty)}}$, that the operators $T_i$,$\epsilon_{r}$ satisfy the relations of $\widehat{\cH}_{\infty}$ on $S_{\lambda^{(\infty)}}$. Thus, from the norm bounds in Lemma \ref{lem operators are bounded} we see that they indeed define an action of $\widehat{\cH}_{\infty}$ which is almost-symmetric since $\epsilon_{n_{\lambda}} \neq 0$.
\end{proof}

We now show that the $\hat{S}_{\lambda}$ are irreducible representations of $\widehat{\cH}_{\infty}.$ First, we need the following technical result which guarantees that no non-zero vector in $\hat{S}_{\lambda}$ can be annihilated by every $\epsilon_r$ operator for $r \geq n_{\lambda}.$ We use an analytic approach to greatly simplify the argument.

\begin{lem}\label{lem intersection of ker 0}
   For all $\lambda \in \Par$, as operators on $\hat{S}_{\lambda},$
   $$\bigcap_{r \geq n_{\lambda}} \mathrm{Ker} (\epsilon_r) = 0.$$
\end{lem}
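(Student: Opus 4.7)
The plan is to argue by contrapositive: given any nonzero $v = \sum_\tau c_\tau e_\tau \in \hat{S}_\lambda$, I will exhibit some $r \geq n_\lambda$ with $\epsilon_r(v) \neq 0$. Choose any $\tau_0 \in \SYT_\infty(\lambda)$ with $c_{\tau_0} \neq 0$ (possible since $v \neq 0$) and take $r \geq \max\{n_\lambda, \mathrm{rk}(\tau_0)\}$. The goal is then to expand $\epsilon_r(v)$ in the finite basis $\{v_{\mu,\beta} : (\mu,\beta) \in C_{\lambda,r}\}$ of $\epsilon_r(\hat{S}_\lambda)$ supplied by Corollary \ref{cor stable partial inv basis} and extract a nonzero coefficient at the pair associated to $\tau_0$.

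The first ingredient is an explicit identity: for every $\tau \in \SYT_\infty(\lambda)$ with $\mathrm{rk}(\tau) \leq r$, one has $\epsilon_r(e_\tau) = e_\tau$. Indeed, such a $\tau$ has its entries $\{r+1, \ldots, r+\lambda_1\}$ lying consecutively in the rightmost $\lambda_1$ cells of the first row of $\lambda^{(r+\lambda_1)}$, so each $T_j$ with $r+1 \leq j \leq r+\lambda_1-1$ acts trivially on $e_\tau$. The symmetrizer $\epsilon_r^{(r+m)}$ then fixes $e_\tau$ for every $m \geq \lambda_1$ (the scalar normalization reduces to $1$ via $\sum_{\sigma \in \mathfrak{S}_m} t^{\binom{m}{2} - \ell(\sigma)} = [m]_t!$), and Lemma \ref{lem stability of partial triv idem} promotes this to $\epsilon_r(e_\tau) = e_\tau$. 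Reading this through Definition \ref{defn partial inv basis}, we find $v_{\mu_r(\tau), \beta_r(\tau)} = e_\tau$, where $\mu_r(\tau)$ is the rightmost $\lambda_1$ boxes of row $1$ of $\lambda^{(r+\lambda_1)}$ (the same horizontal strip for every $\tau$ with $\mathrm{rk}(\tau) \leq r$) and $\beta_r(\tau) = \tau|_{\lambda^{(r)}}$; in particular the scalar $\alpha_r(\tau_0)$ from Lemma \ref{lem action of partial triv idem} is equal to $1$.

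The second and crucial ingredient is a uniqueness statement: the only $\tau' \in \SYT_\infty(\lambda)$ satisfying $(\mu_r(\tau'), \beta_r(\tau')) = (\mu_r(\tau_0), \beta_r(\tau_0))$ is $\tau' = \tau_0$ itself. Any such candidate must agree with $\tau_0$ on $\lambda^{(r)}$ (dictated by $\beta$), must place $\{r+1, \ldots, r+\lambda_1\}$ in exactly the designated end-of-row-$1$ cells in row-standard order (this order is forced since those cells all sit in one row), and must then fill the remaining tail $\lambda^{(\infty)} \setminus \lambda^{(r+\lambda_1)}$ with $\{r+\lambda_1+1, r+\lambda_1+2, \ldots\}$ in the unique consecutive row-standard way. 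Combining this uniqueness with $\alpha_r(\tau_0) = 1$ and expanding $\epsilon_r(v) = \sum_\tau c_\tau \alpha_r(\tau) v_{\mu_r(\tau), \beta_r(\tau)}$ in the $v_{\mu,\beta}$-basis, the coefficient at $v_{\mu_r(\tau_0), \beta_r(\tau_0)}$ is precisely $c_{\tau_0} \neq 0$, contradicting $\epsilon_r(v) = 0$. The main obstacle is the uniqueness claim; everything else is bookkeeping, and the uniqueness rests essentially on the infinite-row geometry of $\lambda^{(\infty)}$ rigidly constraining where all sufficiently large entries must sit.
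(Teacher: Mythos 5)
Your argument is correct, but it takes a genuinely different route from the paper's. The paper proves the lemma with a three-line density argument: given $v \in \bigcap_{r}\mathrm{Ker}(\epsilon_r)$ and $c>0$, pick $v_0 \in S_{\lambda^{(r)}}$ with $\|v-v_0\|<c$ (density of $S_{\lambda^{(\infty)}}$ in $\hat{S}_{\lambda}$), observe that $\epsilon_r(v_0)=v_0$, and conclude $\|v_0\| = \|\epsilon_r(v_0)-\epsilon_r(v)\| \leq \|\epsilon_r\|\cdot\|v_0-v\| < c$, whence $\|v\| \leq \max\{\|v-v_0\|,\|v_0\|\} < c$ by the ultrametric inequality; since $c$ was arbitrary, $v=0$. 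The only structural inputs there are $\|\epsilon_r\|\leq 1$ and the fact that $\epsilon_r$ fixes $S_{\lambda^{(r)}}$ pointwise --- the same normalization computation $\sum_{\sigma\in\mathfrak{S}_m}t^{\binom{m}{2}-\ell(\sigma)}=[m]_t!$ that you carry out for $e_{\tau_0}$. Your proof replaces the approximation step with an explicit coefficient extraction in the basis $\{v_{\mu,\beta}\}$, and the load-bearing step, your uniqueness claim, does hold: the pair $(\mu_r(\tau),\beta_r(\tau))$ is determined by where the entries $\{1,\dots,r\}$ sit in $\tau$, and once those entries occupy all of $\lambda^{(r)}$ (as they do for any $\tau'$ in the fiber over $(\mu_r(\tau_0),\beta_r(\tau_0))$, since $r\geq\mathrm{rk}(\tau_0)$), the remaining entries are forced into the first-row tail in increasing order, so the fiber is the singleton $\{\tau_0\}$ and the coordinate of $\epsilon_r(v)$ at $v_{\mu_r(\tau_0),\beta_r(\tau_0)}$ is exactly $c_{\tau_0}\alpha_r(\tau_0)=c_{\tau_0}\neq 0$. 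The one point you wave at as ``bookkeeping'' that deserves a sentence is the interchange of the convergent sum $\sum_{\tau}c_{\tau}\epsilon_r(e_{\tau})$ with the extraction of a single coordinate: this is legitimate because the image lies in the finite-dimensional, hence closed, span of the $v_{\mu,\beta}$, where coordinate functionals are continuous, and $|\alpha_r(\tau)|\leq 1$ (Lemma \ref{lem action of partial triv idem}) keeps the resulting coefficient series convergent. What the paper's argument buys is brevity and complete independence from the combinatorics of $C_{\lambda,r}$; what yours buys is more explicit information, namely a concrete nonvanishing coordinate of $\epsilon_r(v)$ for any $r$ dominating the rank of some tableau in the support of $v$.
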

\begin{proof}
    Let $v \in \bigcap_{r \geq n_{\lambda}} \mathrm{Ker} (\epsilon_r)$ and $c>0.$ Since $S_{\lambda^{(\infty)}}$ is dense in $\hat{S}_{\lambda},$ we may find some $v_0 \in S_{\lambda^{(\infty)}}$ such that $||v-v_0|| < c.$ Let $r \geq n_{\lambda}$ be such that $v_0 \in S_{\lambda^{(r)}}$. Then $\epsilon_r(v_0) = v_0.$ Now we see that 
    $$||v_0|| = ||v_0-0|| = ||\epsilon_r(v_0) - \epsilon_r(v)|| \leq ||\epsilon_r||\cdot ||v_0-v|| = ||v_0-v|| < c.$$ Thus 
    $$||v|| = ||v-v_0 + v_0|| \leq \max\{ ||v-v_0||, ||v_0|| \} < c.$$ Since $||v||< c$ for all $c > 0$, $||v|| = 0$ and so $v=0.$
\end{proof}

Using a combination of combinatorial and analytic arguments, it is not hard to show that the $\widehat{\cH}_{\infty}$ modules $\hat{S}_{\lambda}$ are irreducible.

\begin{prop}\label{prop inf specht are irred}
    For all $\lambda \in \Par,$ $\hat{S}_{\lambda}$ is an irreducible $\widehat{\cH}_{\infty}$-module.
\end{prop}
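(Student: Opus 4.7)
The plan is to take an arbitrary nonzero closed $\widehat{\cH}_{\infty}$-invariant subspace $W \subseteq \hat{S}_{\lambda}$ and show it equals $\hat{S}_{\lambda}$ by proving it contains the dense subspace $S_{\lambda^{(\infty)}}$. The key ingredients will be Lemma \ref{lem intersection of ker 0} (to push a vector into a partially-trivial subspace), Lemma \ref{lem action of partial triv idem} together with continuity of $\epsilon_r$ (to land in a finite-dimensional subspace of a finite Specht module), and the irreducibility of each finite Hecke-Specht module (Proposition \ref{prop finite hecke irreps}) to propagate the inclusion.

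First I would pick a nonzero $w \in W$. By Lemma \ref{lem intersection of ker 0}, there exists some $r \geq n_{\lambda}$ with $\epsilon_r(w) \neq 0$, and by invariance $\epsilon_r(w) \in W$. Next, I claim $\epsilon_r(w) \in S_{\lambda^{(r+\lambda_1)}}$. Indeed, for any $\tau \in \SYT_{\infty}(\lambda)$, Lemmas \ref{lem action of partial triv idem} and \ref{lem stability of partial triv idem} give $\epsilon_r(e_{\tau}) = \alpha_r(\tau) v_{\mu,\beta}$ for some $(\mu,\beta) \in C_{\lambda,r}$ with $|\alpha_r(\tau)| \leq 1$. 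Writing $w = \sum_{\tau} c_{\tau} e_{\tau}$ with $c_{\tau} \to 0$, the sum
\[
\epsilon_r(w) = \sum_{(\mu,\beta) \in C_{\lambda,r}} \Bigl( \sum_{\tau : \epsilon_r(e_\tau) \in \bF v_{\mu,\beta}} c_{\tau} \alpha_r(\tau) \Bigr) v_{\mu,\beta}
\]
converges (each inner sum is absolutely convergent since $|\alpha_r(\tau)| \leq 1$) and lies in the finite-dimensional span of $\{v_{\mu,\beta} : (\mu,\beta) \in C_{\lambda,r}\} \subseteq S_{\lambda^{(r+\lambda_1)}}$.

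Now $\epsilon_r(w)$ is a nonzero vector of the finite-dimensional module $S_{\lambda^{(r+\lambda_1)}}$, which is irreducible as an $\cH_{r+\lambda_1}$-module by Proposition \ref{prop finite hecke irreps}. Since $W$ is invariant under the subalgebra $\cH_{r+\lambda_1} \subseteq \widehat{\cH}_{\infty}$, we conclude $S_{\lambda^{(r+\lambda_1)}} \subseteq W$. Inductively, for each $n \geq r+\lambda_1$, the module $S_{\lambda^{(n)}}$ is an irreducible $\cH_n$-module containing the nonzero $\cH_n$-invariant subspace $S_{\lambda^{(r+\lambda_1)}} \subseteq W$, hence $S_{\lambda^{(n)}} \subseteq W$. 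Taking the union yields $S_{\lambda^{(\infty)}} \subseteq W$, and since $W$ is closed and $S_{\lambda^{(\infty)}}$ is dense in $\hat{S}_{\lambda}$ by construction, $W = \hat{S}_{\lambda}$.

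The main subtlety, and what I expect to be the only nontrivial step, is verifying that $\epsilon_r(w)$ truly lies inside the finite-dimensional space spanned by the $v_{\mu,\beta}$'s rather than in some more elusive closure: this is where the uniform bound $|\alpha_r(\tau)| \leq 1$ from Lemma \ref{lem action of partial triv idem} is essential, since it both guarantees convergence of the coefficient sums and confines the image to a finite-dimensional (hence automatically closed) subspace of $\hat{S}_{\lambda}$. Everything else reduces to the finite-dimensional irreducibility of the Hecke-Specht modules $S_{\lambda^{(n)}}$ combined with density of $S_{\lambda^{(\infty)}}$.
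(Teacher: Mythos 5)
Your proposal is correct and follows essentially the same route as the paper's proof: use Lemma \ref{lem intersection of ker 0} to find $r$ with $\epsilon_r(w)\neq 0$, use Lemma \ref{lem action of partial triv idem} to place $\epsilon_r(w)$ in the finite-dimensional irreducible $\cH_{r+\lambda_1}$-module $S_{\lambda^{(r+\lambda_1)}}$, and then invoke density of $S_{\lambda^{(\infty)}}$. Your extra verification that $\epsilon_r(w)$ lands in the span of the $v_{\mu,\beta}$ for an arbitrary (not just basis) vector is a reasonable filling-in of a detail the paper leaves implicit, but the argument is the same.
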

\begin{proof}
   Set $n_{\lambda}:= |\lambda| + \lambda_1$. Let $v \in \hat{S}_{\lambda} \setminus \{0\}.$ Then by Lemma \ref{lem intersection of ker 0} there exists $r \geq n_{\lambda}$ such that $\epsilon_r(v) \neq 0.$ By Lemma \ref{lem action of partial triv idem}, $\epsilon_r(v) \in S_{\lambda^{(r+\lambda_1)}}.$  Since $S_{\lambda^{(r+\lambda_1)}}$ is an irreducible $\cH_{r+\lambda_1}$-module we can find some $X \in \cH_{r+\lambda_1}$ such that $X \epsilon_r(v) = e_{\tau_{\lambda}}$ where $\tau_{\lambda}$ is the column standard element of $\SYT_{\infty}(\lambda).$ We know that $\cH_{\infty}.e_{\tau_{\lambda}} = S_{\lambda^{(\infty)}}$ is norm dense in $\hat{S}_{\lambda}.$ Therefore, $\widehat{\cH}_{\infty}.v = \widehat{\cH}_{\infty}.e_{\tau_{\lambda}}$ is also norm dense in $\hat{S}_{\lambda}.$ Thus if $V \subseteq \hat{S}_{\lambda}$ is a nonzero closed $\widehat{\cH}_{\infty}$-submodule of $\hat{S}_{\lambda}$, then necessarily $V = \hat{S}_{\lambda}$, i.e., $\hat{S}_{\lambda}$ is irreducible.
\end{proof}

\subsection{Partial Classification of Irreducible Representations}

In this section, we prove a partial classification of the almost-symmetric representations of $\widehat{\cH}_{\infty}.$ Namely, we prove that every irreducible almost-symmetric $\widehat{\cH}_{\infty}$-module contains some $\hat{S}_{\lambda}$ as a dense submodule. We roughly use the following argument. First, show that the presence of non-zero partially-invariant vectors in $\cH_{\infty}$-modules guarantees an entire copy of some module $S_{\lambda^{(\infty)}}.$ Then, using an analytic argument, we argue that if an irreducible almost-symmetric $\widehat{\cH}_{\infty}$-module contains a full copy of some $S_{\lambda^{(\infty)}}$, then it in fact admits a non-zero bounded module map $\hat{S}_{\lambda} \rightarrow V$ with dense image. This gives a partial classification of the irreducible almost-symmetric $\widehat{\cH}_{\infty}$-modules.

We start this section with a two combinatorial lemmas.

\begin{lem}\label{lem dominance criterion}
    Let $k,j \geq 0$. Suppose $V$ is a $\cH_{k+j}$-module and $v \in V \setminus \{0\}$ is $T_i$-invariant for $k+1 \leq i \leq k+j-1.$ Then any $S_{\lambda}$ which admits a $\cH_{k+j}$-module injection into $\cH_{k+j}.v$ satisfies $(j,1^k) \trianglelefteq \lambda.$
\end{lem}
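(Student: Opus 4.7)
The plan is to bring the hypothesis on $v$ into contact with the Young subalgebra $\cH_{k,j} = \cH_k \otimes \cH_j \subset \cH_{k+j}$, where $\cH_k$ is generated by $T_1,\dots,T_{k-1}$ and the $\cH_j$ factor is generated by $T_{k+1},\dots,T_{k+j-1}$. The invariance $T_i v = v$ for $k+1 \le i \le k+j-1$ says precisely that $\cH_j$ acts on $v$ through the trivial character. Because $\cH_k$ and $\cH_j$ commute inside $\cH_{k+j}$, the subspace $W := \cH_k . v$ is then a $\cH_{k,j}$-submodule on which $\cH_j$ still acts trivially, i.e., as an $\cH_{k,j}$-module it is isomorphic to $W \otimes \mathrm{triv}(j)$ with the $W$ on the right viewed purely as an $\cH_k$-module. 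From this I would obtain a surjection of $\cH_{k+j}$-modules
\[
\mathrm{Ind}_{\cH_{k,j}}^{\cH_{k+j}}\bigl(W \otimes \mathrm{triv}(j)\bigr) \twoheadrightarrow \cH_{k+j}.v.
\]

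Next, by semisimplicity of $\cH_k$ (Theorem \ref{AST thm}) I would decompose $W \cong \bigoplus_\mu S_\mu^{\oplus m_\mu}$ over partitions with $|\mu| = k$. If $S_\lambda$ embeds into $\cH_{k+j}.v$, then semisimplicity of $\cH_{k+j}$ forces $S_\lambda$ to appear as a summand of the induced module displayed above, so that
\[
\mathrm{Hom}_{\cH_{k+j}}\bigl(S_\lambda,\, \mathrm{Ind}_{\cH_{k,j}}^{\cH_{k+j}}(S_\mu \otimes \mathrm{triv}(j))\bigr) \neq 0
\]
for some $\mu$ with $m_\mu > 0$. By Frobenius reciprocity this equals $\mathrm{Hom}_{\cH_{k,j}}(\mathrm{Res}\, S_\lambda,\, S_\mu \otimes \mathrm{triv}(j))$, which by Pieri's rule for Hecke-Specht modules (obtained by iterating the single-box branching of Remark \ref{remark branching}) is nonzero precisely when $\lambda/\mu$ is a horizontal $j$-strip. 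The problem therefore reduces to the purely combinatorial claim: whenever $|\mu|=k$, $|\lambda|=k+j$, and $\lambda/\mu$ is a horizontal $j$-strip, then $(j,1^k) \trianglelefteq \lambda$.

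For this final step I would pass to conjugates. The horizontal strip condition is equivalent to $\lambda'_i - \mu'_i \in \{0,1\}$ for every $i \ge 1$. Summing over $i = 1,\dots,r$ gives
\[
\lambda'_1 + \cdots + \lambda'_r \;\le\; (\mu'_1 + \cdots + \mu'_r) + r \;\le\; k + r
\]
for all $r \ge 1$. Comparing this with $(k+1, 1^{j-1}) = (j, 1^k)'$ yields $\lambda' \trianglelefteq (j,1^k)'$, and since conjugation reverses the dominance order this is exactly $(j, 1^k) \trianglelefteq \lambda$. The main obstacle I anticipate is pinning down the correct form of Pieri's rule in this setting, but given semisimplicity of $\cH_{k+j}$ and the branching rule already recorded in Remark \ref{remark branching}, this is a routine consequence; the rest of the argument is essentially bookkeeping.
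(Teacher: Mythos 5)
Your argument is correct, but it takes a different route from the paper's. The paper observes that $\bF.v \cong \mathrm{triv}(1^k,j)$ as a $\cH_{1^k,j}$-module, so Frobenius reciprocity gives a surjection $\mathrm{Ind}_{\cH_{1^k,j}}^{\cH_{k+j}}\mathrm{triv}(1^k,j) \twoheadrightarrow \cH_{k+j}.v$, and then quotes Young's rule: this induced (permutation) module decomposes as $\bigoplus_{\lambda} K_{\lambda,(j,1^k)} S_{\lambda}$, so the conclusion follows from the standard fact that $K_{\lambda\mu} \neq 0$ forces $\lambda \trianglerighteq \mu$. You instead pass through the intermediate module $W = \cH_k.v$, decompose it over the semisimple $\cH_k$, and reduce to the Pieri rule for $\mathrm{Ind}_{\cH_{k,j}}^{\cH_{k+j}}(S_\mu \otimes \mathrm{triv}(j))$; your closing computation with conjugates (partial sums of $\lambda'$ bounded by $k+r$, compared against $(j,1^k)' = (k+1,1^{j-1})$, using that conjugation reverses dominance) is a correct and fully explicit replacement for the Kostka-number fact. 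The trade-off: your version is longer and leans on the Pieri rule, which is not literally an iteration of the single-box branching in Remark \ref{remark branching} --- iterating that only gives the $\cH_k \otimes \cH_{1^j}$-restriction with multiplicity $|\SYT(\lambda/\mu)|$, and one still needs to identify the $\cH_j$-action on the multiplicity space to extract the horizontal-strip condition --- but it is a standard fact for semisimple Hecke algebras and no harder to justify than the Young's rule the paper invokes. In exchange, your argument pinpoints more precisely which $\lambda$ can occur (those obtained from a constituent $\mu$ of $\cH_k.v$ by adding a horizontal $j$-strip), which is strictly more information than the dominance bound the lemma asks for.
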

\begin{proof}
Let $U:= \cH_{k+j}.v . $ Since $v$ is $T_i$-invariant for $k+1 \leq i \leq k+j-1$, by Frobenius reciprocity there exists a $\cH_{k+j}$-module surjection $\mathrm{Ind}_{\cH_{1^k,j}}^{\cH_{k+j}} \mathrm{triv}(1^k,j) \rightarrow U.$ Using the branching rules from Remark \ref{remark branching}, we see that 
$$\mathrm{Ind}_{\cH_{1^k,j}}^{\cH_{k+j}} \mathrm{triv}(1^k,j)  \cong \bigoplus_{\lambda \vdash k+j} K_{\lambda,(j,1^k)} S_{\lambda}$$ where $K_{\lambda,\mu}$ are the \textbf{Kostka numbers}. Thus if $K_{\lambda,(j,1^k)} \neq 0$, then $\lambda \trianglerighteq (j,1^k).$
\end{proof}

\begin{lem}\label{lem rep stability for inf hecke}
    Let $k \geq 0.$ Suppose $\mu_{k} \subset \mu_{k+1} \subset \dots$ is an increasing sequence of partitions such that for all $j \geq 0$, $|\mu_{k+j}| = k+j$ and $\mu_{k+j} \trianglerighteq (j,1^k).$ Then there exists a partition $\lambda$ with $|\lambda| \leq k$ such that for all sufficiently large $j,$ $\lambda^{(k+j)} = \mu_{k+j}.$
\end{lem}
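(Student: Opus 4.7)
The plan is to exploit the fact that the only content of the dominance hypothesis that actually matters is the first inequality, namely $(\mu_{k+j})_1 \geq j$. Combined with $|\mu_{k+j}| = k+j$, this forces the ``tail'' of $\mu_{k+j}$ below its first row to have bounded size. A non-decreasing sequence of bounded-size partitions must stabilize, and the stable value will be the desired $\lambda$.

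Concretely, for each $j \geq 0$ let $\nu_{k+j} := ((\mu_{k+j})_2, (\mu_{k+j})_3, \ldots)$ denote the partition obtained from $\mu_{k+j}$ by deleting its first row. The dominance condition $\mu_{k+j} \trianglerighteq (j, 1^k)$ implies $(\mu_{k+j})_1 \geq j$, hence
\[
|\nu_{k+j}| = |\mu_{k+j}| - (\mu_{k+j})_1 \leq (k+j) - j = k.
\]
So every $\nu_{k+j}$ lies in the finite set of partitions of size at most $k$.

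Next, I would observe that the chain $\mu_{k} \subset \mu_{k+1} \subset \cdots$ translates into a chain $\nu_{k} \subseteq \nu_{k+1} \subseteq \cdots$: indeed, the containment $\mu_{k+j} \subseteq \mu_{k+j+1}$ of Young diagrams means $(\mu_{k+j})_i \leq (\mu_{k+j+1})_i$ for every $i$, and restricting to $i \geq 2$ gives $\nu_{k+j} \subseteq \nu_{k+j+1}$. The sizes $|\nu_{k+j}|$ thus form a non-decreasing sequence bounded above by $k$, so they stabilize at some value for $j$ large; and once $|\nu_{k+j}| = |\nu_{k+j+1}|$, the containment $\nu_{k+j} \subseteq \nu_{k+j+1}$ forces equality. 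Hence there exists a partition $\lambda$ with $|\lambda| \leq k$ and an integer $j_0$ with $\nu_{k+j} = \lambda$ for all $j \geq j_0$.

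Finally, for $j \geq \max\{j_0, \lambda_1\}$, the first part satisfies
\[
(\mu_{k+j})_1 = (k+j) - |\lambda| \geq j \geq \lambda_1,
\]
so $\mu_{k+j} = (k+j - |\lambda|, \lambda_1, \lambda_2, \ldots) = \lambda^{(k+j)}$, as required. There is no real obstacle here; the only mild subtlety is remembering that a weakly increasing sequence of Young diagrams with bounded total size must stabilize (because strict inclusion strictly increases size), and checking at the end that $(\mu_{k+j})_1 \geq \lambda_1$ so the result is genuinely of the form $\lambda^{(k+j)}$ in the sense of Definition \ref{defn inf specht module}.
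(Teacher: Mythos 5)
Your proof is correct and follows essentially the same route as the paper: both extract $(\mu_{k+j})_1 \geq j$ from the dominance hypothesis, conclude the part of $\mu_{k+j}$ below the first row has at most $k$ boxes, and deduce stabilization of an increasing chain of bounded-size partitions. Your version is slightly more careful in spelling out the stabilization and in checking $(\mu_{k+j})_1 \geq \lambda_1$ at the end, but the idea is identical.
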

\begin{proof}
    Since $(\mu_{2k})_1 \geq k$ we may choose $\lambda$ to be the unique partition $\lambda$ with $|\lambda| \leq k$ such that $\mu_{2k} = \lambda^{(2k)}.$ The condition $\mu_{k+j} \trianglerighteq (j,1^k)$ tells us that $(\mu_{k+j})_1 \geq j.$ This means that there are at most $k$ boxes of every $\mu_{k+j}$ not in the first row of the diagram. By the pigeonhole principle, as the chain of partitions $\mu_{2k} \subset \mu_{2k+1}\subset \dots$ grows, there must be only finitely many boxes which are added to rows which are not the first row. Therefore, there exists some partition $\lambda'$ with $|\lambda'| \leq k$ such that for all sufficiently large $j$, $\mu_{k+j} = (\lambda')^{(k+j)}.$
\end{proof}

The above combinatorial lemmas have the following representation-theoretic consequence.

\begin{prop}\label{prop cyclic vect give specht mods}
    If $V$ is a $\cH_{\infty}$-module and $v \in V \setminus \{0\}$ is $T_i$-invariant for all $i> k$, then the cyclic submodule $\cH_{\infty}.v \subseteq V$ contains a submodule isomorphic to $S_{\lambda^{(\infty)}}$ for some partition $\lambda$ with $|\lambda| \leq k.$
\end{prop}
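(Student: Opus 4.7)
\emph{Plan.} For each $j\ge 0$ set $U_j := \cH_{k+j}.v$, so $U_0\subseteq U_1\subseteq\cdots$ exhausts $\cH_\infty.v$. By Theorem~\ref{AST thm} each $U_j$ is semisimple as a $\cH_{k+j}$-module, and since $v$ is $T_i$-invariant for all $k+1\le i\le k+j-1$, Lemma~\ref{lem dominance criterion} forces every irreducible constituent $S_\mu$ of $U_j$ to satisfy $\mu\trianglerighteq (j,1^k)$.

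I next inductively construct a chain of partitions $\mu_k\subset\mu_{k+1}\subset\cdots$ with $|\mu_{k+j}|=k+j$, together with a chosen embedding $\iota_j:S_{\mu_{k+j}}\hookrightarrow U_j$ at each stage. Starting from any $\iota_0$, suppose $\iota_j$ is given; view its image inside $U_{j+1}$ and project onto each $\cH_{k+j+1}$-isotypic summand. At least one projection is nonzero, and by the branching rules (Remark~\ref{remark branching}) the corresponding partition $\nu=:\mu_{k+j+1}$ must contain $\mu_{k+j}$. Since $\mu_{k+j+1}/\mu_{k+j}$ is a single box, $S_{\mu_{k+j}}$ occurs with multiplicity one in $\Res_{\cH_{k+j}}^{\cH_{k+j+1}}S_{\mu_{k+j+1}}$, so the $\cH_{k+j+1}$-submodule of $U_{j+1}$ generated by the nonzero projection is a single copy of $S_{\mu_{k+j+1}}$, defining $\iota_{j+1}$. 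This chain meets the hypotheses of Lemma~\ref{lem rep stability for inf hecke}, producing a partition $\lambda$ with $|\lambda|\le k$ and a threshold $j_0$ such that $\mu_{k+j}=\lambda^{(k+j)}$ for all $j\ge j_0$.

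For $j\ge j_0$ the added box $\lambda^{(k+j+1)}/\lambda^{(k+j)}$ lies at the end of the first row, so the canonical multiplicity-one branching inclusion $S_{\lambda^{(k+j)}}\hookrightarrow S_{\lambda^{(k+j+1)}}$ is precisely the inclusion used to define $S_{\lambda^{(\infty)}}=\bigcup_j S_{\lambda^{(k+j)}}$ in Definition~\ref{defn inf specht module}. To assemble the $\iota_j$'s into a compatible family whose colimit is an injection $S_{\lambda^{(\infty)}}\hookrightarrow\cH_\infty.v$, I adjust the choices in the stable tail so that the $S_{\lambda^{(k+j)}}$-branching summand of $\iota_{j+1}(S_{\lambda^{(k+j+1)}})$ coincides with $\iota_j(S_{\lambda^{(k+j)}})$ exactly, rather than merely up to the isotypic projection. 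The compatible family then defines the desired injective $\cH_\infty$-equivariant map $S_{\lambda^{(\infty)}}\hookrightarrow\cH_\infty.v$.

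The main obstacle is precisely this compatibility adjustment: a priori, projecting $\iota_j(S_{\mu_{k+j}})$ onto the $\mu_{k+j+1}$-isotypic to build $\iota_{j+1}$ can shrink the copy to a proper subspace, so iterated branching summands need not reproduce $\iota_j$. Multiplicity-one single-box branching together with stability of $\mu_{k+j}=\lambda^{(k+j)}$ on the tail are the combinatorial inputs that allow the recursive adjustment to succeed inside the finite-dimensional multiplicity spaces and produce a coherent embedding into $\cH_\infty.v$.
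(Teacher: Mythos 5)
Your overall route is the same as the paper's: filter $\cH_{\infty}.v$ by $U_j=\cH_{k+j}.v$, use Lemma \ref{lem dominance criterion} to force every irreducible constituent of $U_j$ to dominate $(j,1^k)$, extract an increasing chain of partitions, and feed it to Lemma \ref{lem rep stability for inf hecke}. The one genuinely delicate point is the one you flag yourself: producing copies of the $S_{\mu_{k+j}}$ that literally nest. Your resolution is not carried out, and as described it runs in the wrong direction. A copy $\iota_j(S_{\mu_{k+j}})$ may project nontrivially onto several isotypic components of $U_{j+1}$ (it sits ``diagonally''), so it need not be contained in the copy $\iota_{j+1}(S_{\mu_{k+j+1}})$ generated by one projection; to force the branching summand of $\iota_{j+1}$ to \emph{equal} $\iota_j$ you must redefine $\iota_j$ in terms of $\iota_{j+1}$, and each later compatibility fix invalidates the earlier one, so you are implicitly taking an inverse limit of choices in growing multiplicity spaces with no stabilization or compactness available over $\bF$. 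Multiplicity-one branching alone does not close this. (To be fair, the paper's own proof simply asserts the existence of the nested sub-filtration at this exact point.)

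The gap can be closed by a forward construction with a maximality choice rather than backward adjustment. For $j\geq k$ the dominance bound forces every constituent of $U_j$ to have the form $S_{\nu^{(k+j)}}$ with $|\nu|\leq k$; let $\Lambda_j$ be the set of tails $\nu$ occurring. The branching rules of Remark \ref{remark branching} show that if $\nu\in\Lambda_j$ then $\Lambda_{j+1}$ contains $\nu$ or some $\nu''\gtrdot\nu$. Choose $\lambda$ of maximal size among tails lying in infinitely many $\Lambda_j$; then there is a $J$ such that for all $j\geq J$ the set $\Lambda_j$ contains $\lambda$ and no tail of larger size. Now take any copy $W_J\cong S_{\lambda^{(k+J)}}$ inside $U_J$. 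Its projections onto the isotypic components of $U_{J+1}$ can be nonzero only for the tails $\lambda$ or $\nu''\gtrdot\lambda$, and the latter do not occur, so $W_J$ lies entirely inside the $S_{\lambda^{(k+J+1)}}$-isotypic component $S_{\lambda^{(k+J+1)}}\otimes M$; by multiplicity-one branching $W_J=S_{\lambda^{(k+J)}}\otimes c$ for a line $c\subseteq M$, whence $W_{J+1}:=\cH_{k+J+1}.W_J=S_{\lambda^{(k+J+1)}}\otimes c$ is a single copy genuinely containing $W_J$. Iterating gives the nested chain whose union is $S_{\lambda^{(\infty)}}$. Replacing your ``recursive adjustment'' paragraph with this (or an equivalent forward argument) makes the proof complete.
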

\begin{proof}
    For $j \geq 0$ define $U_{k+j}:= \cH_{k+j}.v$ which form a filtration 
    $$U_{k} \subseteq U_{k+1} \subseteq \dots.$$ By decomposing each $U_{k+j}$ into irreducible $\cH_{k+j}$-submodules, we can find a sequence of partitions $(\mu_{k+j})_{j \geq 0}$ such that 
    \begin{itemize}
        \item for all $j \geq 0$, $|\mu_{k+j}| = k+j$
        \item $\mu_{k} \subset \mu_{k+1}\subset \dots$
        \item $S_{\mu_k} \subseteq S_{\mu_{k+1}}\subseteq \dots$ forms a sub-filtration of $U_{k} \subseteq U_{k+1} \subseteq \dots.$
    \end{itemize}

    By Lemma \ref{lem dominance criterion}, we know that for all $j \geq 0,$ $\mu_{k+j} \trianglerighteq (j,1^k).$ Thus by Lemma \ref{lem rep stability for inf hecke}, it follows that there exists some $\lambda$ with $|\lambda| \leq k$ such that for all $j \gg 0,$ $\mu_{k+j} = \lambda^{(k+j)}$. Thus we have a constructed a $\cH_{\infty}$-submodule of $\cH_{\infty}.v$ of the form 
    $ S_{\lambda^{(\infty)}} = \bigcup_{j \gg 0} S_{\lambda^{(k+j)}}.$
\end{proof}

We require the next technical analytic lemma.

\begin{lem}\label{lem extension of reps}
    Let $V$ be an almost-symmetric $\widehat{\cH}_{\infty}$-module and suppose $f: S_{\lambda^{(\infty)}} \rightarrow V$ is an injective $\cH_{\infty}$-module map. Then f extends uniquely to a non-zero bounded $\widehat{\cH}_{\infty}$-module map $\hat{f}: \hat{S}_{\lambda} \rightarrow V.$
\end{lem}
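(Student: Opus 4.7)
The plan is to bound $f$ on the dense subspace $S_{\lambda^{(\infty)}} \subseteq \hat{S}_{\lambda}$, extend by continuity, and then separately verify $\widehat{\cH}_{\infty}$-equivariance, injectivity, and closedness of the image. First I would establish boundedness of $f$ by induction along the poset $\SYT_{\infty}(\lambda)$, whose minimum is the column-standard tableau $\tau_{\lambda}$. For a covering relation $\tau'' = s_i(\tau) > \tau$, Remark \ref{remark explicit action} gives
\[
e_{\tau''} \;=\; T_i(e_\tau) \,-\, \frac{1-t}{1-t^{c_{\tau}(i+1)-c_{\tau}(i)}}\, e_\tau,
\]
with $c_{\tau}(i+1) - c_{\tau}(i) \geq 2$, so that $|1-t^{c_{\tau}(i+1)-c_{\tau}(i)}| = 1$ and the coefficient has $\bF$-norm at most $1$. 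Combined with $\|T_i\|_V \leq 1$ from the almost-symmetric hypothesis, this yields $\|f(e_{\tau''})\|_V \leq \|f(e_\tau)\|_V$, so by induction $\|f(e_\tau)\|_V \leq \|f(e_{\tau_\lambda})\|_V =: C$ for every $\tau$. Since $\{e_\tau\}$ is an orthogonal basis of $\hat{S}_\lambda$ in the ultrametric sense, $f$ is bounded with $\|f\| \leq C$, and extends uniquely by continuity to a nonzero bounded linear map $\hat{f}\colon \hat{S}_\lambda \to V$.

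For $\widehat{\cH}_{\infty}$-equivariance, $\cH_{\infty}$-equivariance passes to $\hat{f}$ by continuity. For each $\epsilon_k$, fix $v \in S_{\lambda^{(n)}}\subseteq S_{\lambda^{(\infty)}}$; Lemma \ref{lem stability of partial triv idem} gives $\epsilon_k v = \epsilon_k^{(N)} v$ for $N$ sufficiently large, so $\hat{f}(\epsilon_k v) = \epsilon_k^{(N)} f(v)$. Because $T_j$ acts as the identity on $S_{\lambda^{(n)}} \subseteq S_{\lambda^{(\infty)}}$ for $j \geq n$ (the labels $j, j+1$ sit in the infinite first row of every extension of $\tau$), the vector $f(v)$ is $T_j$-invariant for $j \geq n$. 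Combining this with the defining relations $T_j \epsilon_k = \epsilon_k$ for $j \geq k+1$ and the $q$-symmetrizer identity $\epsilon_k \cdot \epsilon_k^{(N)} = \epsilon_k$ in $\widehat{\cH}_\infty$ (a consequence of $\frac{1}{[N-k]_t!}\sum_{\sigma \in \fS_{N-k}} t^{\binom{N-k}{2} - \ell(\sigma)} = 1$), one deduces $\epsilon_k^{(N)} f(v) = \epsilon_k f(v)$ for $N$ large. Hence $\hat{f}(\epsilon_k v) = \epsilon_k \hat{f}(v)$ on the dense $S_{\lambda^{(\infty)}}$, and by continuity on all of $\hat{S}_\lambda$.

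Injectivity of $\hat{f}$ follows because $\ker \hat{f}$ is a closed $\widehat{\cH}_\infty$-submodule of the irreducible module $\hat{S}_\lambda$ (Proposition \ref{prop inf specht are irred}) and $\hat{f} \neq 0$. For closed image, given $u$ in the closure of $\hat{f}(\hat{S}_\lambda)$, each $\epsilon_k u$ lies in the finite-dimensional subspace $\hat{f}(\epsilon_k \hat{S}_\lambda)$ (finite-dimensional by Proposition \ref{prop partial inv basis} and hence automatically closed in $V$), so $\epsilon_k u = \hat{f}(w_k)$ for a unique $w_k \in \epsilon_k \hat{S}_\lambda$. The relations $\epsilon_k \epsilon_{k'} = \epsilon_{\min(k, k')}$ enforce the compatibility $w_k = \epsilon_k w_{k'}$ for $k \leq k'$, and since $\epsilon_k v \to v$ in $\hat{S}_\lambda$ for every $v$ (because $\epsilon_k e_\tau = e_\tau$ once $k$ exceeds the rank of $\tau$), the sequence $\{w_k\}$ should converge to some $w \in \hat{S}_\lambda$ with $\hat{f}(w) = u$, establishing closedness; uniqueness of $\hat{f}$ is immediate from density and continuity. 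The main obstacle is making this Cauchy convergence rigorous: one must control $\|w_k\|_{\hat{S}_\lambda}$ uniformly in $k$, which requires a careful analysis of $\hat{f}$ restricted to the filtration of partial-trivial subspaces spanned by the basis of Definition \ref{defn partial inv basis} together with the identity $\epsilon_k^{(N)} f(v) = f(v)$ for $v \in S_{\lambda^{(n)}}$ with $k \geq n-1$.
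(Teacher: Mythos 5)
Your boundedness argument (induction along the poset $\SYT_{\infty}(\lambda)$ using the cover relation and $||T_i||_V\leq 1$) is correct and is just the explicit form of the paper's one-line appeal to cyclicity of $S_{\lambda^{(\infty)}}$; the extension by continuity, uniqueness, and injectivity-via-irreducibility are likewise fine. The genuine gap is in the $\epsilon_k$-equivariance step, which the paper's proof silently asserts and which you try to actually prove: the deduction ``$\epsilon_k^{(N)}f(v)=\epsilon_k f(v)$ for $N$ large'' does not follow from the relations you cite. The relations $\epsilon_kT_j=T_j\epsilon_k=\epsilon_k$ for $j\geq k+1$ give $\rho(\epsilon_k)\,\epsilon_k^{(N)}=\epsilon_k^{(N)}\rho(\epsilon_k)=\rho(\epsilon_k)$ as operators on $V$, i.e.\ you may absorb $\epsilon_k^{(N)}$ \emph{into} $\rho(\epsilon_k)$, which yields $\rho(\epsilon_k)f(v)=\rho(\epsilon_k)\hat f(\epsilon_k v)$ --- the wrong direction. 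To conclude you would need $\rho(\epsilon_k)$ to fix the vector $\epsilon_k^{(N)}f(v)$, and nothing forces this: $\rho(\epsilon_k)$ is extra data on $V$ that is not determined by the $T_i$-action. Concretely, $V=\hat S_{\lambda}\oplus\hat S_{\lambda}$ with $\rho(\epsilon_k)=(\epsilon_k,0)$ satisfies every relation of $\widehat{\cH}_{\infty}$ and is almost-symmetric, and if $f$ is the inclusion of $S_{\lambda^{(\infty)}}$ into the second summand then $\epsilon_k^{(N)}f(e_{\tau_{\lambda}})=(0,e_{\tau_{\lambda}})\neq 0=\rho(\epsilon_k)f(e_{\tau_{\lambda}})$. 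So this step cannot be closed by manipulating the listed relations alone; one needs extra input tying $f$ to the $\epsilon_k$-action on $V$, of the kind that is available in the lemma's only application (Corollary \ref{cor extension of reps}, where the image of $f$ sits inside $\cH_{\infty}.\epsilon_k(v)$).

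The closed-image part is also incomplete, as you yourself flag: the entire content is a uniform lower bound $||w_k||\leq C\,||\hat f(w_k)||$ (equivalently, that the pullback of the $V$-norm along $\hat f$ is equivalent to, not merely dominated by, the norm of $\hat S_{\lambda}$), and you stop exactly where that estimate is needed. For comparison, the paper's proof takes a different and equally terse route, asserting that a sequence in $f(S_{\lambda^{(\infty)}})$ converging in $V$ has its preimage sequence converging in $\hat S_{\lambda}$ because the $\hat S_{\lambda}$-norm is ``stronger''; that is the same missing norm comparison in different clothing. Your idea of testing against the finite-dimensional subspaces $\hat f(\epsilon_k\hat S_{\lambda})$ is a sensible way to organize such an estimate, but as written it both presupposes the unproved equivariance above and leaves the key inequality unproved.
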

\begin{proof}
    Since $V$ is a $\widehat{\cH}_{\infty}$-module, we know that $||T_i||_{V} \leq 1$ for all $i \geq 1.$ Since $S_{\lambda^{(\infty)}}$ is cyclic as a $\cH_{\infty}$-module we know that necessarily $f: S_{\lambda^{(\infty)}}\rightarrow V$ is bounded with respect to the norm topology on $S_{\lambda^{(\infty)}}$ considered as a subspace of $\hat{S}_{\lambda}$. Therefore, $f$ extends uniquely to a non-zero bounded $\widehat{\cH}_{\infty}$-module map $\hat{f}:\hat{S}_{\lambda} \rightarrow V.$
\end{proof}

Combining our results so far shows that every almost-symmetric representation admits a map from one of the modules $\hat{S}_{\lambda}.$

\begin{cor}\label{cor extension of reps}
    If $V$ is an almost-symmetric $\widehat{\cH}_{\infty}$-module, then there exists a partition $\lambda$ with a nonzero $\widehat{\cH}_{\infty}$-module map $\hat{S}_{\lambda} \rightarrow V.$ 
\end{cor}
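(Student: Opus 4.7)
The plan is to stitch together the three preparatory results: the almost-symmetry hypothesis, Proposition \ref{prop cyclic vect give specht mods}, and Lemma \ref{lem extension of reps}. Since $V$ is almost-symmetric, by definition there exists some $k \geq 0$ with $\rho(\epsilon_k) \neq 0$, so I can pick $v \in V$ with $w := \epsilon_k(v) \neq 0$. The relation $T_j \epsilon_k = \epsilon_k$ for $j \geq k+1$ (from Definition \ref{defn inf specht module closure} of $\widehat{\cH}_\infty$) implies $T_j(w) = T_j \epsilon_k(v) = \epsilon_k(v) = w$ for all $j \geq k+1$. So $w$ is a nonzero vector in $V$ which is $T_i$-invariant for all $i > k$.

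Next, I apply Proposition \ref{prop cyclic vect give specht mods} to the vector $w$: this produces a partition $\lambda$ with $|\lambda| \leq k$ and a $\cH_\infty$-module injection $S_{\lambda^{(\infty)}} \hookrightarrow \cH_\infty.w \subseteq V$. This gives a nonzero injective $\cH_\infty$-module map $f: S_{\lambda^{(\infty)}} \to V$.

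Finally, I invoke Lemma \ref{lem extension of reps}, which is exactly designed to promote such a $\cH_\infty$-map to a bounded $\widehat{\cH}_\infty$-map on the completion. The lemma yields a unique nonzero bounded $\widehat{\cH}_\infty$-module map $\hat{f}: \hat{S}_\lambda \to V$ extending $f$, with closed image. This is the desired map, completing the proof.

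There is no real obstacle here: all the combinatorial input (producing the embedded $S_{\lambda^{(\infty)}}$ from dominance and stability arguments) and all the analytic input (passing from $S_{\lambda^{(\infty)}}$ to its completion $\hat{S}_\lambda$ via boundedness and density) are already encapsulated in the cited results. The only step that requires care is verifying that the partially-trivial vector $w = \epsilon_k(v)$ genuinely satisfies the hypothesis of Proposition \ref{prop cyclic vect give specht mods}, which follows immediately from the $\widehat{\cH}_\infty$ relation $T_j \epsilon_k = \epsilon_k$ for $j \geq k+1$.
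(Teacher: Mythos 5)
Your proposal is correct and follows essentially the same route as the paper's own proof: extract a nonzero $\epsilon_k$-image vector, observe its $T_i$-invariance for $i>k$ from the relation $T_i\epsilon_k=\epsilon_k$, apply Proposition \ref{prop cyclic vect give specht mods} to embed some $S_{\lambda^{(\infty)}}$, and then extend via Lemma \ref{lem extension of reps}. The only nitpick is that the relation $T_j\epsilon_k=\epsilon_k$ lives in Definition \ref{defn inf hecke} (the extended infinite Hecke algebra), not Definition \ref{defn inf specht module closure}.
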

\begin{proof}
    Since $V$ is an almost-symmetric $\widehat{\cH}_{\infty}$-module, there exists some $k \geq 0$ such that $\epsilon_k \neq 0$ as an operator on $V.$ Then there exists some $v \in V \setminus \{0\}$ such that $\epsilon_k(v) \neq 0.$ Note that for all $i > k$, 
    $T_i \epsilon_k(v) = (T_i\epsilon_k)(v) = \epsilon_k(v)$. By Proposition \ref{prop cyclic vect give specht mods}, we may find a partition $\lambda$ with $|\lambda| \leq k$ such that $S_{\lambda^{(\infty)}}$ embeds as a $\cH_{\infty}$-submodule into $\cH_{\infty}.v \subseteq V.$ By Lemma \ref{lem extension of reps}, there exists a unique extension of the embedding $S_{\lambda^{(\infty)}} \rightarrow V$ to a bounded $\widehat{\cH}_{\infty}$-module map $\hat{S}_{\lambda} \rightarrow V$. 
\end{proof} 

Finally, we are able to complete our partial classification of the irreducible almost-symmetric representations of $\widehat{\cH}_{\infty}.$

\begin{thm}\label{thm classification of irreps}
    If $V$ is an irreducible almost-symmetric $\widehat{\cH}_{\infty}$-module, then there exists a bounded $\widehat{\cH}_{\infty}$-module embedding with dense image $ \hat{S}_{\lambda}\rightarrow  V$ for some $\lambda \in \Par.$
\end{thm}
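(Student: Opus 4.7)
The plan is to assemble the pieces already in hand: Corollary \ref{cor extension of reps}, the irreducibility of both $V$ (by hypothesis) and $\hat{S}_{\lambda}$ (Proposition \ref{prop inf specht are irred}), and the non-archimedean open mapping theorem (Lemma \ref{lem open mapping}). There is essentially no new combinatorial or analytic content needed — the theorem is the capstone that ties together the construction of the $\hat{S}_{\lambda}$, their irreducibility, and the classification machinery.

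First I would invoke Corollary \ref{cor extension of reps} to produce a partition $\lambda \in \Par$ together with a nonzero bounded $\widehat{\cH}_{\infty}$-module map $\hat{f}: \hat{S}_{\lambda} \to V$ whose image $\hat{f}(\hat{S}_{\lambda})$ is closed in $V$. Since $V$ is irreducible and $\hat{f}(\hat{S}_{\lambda})$ is a nonzero closed $\widehat{\cH}_{\infty}$-submodule of $V$, we must have $\hat{f}(\hat{S}_{\lambda}) = V$, so $\hat{f}$ is surjective.

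Next I would show $\hat{f}$ is injective. The kernel $\ker(\hat{f})$ is a closed $\widehat{\cH}_{\infty}$-submodule of $\hat{S}_{\lambda}$ (closed because $\hat{f}$ is continuous and $\{0\} \subseteq V$ is closed; submodule because $\hat{f}$ is $\widehat{\cH}_{\infty}$-equivariant). Since $\hat{S}_{\lambda}$ is irreducible by Proposition \ref{prop inf specht are irred}, either $\ker(\hat{f}) = 0$ or $\ker(\hat{f}) = \hat{S}_{\lambda}$. The latter is impossible because $\hat{f}$ is nonzero, so $\hat{f}$ is injective.

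Finally, having established that $\hat{f}$ is a continuous $\bF$-linear bijection between Banach spaces over $\bF$, the non-archimedean open mapping theorem (Lemma \ref{lem open mapping}) implies $\hat{f}$ is a topological isomorphism. Since $\hat{f}$ is $\widehat{\cH}_{\infty}$-equivariant by construction, it is an isomorphism of $\widehat{\cH}_{\infty}$-representations, giving $\hat{S}_{\lambda} \cong V$. No step presents real difficulty — the main work has already been done in establishing Proposition \ref{prop inf specht are irred} and Corollary \ref{cor extension of reps}; this theorem is the straightforward synthesis.
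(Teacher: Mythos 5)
Your proposal is correct and follows essentially the same route as the paper: Corollary \ref{cor extension of reps} supplies the nonzero bounded map with closed image, irreducibility of $V$ gives surjectivity, irreducibility of $\hat{S}_{\lambda}$ (Proposition \ref{prop inf specht are irred}) gives injectivity, and Lemma \ref{lem open mapping} upgrades the continuous bijection to a topological isomorphism. If anything, your write-up makes the surjectivity step (image is a nonzero closed submodule of the irreducible $V$) slightly more explicit than the paper does.
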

\begin{proof}
    Suppose $V$ is an irreducible almost-symmetric $\widehat{\cH}_{\infty}$-module. By Corollary \ref{cor extension of reps}, there exists a partition $\lambda$ along with a nonzero bounded $\widehat{\cH}_{\infty}$-module map $\hat{S}_{\lambda} \rightarrow V$. By Proposition \ref{prop inf specht are irred}, we know that $\hat{S}_{\lambda}$ is irreducible so the map $\hat{S}_{\lambda} \rightarrow V$ is a bounded $\widehat{\cH}_{\infty}$-module linear isomorphism. In particular, the map $\hat{S}_{\lambda} \rightarrow V$ must be injective as otherwise, by continuity, the kernel would be a closed $\widehat{\cH}_{\infty}$-invariant subspace of $\hat{S}_{\lambda}.$ Finally, the image of the embedding $\hat{S}_{\lambda} \rightarrow V$ must have dense image in $V$ as otherwise the closure of the image would be a closed proper $\widehat{\cH}_{\infty}$-invariant subspace of $V$.
\end{proof}

\section{Regularized Traces of Irreducible Almost-symmetric Representations}

The traces values of irreducible representations of Hecke algebras have an interesting place in algebraic combinatorics relating to chromatic quasi-symmetric functions, immanants, LLT polynomials, and Kazhdan--Lusztig theory \cite{Haiman} \cite{SW16} \cite{MSW}. Since we have found a natural analogue for at least some of the irreducible representations of $\cH_{\infty}$, it seems natural to try and extend this theory to the infinite setting. 

In the finite dimensional setting, there is a canonically well-defined trace functional for irreducible Hecke algebra representations. Even in the infinite dimensional setting, for the infinite symmetric group and infinite Hecke algebras over $\mathbb{C}$ there are methods for canonically defining traces. However, in the infinite dimensional setting over the non-archimedean field $\bF$ we run into some difficulties. First, any attempt at defining limits of normalized trace functionals in the way that is usually done over $\mathbb{C}$ for the representations $S_{\lambda^{(n)}}$ fails over $\bF$. That leaves the only available option to instead consider traces in the functional analysis sense of trace-class operators on Hilbert spaces. Here again, we run into difficulties since the Banach spaces $\hat{S}_{\lambda}$ are not reflexive (at least for $\lambda \neq \emptyset).$ Our solution is to consider inner products in the sense of Definition \ref{defn inner products}. Typically, to compute the trace of an operator $x$ with respect to an inner product, one considers sums of the form $\langle x(v_i),v_i \rangle$ ranging over an orthonormal basis. In our case, most elements of $\cH_{\infty}$ have non-convergent trace values under this definition. To arrive at well-defined trace values for all elements of $\cH_{\infty}$ on $\hat{S}_{\lambda}$, we instead re-normalize our basis and use the same type of series but for the rescaled basis. 

Our choice of rescaling is later justified in Theorems \ref{thm integrality} and \ref{rationality thm} where we prove the surprising result that the convergent series $\Gamma_{\lambda}(T_{\sigma})$ for all $\sigma \in \mathfrak{S}_{\infty}$ are integral (in the non-archimedean sense) and rational. An arbitrary choice for this normalization would not satisfy these properties.

\subsection{Inner Products on Irreducible Representations}

In this section, we show that the representations $\hat{S}_{\lambda}$ admit (essentially) unique inner products $(-,-)_{\lambda}$ subject to the constraint the operators $T_i$ must be self-adjoint. In order to build these inner products on the full spaces $\hat{S}_{\lambda}$, we first look at the finite dimensional representations $S_{\lambda}.$

\begin{prop}
    Let $\lambda \in \Par$ with $|\lambda| = n.$ There exists a unique inner product $\langle-,-\rangle_{\lambda}: S_{\lambda} \times S_{\lambda} \rightarrow \bF$ such that for all $1\leq i \leq n-1, $ $T_i^* = T_i$ and $\langle e_{\tau_{\lambda}}, e_{\tau_{\lambda}} \rangle_{\lambda} = 1.$ 
\end{prop}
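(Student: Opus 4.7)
The plan is to establish uniqueness by exploiting self-adjointness of the Jucys--Murphy elements, and then prove existence by an explicit construction on the basis $\{e_\tau : \tau \in \SYT(\lambda)\}$.

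For uniqueness, the quadratic relation $(T_i-1)(T_i+t)=0$ rearranges to give $T_i^{-1}$ as a linear polynomial in $T_i$, so if each $T_i$ is self-adjoint then so is each $T_i^{-1}$. By the product formula $\theta_j = t^{j-1}T_{j-1}^{-1}\cdots T_1^{-1}T_1^{-1}\cdots T_{j-1}^{-1}$ from Proposition \ref{additional relations for finite hecke}, each $\theta_j$ is a palindromic product of self-adjoint operators and is therefore self-adjoint by Lemma \ref{lem adjoint properties}. The standard pairing argument $\alpha_j\langle e_\tau, e_{\tau'}\rangle = \langle\theta_j e_\tau, e_{\tau'}\rangle = \langle e_\tau, \theta_j e_{\tau'}\rangle = \beta_j\langle e_\tau, e_{\tau'}\rangle$ then forces orthogonality of joint $\theta$-eigenvectors with distinct joint eigenvalues; since the $(\theta_1,\dots,\theta_n)$-spectrum separates the tableau basis of $S_\lambda$ by Remark \ref{remark branching}, the basis $\{e_\tau\}$ is $\langle-,-\rangle_\lambda$-orthogonal. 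To determine the diagonal values $\langle e_\tau, e_\tau\rangle_\lambda$, I use that $\varphi_i$ is the commutator $(tT_i^{-1})\theta_i - \theta_i(tT_i^{-1})$ of self-adjoint operators, hence skew-adjoint by Lemma \ref{lem adjoint properties}. For $s_i(\tau)>\tau$, Definition \ref{irreps for finite hecke defn} gives $\varphi_i(e_\tau) = (t^{c_\tau(i)}-t^{c_\tau(i+1)})e_{s_i\tau}$, while Proposition \ref{additional relations for finite hecke} determines $\varphi_i(e_{s_i\tau})=\gamma e_\tau$ via $\varphi_i^2 = (t\theta_i-\theta_{i+1})(t\theta_{i+1}-\theta_i)$. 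Pairing and invoking $\varphi_i^*=-\varphi_i$ yields
\[
\langle e_{s_i\tau}, e_{s_i\tau}\rangle_\lambda \;=\; \frac{-(t^{c_\tau(i)+1}-t^{c_\tau(i+1)})(t^{c_\tau(i+1)+1}-t^{c_\tau(i)})}{(t^{c_\tau(i)}-t^{c_\tau(i+1)})^2}\,\langle e_\tau, e_\tau\rangle_\lambda.
\]
Since $\tau_\lambda$ is the unique minimum of $\SYT(\lambda)$ and any $\tau$ is connected to $\tau_\lambda$ by covering moves, this recursion together with the normalization $\langle e_{\tau_\lambda}, e_{\tau_\lambda}\rangle_\lambda = 1$ determines the form completely.

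For existence, I take the displayed recursion as the definition, extended orthogonally on the $\{e_\tau\}$ basis. Well-definedness (path independence along saturated chains in $\SYT(\lambda)$) reduces to two local diamond checks: commuting moves $s_i, s_j$ with $|i-j|>1$ trivially commute, and the braid rhombi arising from $s_is_{i+1}s_i = s_{i+1}s_is_{i+1}$ produce the same product of content-factors, a three-content algebraic identity. With the form in hand, self-adjointness of each $T_i$ reduces by orthogonality to the scalar cases when $i,i+1$ lie in the same row or column of $\tau$ (trivial), and a $2\times 2$ verification on $\mathrm{span}(e_\tau, e_{s_i\tau})$ when $s_i(\tau)$ is defined; this boils down to checking that the cross-term coefficients from the two formulas in Remark \ref{remark explicit action} match after rescaling by the norm-squared ratio given by the recursion, which is a direct calculation.

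The main obstacle is the path-independence check. A cleaner alternative is to exhibit $\langle e_\tau, e_\tau\rangle_\lambda$ in closed form as a product of content-factors, one for each inversion of $\tau$ relative to $\tau_\lambda$ (a Hecke analogue of a hook-type product), and then verify that this explicit expression satisfies the recursion directly, sidestepping well-definedness entirely. In either route, the uniqueness half of the proof provides a consistency check: any form satisfying the constraints must coincide with the one constructed, and the $2\times 2$ verification of $T_i$-self-adjointness is the only remaining computation.
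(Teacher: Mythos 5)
Your proposal is correct and follows essentially the same route as the paper: self-adjointness of the $\theta_j$ forces orthogonality of the tableau basis, skew-adjointness of the $\varphi_i$ together with $\varphi_i^2=(t\theta_i-\theta_{i+1})(t\theta_{i+1}-\theta_i)$ yields exactly the recursion for the diagonal values, and minimality of $\tau_\lambda$ pins everything down. You are in fact somewhat more careful than the paper on the existence half (path-independence of the recursion and the $2\times 2$ check that $T_i^*=T_i$), which the paper leaves implicit.
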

\begin{proof}
    Suppose $\langle - , - \rangle:S_{\lambda} \times S_{\lambda} \rightarrow \bF$ is some symmetric bilinear form such that for all $1\leq i \leq n-1$ and $v,w \in S_{\lambda},$ $\langle T_i(v),w\rangle = \langle v, T_{i}(w) \rangle.$ It is easy to see that for all $1\leq j \leq n$ and $1\leq i \leq n-1,$ and for all $v, w\in S_{\lambda}, $ $\langle \theta_j(v), w\rangle = \langle v, \theta_j(w) \rangle$ and $\langle \varphi_j(v), w\rangle = - \langle v, \varphi_j(w) \rangle.$ Recall that if $\tau_1, \tau_2 \in \SYT(\lambda)$ with $\tau_1 \neq \tau_2$ then there exists some $1\leq j \leq n$ such that $c_{\tau_1}(j) \neq c_{\tau_2}(j).$ Thus for any $\tau_1 \neq \tau_2$ we can find $1\leq j \leq n$ such that $t^{c_{\tau_1}(j)} \neq t^{c_{\tau_2}(j)}$ so that 
    $$(t^{c_{\tau_1}(j)} - t^{c_{\tau_2}(j)}) \langle e_{\tau_1},e_{\tau_2}\rangle = \langle t^{c_{\tau_1}(j)}e_{\tau_1},e_{\tau_2}\rangle - \langle e_{\tau_1},t^{c_{\tau_2}(j)}e_{\tau_2}\rangle = \langle \theta_j(e_{\tau_1}),e_{\tau_2}\rangle - \langle e_{\tau_1},\theta_j(e_{\tau_2})\rangle = 0$$ which implies that $\langle e_{\tau_1},e_{\tau_2}\rangle = 0.$ Therefore, the set $\{e_{\tau}| \tau \in \SYT(\lambda) \}$ is an orthogonal basis for $S_{\lambda}$ with respect to $\langle - , - \rangle$. Thus any such inner product $\langle-,-\rangle$ on $S_{\lambda}$ is uniquely determined by the values $\langle e_{\tau}, e_{\tau} \rangle\in \bF$ for $\tau \in \SYT(\lambda).$ Furthermore, if $\tau \in \SYT(\lambda)$ and $1\leq i \leq n-1$ with $s_i(\tau) > \tau,$ then on one hand,
    $$\langle \varphi_i(e_{\tau}), \varphi_i(e_{\tau}) \rangle = (t^{c_{\tau}(i)}-t^{c_{\tau}(i+1)})^2\langle e_{s_i(\tau)}, e_{s_i(\tau)} \rangle.$$ 
    Using Lemma \ref{lem adjoint properties} we know that for all $1\leq i \leq n-1$, since $T_i$ and $\theta_{i}$ are self-adjoint with respect to $\langle -,- \rangle$, $\varphi_i = tT_{i}^{-1}\theta_i - \theta_i tT_{i}^{-1}$ is skew-adjoint with respect to each $\langle -,- \rangle.$ On the other hand, 
    \begin{align*}
        &\langle \varphi_i(e_{\tau}), \varphi_i(e_{\tau}) \rangle\\
        &= \langle e_{\tau}, -\varphi_i^2(e_{\tau}) \rangle\\
        &= \langle e_{\tau}, -(t\theta_{i}-\theta_{i+1})(t\theta_{i+1}-\theta_{i})(e_{\tau}) \rangle \\
        &= -(t^{c_{\tau}(i)+1}-t^{c_{\tau}(i+1)})(t^{c_{\tau}(i+1)+1}-t^{c_{\tau}(i)})\langle e_{\tau}, e_{\tau} \rangle.\\ 
    \end{align*}
By assumption, $c_{\tau}(i+1) \notin \{c_{\tau}(i)-1 ,c_{\tau}(i), c_{\tau}(i)+1\}$ so because $t$ is a pseudo-uniformizer in $\bF,$ $t^{c_{\tau}(i+1)}  \notin \{ t^{c_{\tau}(i)-1}, t^{c_{\tau}(i)}, t^{c_{\tau}(i)+1} \}.$ Therefore, 
$$\langle e_{s_{i}(\tau)},e_{s_{i}(\tau)} \rangle = \frac{(t^{c_{\tau}(i+1)- c_{\tau}(i)}-t)(t^{c_{\tau}(i+1)- c_{\tau}(i)+1}-1)}{(t^{c_{\tau}(i+1)- c_{\tau}(i)}-1)^2}\langle e_{\tau},e_{\tau} \rangle.$$ Thus if $\langle e_{\tau},e_{\tau} \rangle \neq 0$, then $\langle e_{s_{i}(\tau)},e_{s_{i}(\tau)} \rangle \neq 0.$ Since $\tau_{\lambda}$ is the unique minimal element of $\SYT(\lambda)$ if $\langle e_{\tau_{\lambda}}, e_{\tau_{\lambda}} \rangle  \neq 0$, then the values $\langle e_{\tau}, e_{\tau} \rangle$  for $\tau \in \SYT(\lambda)$ are uniquely determined and nonzero. In particular, we may choose $\langle e_{\tau_{\lambda}}, e_{\tau_{\lambda}}\rangle = 1$ and define the other values $\langle e_{\tau}, e_{\tau}\rangle$ accordingly. It follows that there exists a unique inner product $\langle - , -\rangle_{\lambda}$ satisfying the criteria in the statement of the proposition. 
\end{proof}

To better keep track of the outputs of the inner products $\langle-,-\rangle_{\lambda}$, we use the following recursive definition. 

\begin{defn}
    Define the function $a: \SYT_{\infty}(\lambda) \rightarrow \bF$ recursively by 
    \begin{itemize}
        \item $a(\tau_{\lambda}):= 1$
        \item if $s_i(\tau) > \tau$, then 
        $a(s_i(\tau)):= \frac{(t^{c_{\tau}(i+1)- c_{\tau}(i)}-t)(t^{c_{\tau}(i+1)- c_{\tau}(i)+1}-1)}{(t^{c_{\tau}(i+1)- c_{\tau}(i)}-1)^2} a(\tau).$
    \end{itemize}
    Define the \textbf{inversion number} $\mathrm{inv}(\tau)$ of $\tau \in \SYT(\lambda)$ to be the minimal number $r$ such that there exist $i_1,\dots,i_r \geq 1$ with $\tau = s_{i_r}\cdots s_{i_1}(\tau_{\lambda}).$ 
\end{defn}

The values $a(\tau)$ are complicated, but luckily they have predictable norms $|a(\tau)|.$

\begin{lem}\label{lem bounds on norms}
    For all $\tau \in \SYT_{\infty}(\lambda),$ $|a(\tau)| = |t|^{\mathrm{inv}(\tau)}.$ In particular, $|a(\tau)| \leq 1$ for all $\tau \in \SYT_{\infty}(\lambda)$.
\end{lem}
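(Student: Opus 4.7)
The plan is to reduce everything to a single ultrametric computation applied inductively along cover relations in the poset $\SYT_\infty(\lambda)$.

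First I would unpack the cover condition. Whenever $s_i(\tau) > \tau$, the entry $i+1$ lies strictly above and strictly to the right of $i$ in $\tau$, so if $i$ sits in box $(r,c)$ and $i+1$ in $(r',c')$, then $r' \leq r-1$ and $c' \geq c+1$. Consequently the content difference $d := c_\tau(i+1) - c_\tau(i) = (c'-r') - (c-r)$ satisfies $d \geq 2$. This is the essential combinatorial input.

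Next I would evaluate the ultrametric norm of the recursion factor
\[
\frac{(t^{d}-t)(t^{d+1}-1)}{(t^{d}-1)^{2}}
\]
using $0 < |t| < 1$ and $d \geq 2$. Since $|t^{d-1}| < 1$ we have $|t^{d-1}-1| = 1$, and therefore $|t^{d}-t| = |t|\cdot|t^{d-1}-1| = |t|$. Similarly $|t^{d+1}-1| = 1$ and $|t^{d}-1| = 1$. Multiplying, the factor has norm exactly $|t|$. Thus, for every cover $\tau \lessdot s_i(\tau)$, the recursion yields
\[
|a(s_i(\tau))| = |t|\cdot |a(\tau)|.
\]

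From here, induction on chain length in the poset $\SYT_\infty(\lambda)$ finishes the argument. Starting from $|a(\tau_\lambda)| = 1$, any sequence of covers $\tau_\lambda = \tau^{(0)} \lessdot \tau^{(1)} \lessdot \cdots \lessdot \tau^{(r)} = \tau$ of length $r$ produces $|a(\tau)| = |t|^{r}$. Since $a$ is already a well-defined function on $\SYT_\infty(\lambda)$ (it records the diagonal entries of the inner product $\langle-,-\rangle_\lambda$ from the preceding proposition, restricted to finite sub-tableaux), the value $|t|^{r}$ does not depend on the chosen chain; in particular the common chain length equals the minimum $\mathrm{inv}(\tau)$. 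Hence $|a(\tau)| = |t|^{\mathrm{inv}(\tau)}$, and the bound $|a(\tau)| \leq 1$ is immediate from $|t| < 1$ and $\mathrm{inv}(\tau) \geq 0$.

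There is no real obstacle here — the entire proof is driven by the single observation that $d \geq 2$ forces the ultrametric cancellations to yield exactly one factor of $|t|$ per cover step. The only subtlety worth flagging explicitly is the chain-independence of $|a(\tau)|$, which one can either invoke from the well-definedness of $a$ as an inner-product diagonal, or deduce \emph{a posteriori} from the fact that two chains to the same tableau must give the same norm.
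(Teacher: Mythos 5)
Your proof is correct and follows essentially the same route as the paper's: both hinge on the observation that the cover condition forces the content gap $d = c_{\tau}(i+1)-c_{\tau}(i) \geq 2$, so the ultrametric norm of the recursion factor $\frac{(t^{d}-t)(t^{d+1}-1)}{(t^{d}-1)^{2}}$ is exactly $|t|$, and then induction along cover chains from $\tau_{\lambda}$ finishes. Your explicit remark on chain-independence of the exponent (via the well-definedness of $a$) is a small point the paper leaves implicit, but it does not change the argument.
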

\begin{proof}
    If $\tau \in \SYT(\lambda)$ and $i \geq 1,$ then $c_{\tau}(i+1) - c_{\tau}(i) \geq 2.$ Therefore, by definition 
    \begin{align*}
        |a(s_{i}(\tau))| &= \left| \frac{(t^{c_{\tau}(i+1)- c_{\tau}(i)}-t)(t^{c_{\tau}(i+1)- c_{\tau}(i)+1}-1)}{(t^{c_{\tau}(i+1)- c_{\tau}(i)}-1)^2} a(\tau) \right| 
        \\
        &= |t| \cdot \left| \frac{(t^{c_{\tau}(i+1)- c_{\tau}(i)-1}-1)(t^{c_{\tau}(i+1)- c_{\tau}(i)+1}-1)}{(t^{c_{\tau}(i+1)- c_{\tau}(i)}-1)^2} \right| \cdot |a(\tau)| \\
        &= |t| \cdot |a(\tau)| \\
    \end{align*}
   Therefore, since $a(\tau_{\lambda}) = 1$, by induction, we see that $|a(\tau)| = |t|^r$ where $r$ is the minimal number such that there exist $i_1,\dots,i_r \geq 1$ with $\tau = s_{i_r}\cdots s_{i_1}(\tau_{\lambda}).$ In other words, 
   $|a(\tau)| = |t|^{\mathrm{inv}(\tau)}.$
\end{proof}

\begin{remark}
    Equivalently, $\mathrm{inv}(\tau)$ is the number of inversion pairs of $\tau$, i.e., boxes $(\square_1,\square_2)$ of the diagram $\lambda^{(\infty)}$ such that $\tau_{\lambda}(\square_1) < \tau_{\lambda}(\square_2)$ and $\tau(\square_1) > \tau(\square_2).$ This may be easily verified by induction along the poset $\SYT_{\infty}(\lambda).$ Similarly, in the same way one may verify the following explicit formula:

    $$a(\tau) = t^{\mathrm{inv(\tau)}} \prod_{(\square_1,\square_2) \in \mathrm{Inv}(\tau)}\frac{(1-t^{c(\square_2)-c(\square_1)-1})(1-t^{c(\square_2)-c(\square_1)+1})}{(1-t^{c(\square_2)-c(\square_1)})^2}$$
    where $\mathrm{Inv}(\tau)$ is the set of inverse pairs of $\tau.$
\end{remark}

\begin{example}
   Consider $\tau \in \SYT(3,2,1)$ given by $\tau = \begin{ytableau}
       1 & 2 & 3 \\
       4 & 6 & \none\\
       5 & \none & \none \\
   \end{ytableau}.$ If we label the boxes of $\tau$ by their labels, then the inversion pairs of $\tau$ are $(4,2),(4,3),(5,2),(5,3),$ and $(6,3)$ which have content gaps $c_{\tau}(\square_2)-c_{\tau}(\square_1)$ given by $2,3,3,4,$ and $2$ respectively. 
   Therefore, 
   $$a(\tau) = t^{5} \left( \frac{(1-t)(1-t^3)}{(1-t^2)^2} \right)^2 \left( \frac{(1-t^2)(1-t^4)}{(1-t^3)^2} \right)^2 \left( \frac{(1-t^3)(1-t^5)}{(1-t^4)^2} \right) = t^5 \frac{(1-t)^2(1-t^3)(1-t^5)}{(1-t^2)^2}.$$
\end{example}

Using the values $a(\tau)$, we are able to give an explicit description of the symmetric bilinear forms $\langle-.-\rangle_{\lambda^{(\infty)}}$ extending those forms $\langle-.-\rangle_{\lambda^{(n)}}$ on the finite dimensional representations $S_{\lambda^{(n)}}.$

\begin{defn}
    Define the symmetric bilinear form $\langle - , - \rangle_{\lambda^{(\infty)}}: S_{\lambda^{(\infty)}} \times S_{\lambda^{(\infty)}} \rightarrow \bF$ by 
    $$\left\langle \sum_{\tau \in \SYT_{\infty}(\lambda)} \alpha_{\tau} e_{\tau},  \sum_{\tau \in \SYT_{\infty}(\lambda)} \beta_{\tau} e_{\tau}\right\rangle_{\lambda^{(\infty)}}:= \sum_{\tau \in \SYT_{\infty}(\lambda)} \alpha_{\tau} \beta_{\tau} a(\tau).$$
\end{defn}

The forms $\langle-,-\rangle_{\lambda^{(\infty)}}$ are continuous with respect to the norm topology on $S_{\lambda^{(\infty)}} \subseteq \hat{S}_{\lambda}.$

\begin{lem}\label{lem inner product is continuous}
    For all $v,w \in S_{\lambda^{(\infty)}}$, $|\langle v,w \rangle_{\lambda^{(\infty)}}| \leq ||v||\cdot ||w|| .$
\end{lem}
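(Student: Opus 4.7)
The plan is to directly expand both sides using the basis $\{e_\tau : \tau \in \SYT_\infty(\lambda)\}$ and then apply the non-archimedean (ultra-metric) inequality together with the bound $|a(\tau)| \leq 1$ from Lemma \ref{lem bounds on norms}.

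More concretely, write $v = \sum_\tau \alpha_\tau e_\tau$ and $w = \sum_\tau \beta_\tau e_\tau$ where only finitely many $\alpha_\tau$ and $\beta_\tau$ are nonzero, since $v,w \in S_{\lambda^{(\infty)}}$. By definition, $\langle v,w\rangle_{\lambda^{(\infty)}} = \sum_\tau \alpha_\tau \beta_\tau a(\tau)$ is a finite sum, so the ultra-metric inequality on $\bF$ gives
\[
|\langle v,w\rangle_{\lambda^{(\infty)}}| \;\leq\; \max_{\tau \in \SYT_\infty(\lambda)} |\alpha_\tau|\cdot |\beta_\tau|\cdot |a(\tau)|.
\]
Applying Lemma \ref{lem bounds on norms} to bound $|a(\tau)|\leq 1$ and then estimating each factor separately by $\sup_\tau |\alpha_\tau| = ||v||$ and $\sup_\tau |\beta_\tau| = ||w||$ yields the claim.

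I do not expect any real obstacle here; this is essentially a one-line computation resting on the ultrametric triangle inequality plus the already-established normalization $|a(\tau)| = |t|^{\mathrm{inv}(\tau)} \leq 1$. The only thing to be careful about is that the inner product is defined only on $S_{\lambda^{(\infty)}}$ where the sums are genuinely finite, so no convergence issue arises in this lemma; the extension to $\hat{S}_\lambda$ will presumably be carried out afterwards using exactly this continuity bound.
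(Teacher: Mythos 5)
Your proof is correct and follows essentially the same route as the paper's: expand in the $e_\tau$ basis, apply the ultrametric inequality to the sum $\sum_\tau \alpha_\tau\beta_\tau a(\tau)$, invoke Lemma \ref{lem bounds on norms} to discard $|a(\tau)|\leq 1$, and bound the remaining supremum by $\|v\|\cdot\|w\|$. Your added observation that the sums are genuinely finite on $S_{\lambda^{(\infty)}}$ is a harmless (and accurate) refinement of the paper's use of a supremum over all of $\SYT_\infty(\lambda)$.
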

\begin{proof}
    Let $v = \sum_{\tau \in \SYT_{\infty}(\lambda)} \alpha_{\tau} e_{\tau}, w = \sum_{\tau \in \SYT_{\infty}(\lambda)} \beta_{\tau} e_{\tau} \in S_{\lambda^{(\infty)}}.$ Then
    $$\left|\left\langle \sum_{\tau \in \SYT_{\infty}(\lambda)} \alpha_{\tau} e_{\tau},  \sum_{\tau \in \SYT_{\infty}(\lambda)} \beta_{\tau} e_{\tau}\right\rangle_{\lambda^{(\infty)}} \right|= \left|\sum_{\tau \in \SYT_{\infty}(\lambda)} \alpha_{\tau} \beta_{\tau} a(\tau)\right| \leq \sup_{\tau \in \SYT_{\infty}(\lambda)} | \alpha_{\tau} \beta_{\tau} a(\tau)|$$ so from Lemma \ref{lem bounds on norms} we find that 
    $$|\langle v, w \rangle_{\lambda^{(\infty)}}| \leq \sup_{\tau \in \SYT_{\infty}(\lambda)} | \alpha_{\tau} \beta_{\tau}| \leq \sup_{\tau \in \SYT_{\infty}(\lambda)} | \alpha_{\tau}| \cdot \sup_{\tau \in \SYT_{\infty}(\lambda)} |  \beta_{\tau}| = ||v|| \cdot ||w||.$$
\end{proof}

By continuity, we are able to define the desired inner products on the spaces $\hat{S}_{\lambda}.$

\begin{thm}\label{thm unique inner product}
    There exists a unique inner product $( -,-)_{\lambda}: \hat{S}_{\lambda} \times \hat{S}_{\lambda} \rightarrow \bF$ such that $( e_{\tau_{\lambda}}, e_{\tau_{\lambda}} )_{\lambda} = 1$ and for all $i \geq 1,$ $T_i^* = T_i.$
\end{thm}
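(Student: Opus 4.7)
The plan is to extend the bilinear form $\langle-,-\rangle_{\lambda^{(\infty)}}$ from the dense subspace $S_{\lambda^{(\infty)}} \subset \hat{S}_{\lambda}$ to a continuous bilinear form on the full Banach space by continuity, verify its three desired properties, and then derive uniqueness from the finite-dimensional case. By Lemma \ref{lem inner product is continuous}, the bound $|\langle v,w\rangle_{\lambda^{(\infty)}}| \leq \|v\|\cdot\|w\|$ holds on $S_{\lambda^{(\infty)}}$, so this bilinear form extends uniquely to a continuous symmetric bilinear form $(-,-)_{\lambda}: \hat{S}_{\lambda} \times \hat{S}_{\lambda} \to \bF$ satisfying the same bound.

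The normalization $(e_{\tau_{\lambda}}, e_{\tau_{\lambda}})_{\lambda} = a(\tau_{\lambda}) = 1$ passes to the extension by construction. Self-adjointness of each $T_i$ was built into the definition of $\langle-,-\rangle_{\lambda^{(n)}}$ on every finite piece, and continuity of both $T_i$ and of the form extends the identity $(T_i v, w)_{\lambda} = (v, T_i w)_{\lambda}$ from $S_{\lambda^{(\infty)}}$ to all of $\hat{S}_{\lambda}$. For non-degeneracy, observe that by Lemma \ref{lem bounds on norms} we have $|a(\tau)| = |t|^{\mathrm{inv}(\tau)} \neq 0$ for every $\tau \in \SYT_{\infty}(\lambda)$. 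Given any nonzero $v = \sum_{\tau} c_{\tau} e_{\tau} \in \hat{S}_{\lambda}$, pick $\tau_0$ with $c_{\tau_0} \neq 0$; by continuity and the orthogonality of the $\{e_{\tau}\}$ with respect to $\langle-,-\rangle_{\lambda^{(\infty)}}$, we compute $(v, e_{\tau_0})_{\lambda} = c_{\tau_0} a(\tau_0) \neq 0$.

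For uniqueness, let $(-,-)'$ be any inner product on $\hat{S}_{\lambda}$ with $T_i^* = T_i$ for all $i \geq 1$ and $(e_{\tau_{\lambda}},e_{\tau_{\lambda}})' = 1$. Restricting to each finite-dimensional invariant subspace $S_{\lambda^{(n)}} \subseteq \hat{S}_{\lambda}$, one obtains a continuous symmetric bilinear form with $T_i^* = T_i$ for $1 \leq i \leq n-1$. The argument from the preceding finite-dimensional proposition applies: self-adjointness of the $T_i$ implies self-adjointness of the $\theta_j$, which forces orthogonality of the $\{e_{\tau}\}$-basis, and the value on the column-standard vector $e_{\tau_{\lambda^{(n)}}}$ propagates to all other basis vectors via the recursion coming from the $\varphi_i$'s. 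Since $\tau_{\lambda^{(n)}}$ is precisely the image of $\tau_{\lambda}$ under the embedding of Definition \ref{defn inf specht module} (adding new entries to the first row preserves column-standardness), this value equals $1$. Thus $(-,-)'$ agrees with $(-,-)_{\lambda}$ on every $S_{\lambda^{(n)}}$, hence on the dense subspace $S_{\lambda^{(\infty)}}$, and continuity of both forms yields equality on $\hat{S}_{\lambda}$.

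The main obstacle is that the finite uniqueness argument must be imported without any non-degeneracy assumption on the restriction $(-,-)'|_{S_{\lambda^{(n)}}}$, since non-degeneracy of $(-,-)'$ on $\hat{S}_{\lambda}$ does not automatically restrict to each finite piece. A careful reading of the finite proposition's proof confirms that only the self-adjointness of the $T_i$ and the normalization at the column-standard vector are used to pin down the form; non-degeneracy emerges as a consequence rather than an input. Once this is checked, the combinatorial observation that column-standard fillings stabilize under the embeddings completes the argument.
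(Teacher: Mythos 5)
Your proof is correct and follows essentially the same route as the paper: extend $\langle-,-\rangle_{\lambda^{(\infty)}}$ by continuity using Lemma \ref{lem inner product is continuous} and the density of $S_{\lambda^{(\infty)}}$, then obtain uniqueness by reducing to the finite-dimensional proposition on each $S_{\lambda^{(n)}}$ and invoking continuity. You simply spell out the non-degeneracy check and the observation that the finite-dimensional uniqueness argument needs only self-adjointness and the normalization (not non-degeneracy of the restriction), both of which the paper leaves implicit.
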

\begin{proof}
    By Lemma \ref{lem inner product is continuous}, we find that $\langle -, - \rangle_{\lambda^{(\infty)}}:S_{\lambda^{(\infty)}}\times S_{\lambda^{(\infty)}} \rightarrow \bF$ is continuous. Therefore, as $S_{\lambda^{(\infty)}}$ is norm dense in $\hat{S}_{\lambda}$, there exists a unique extension of $\langle -, - \rangle_{\lambda^{(\infty)}}$ to $\hat{S}_{\lambda}\times \hat{S}_{\lambda}$. Call this map $( -,-)_{\lambda}.$ It is easy to see that $( -,-)_{\lambda}$ satisfies the requirements of the theorem statement. In particular, $(-,-)_{\lambda}$ has the explicit description 
    $$\left( \sum_{\tau \in \SYT_{\infty}(\lambda)} \alpha_{\tau} e_{\tau},  \sum_{\tau \in \SYT_{\infty}(\lambda)} \beta_{\tau} e_{\tau} \right)_{\lambda} = \sum_{\tau \in \SYT_{\infty}(\lambda)} \alpha_{\tau} \beta_{\tau} a(\tau).$$ 
    Furthermore, $( -,-)_{\lambda}$ is unique since any other $( -,-)_{\lambda}'$ satisfying the requirements of the theorem statement would necessarily agree with $( -,-)_{\lambda}$ on the dense subspace $S_{\lambda^{(\infty)}}$ so by continuity $( -,-)_{\lambda}= ( -,-)_{\lambda}'.$
\end{proof}

\subsection{Regularized Traces of Irreducible Representations}

In the absence of the usual tools of analysis over $\mathbb{C}$ that would allow one to consider traces of operators acting on the spaces $\hat{S}_{\lambda},$ we instead take a combinatorial approach to defining analogous regularized trace functionals. In order to match conventions with the author's previous work \cite{BWMurnaghan} (see Remark \ref{remark vector-valued} and Theorem \ref{rationality thm} for more details), we need to re-normalize the bases $\{ e_{\tau}|\tau \in \SYT_{\infty}(\lambda)\}$ for each $\hat{S}_{\lambda}.$

\begin{defn}\label{normalization defn}
    For $\lambda \in \Par$ and $\tau \in \SYT_{\infty}(\lambda),$ define 
    $$\tilde{e}_{\tau}:= \prod_{(\square_1,\square_2) \in \mathrm{Inv}(\tau)} \left(\frac{1-t^{c(\square_2)-c(\square_1)}}{1-t^{c(\square_2)-c(\square_1)+1}} \right) e_{\tau}.$$
\end{defn}

Note that $||\tilde{e}_{\tau}|| = ||e_{\tau}|| = 1$ for all $\tau \in \SYT_{\infty}(\lambda).$

\begin{lem}\label{lem bounds on inv}
    Let $\lambda \in \Par.$ For all $k \geq 0$ there exist only finitely many $\tau \in \SYT_{\infty}(\lambda)$ such that $\mathrm{inv}(\tau) \leq k.$
\end{lem}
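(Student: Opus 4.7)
The plan is to reduce the question to showing that the largest label appearing in the ``hanging'' part of $\tau$ is bounded by a function of $\mathrm{inv}(\tau)$. Any $\tau \in \SYT_\infty(\lambda)$ is determined by the set $B(\tau) \subset \mathbb{N}$ of labels occupying the boxes of $\lambda^{(\infty)}$ in rows $\geq 2$ (a set of size $|\lambda|$), together with the $\SYT$-structure it induces on $\lambda$; the complement $A(\tau) = \mathbb{N} \setminus B(\tau)$ then fills the infinite first row of $\lambda^{(\infty)}$ in increasing order. If I can prove $\max B(\tau) \leq \mathrm{inv}(\tau) + n_\lambda$, then the hypothesis $\mathrm{inv}(\tau) \leq k$ forces $B(\tau) \subset \{1, \ldots, k + n_\lambda\}$, which leaves at most $\binom{k+n_\lambda}{|\lambda|} \cdot |\SYT(\lambda)|$ possibilities for $\tau$, giving the desired finiteness.

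To establish the bound, set $M = \max B(\tau)$ and assume $M > n_\lambda$ (otherwise the bound is immediate). Let $\square_0$ be the unique box in the hanging $\lambda$ with $\tau(\square_0) = M$. Since $\square_0 \in \lambda^{(n_\lambda)}$, its $\tau_\lambda$-label $L$ satisfies $L \leq n_\lambda$. I would then count inversion pairs of the form $(\square_0, (1, c''))$ for $\lambda_1 < c'' \leq M - |\lambda|$. For such $c''$, the column-standard description of $\tau_\lambda$ gives $\tau_\lambda(1, c'') = n_\lambda + (c'' - \lambda_1) > L = \tau_\lambda(\square_0)$, so the pair is an inversion candidate. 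Since $|A(\tau) \cap \{1, \ldots, M-1\}| = (M-1) - (|\lambda| - 1) = M - |\lambda|$, the first $M - |\lambda|$ positions of row 1 in $\tau$ carry labels strictly below $M$, so $\tau(1, c'') < M = \tau(\square_0)$, confirming the inversion. Counting gives $M - n_\lambda$ such $c''$, hence $\mathrm{inv}(\tau) \geq M - n_\lambda$.

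The main obstacle, once this parametrization is set up, is the careful inversion count in the middle step: one must separately track the $\tau_\lambda$-ordering on boxes (to identify candidate pairs) and the $\tau$-values (to confirm each reversal). The crucial simplification is that the tail of row $1$ in $\tau_\lambda$ carries the consecutive labels $n_\lambda + 1, n_\lambda + 2, \ldots$, which guarantees $\square_0$ always precedes tail boxes $(1, c'')$ in the $\tau_\lambda$-order no matter where $\square_0$ sits inside $\lambda^{(n_\lambda)}$, decoupling the $\tau_\lambda$-comparison from the location of the large label.
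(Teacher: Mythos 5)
Your argument is correct, and it is worth noting that the paper itself does not prove this lemma: it only cites Corollary 6.6 of \cite{BWMurnaghan}, so your write-up supplies a self-contained proof where the paper defers to an external reference. The two key points both check out. First, the parametrization is valid: an element of $\SYT_{\infty}(\lambda)$ is determined by the $|\lambda|$-element label set $B(\tau)$ of the rows below the first together with the induced filling of $\lambda$, since the infinite first row must then be filled by the complement in increasing order. Second, the inversion count is sound: with $M=\max B(\tau)>n_{\lambda}$ and $\square_0$ the box carrying $M$, you have $\tau_{\lambda}(\square_0)\leq n_{\lambda}<n_{\lambda}+(c''-\lambda_1)=\tau_{\lambda}(1,c'')$ for every $c''>\lambda_1$, while the cardinality computation $|A(\tau)\cap\{1,\dots,M-1\}|=M-|\lambda|$ shows that the boxes $(1,c'')$ with $c''\leq M-|\lambda|$ all carry labels below $M$; this exhibits $M-n_{\lambda}$ inversion pairs, giving $\max B(\tau)\leq \mathrm{inv}(\tau)+n_{\lambda}$ and hence the finiteness bound $\binom{k+n_{\lambda}}{|\lambda|}\,|\SYT(\lambda)|$. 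The one dependency you should make explicit is that you use the inversion-pair characterization of $\mathrm{inv}(\tau)$ (or at least the inequality that the number of inversion pairs is a lower bound for the minimal word length), which the paper asserts in the remark following Lemma \ref{lem bounds on norms} but also leaves to the reader; only the lower-bound direction is needed for your argument, and it follows from the standard observation that each cover step $\tau\mapsto s_i(\tau)$ changes the set of inversion pairs by exactly one element.
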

\begin{proof}
    This follows from a nearly identical argument to the proof of Corollary 6.6 in \cite{BWMurnaghan}.
\end{proof}

Recall that since $\bF$ is non-archimedean, a series over a countable set $J$ of the form $\sum_{j \in J} c_{j}$ converges if and only if for all $\epsilon> 0$ there exists only finitely many $j \in J$ such that $|c_{j}| > \epsilon.$ This is equivalent to requiring that $\lim_{j \in J} |c_j| = 0$ for any ordering of $J.$

\begin{prop}\label{prop traces converge}
   Let $\lambda \in \Par.$ If $A\in \cB(\hat{S}_{\lambda})$, then the series $\sum_{\tau \in \SYT_{\infty}(\lambda)} (A(\tilde{e}_{\tau}),\tilde{e}_{\tau})_{\lambda}$ converges in $\bF.$ Furthermore, $|\sum_{\tau \in \SYT_{\infty}(\lambda)} (A(\tilde{e}_{\tau}),\tilde{e}_{\tau})_{\lambda} | \leq ||A||.$ 
\end{prop}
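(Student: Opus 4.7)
The plan is to reduce the problem to direct norm estimates using the explicit formula for $(-,-)_\lambda$ together with Lemma~\ref{lem bounds on norms} and Lemma~\ref{lem bounds on inv}.

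First, I would verify that $\|\tilde{e}_\tau\| = 1$ for every $\tau \in \SYT_\infty(\lambda)$, i.e., that the rescaling factor $c(\tau) := \prod_{(\square_1,\square_2)\in\mathrm{Inv}(\tau)} \frac{1-t^{c(\square_2)-c(\square_1)}}{1-t^{c(\square_2)-c(\square_1)+1}}$ satisfies $|c(\tau)| = 1$. For this I would check that if $(\square_1,\square_2)$ is an inversion pair, then $c(\square_2) - c(\square_1) \geq 2$: writing $\square_i=(i_i,j_i)$, the column-standard ordering and the monotonicity of standard tableaux across rows and down columns force $j_1 < j_2$ and $i_1 > i_2$, giving $c(\square_2) - c(\square_1) = (j_2-j_1) + (i_1-i_2) \geq 2$. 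Since $|t|<1$, both numerator and denominator factors have absolute value $1$.

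Second, I would use the explicit formula for $(-,-)_\lambda$ from Theorem~\ref{thm unique inner product}. Writing $A(\tilde{e}_\tau) = \sum_{\sigma \in \SYT_\infty(\lambda)} A^\tau_\sigma e_\sigma$ in $\hat{S}_\lambda$ and noting that $\tilde{e}_\tau = c(\tau) e_\tau$ has only one nonzero basis coefficient, the pairing collapses to
\[
(A(\tilde{e}_\tau),\tilde{e}_\tau)_\lambda = A^\tau_\tau \cdot c(\tau) \cdot a(\tau).
\]
Applying the obvious bound $|A^\tau_\tau| \leq \|A(\tilde{e}_\tau)\| \leq \|A\|\cdot\|\tilde{e}_\tau\| = \|A\|$, together with $|c(\tau)|=1$ and Lemma~\ref{lem bounds on norms} giving $|a(\tau)| = |t|^{\mathrm{inv}(\tau)}$, I conclude
\[
\bigl|(A(\tilde{e}_\tau),\tilde{e}_\tau)_\lambda\bigr| \leq \|A\| \cdot |t|^{\mathrm{inv}(\tau)}.
\]

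Third, convergence. Since $|t|<1$ and Lemma~\ref{lem bounds on inv} implies that only finitely many $\tau \in \SYT_\infty(\lambda)$ satisfy $\mathrm{inv}(\tau) \leq k$ for any fixed $k$, the scalars $|t|^{\mathrm{inv}(\tau)}$ tend to $0$ along any enumeration of $\SYT_\infty(\lambda)$. Thus the individual terms $(A(\tilde{e}_\tau),\tilde{e}_\tau)_\lambda$ tend to $0$, which over the non-archimedean field $\bF$ is exactly the criterion for convergence of the series. Finally, the ultrametric inequality yields
\[
\Bigl|\sum_{\tau \in \SYT_\infty(\lambda)} (A(\tilde{e}_\tau),\tilde{e}_\tau)_\lambda \Bigr| \leq \sup_{\tau} \bigl|(A(\tilde{e}_\tau),\tilde{e}_\tau)_\lambda\bigr| \leq \|A\|,
\]
using that $\mathrm{inv}(\tau_\lambda) = 0$ so the supremum of $|t|^{\mathrm{inv}(\tau)}$ is $1$. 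The only subtle step is the content-gap calculation showing $|c(\tau)| = 1$; everything else is an immediate application of prior lemmas plus the ultrametric property.
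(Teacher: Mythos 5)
Your proof is correct and follows essentially the same route as the paper: bound each term by $\|A\|\cdot|t|^{\mathrm{inv}(\tau)}$ using $|a(\tau)|=|t|^{\mathrm{inv}(\tau)}$, invoke Lemma~\ref{lem bounds on inv} for the non-archimedean convergence criterion, and finish with the ultrametric inequality. The only difference is that you explicitly verify $\|\tilde{e}_\tau\|=1$ via the content-gap bound $c(\square_2)-c(\square_1)\geq 2$ for inversion pairs, a point the paper states without proof; this is a welcome addition rather than a deviation.
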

\begin{proof}
    Let $(a_{\tau,\gamma})_{\tau, \gamma \in \SYT_{\infty}(\lambda)}$ be the unique scalars such that for all $\tau \in \SYT_{\infty}(\lambda)$, $A(\tilde{e}_{\tau}) = \sum_{\gamma\in \SYT_{\infty}(\lambda)} a_{\tau,\gamma} \tilde{e}_{\gamma}.$ Then for $\tau \in \SYT_{\infty}(\lambda),$ 
    \begin{align*}
        |( A(\tilde{e}_{\tau}),\tilde{e}_{\tau})_{\lambda}| &= |\alpha_{\tau,\tau} a(\tau)|\\
        &\leq |a(\tau)|\sup_{\gamma\in \SYT_{\infty}(\lambda)} |\alpha_{\tau,\gamma}| \\
        &= |a(\tau)| \cdot ||A(\tilde{e}_{\tau})||\\
        &\leq |a(\tau)| \cdot ||A||\\
        &= t^{\mathrm{inv}(\tau)}\cdot ||A||.\\
    \end{align*}
    From Lemma \ref{lem bounds on inv}, for all $k \geq 0$ there exist only finitely many $\tau \in \SYT_{\infty}(\lambda)$ such that $\mathrm{inv}(\tau) \leq k$. Therefore, for all $c > 0$ there exists only finitely many $\tau \in \SYT_{\infty}(\lambda)$ such that $t^{\mathrm{inv}(\tau)}\cdot ||A|| \geq c.$ But since $|( A(\tilde{e}_{\tau}),\tilde{e}_{\tau})_{\lambda}| \leq t^{\mathrm{inv}(\tau)}\cdot ||A||$ for all $\tau \in \SYT_{\infty}(\lambda),$ there exist only finitely many $\tau \in \SYT_{\infty}(\lambda)$ such that $|( A(\tilde{e}_{\tau}),\tilde{e}_{\tau})_{\lambda}| > c.$ Therefore, the series $\sum_{\tau \in \SYT_{\infty}(\lambda)} (A(\tilde{e}_{\tau}),\tilde{e}_{\tau})_{\lambda}$ converges in $\bF.$ Lastly, 
    $$|\sum_{\tau \in \SYT_{\infty}(\lambda)} (A(\tilde{e}_{\tau}),\tilde{e}_{\tau})_{\lambda}| \leq \sup_{\tau \in \SYT_{\infty}(\lambda)} |(A(\tilde{e}_{\tau}),\tilde{e}_{\tau})_{\lambda}| \leq \sup_{\tau \in \SYT_{\infty}(\lambda)} ||A|| \cdot |a(\tau)| \leq ||A||.$$
\end{proof}

\begin{defn}\label{defn regularized trace}
    Let $\lambda \in \Par.$ Define the \textbf{regularized trace} $\Gamma_{\lambda}:\cH_{\infty} \rightarrow \bF$ as the map 
    $$\Gamma_{\lambda}(X):= \sum_{\tau \in \SYT_{\infty}(\lambda)} (X(\tilde{e}_{\tau}),\tilde{e}_{\tau})_{\lambda}.$$
\end{defn}

\begin{remark}\label{remark vector-valued}
    Although the inner product $(-,-)_{\lambda}$ is (essentially) unique, we have made a very specific choice in defining the function $\Gamma_{\lambda}: \cH_{\infty} \rightarrow \bF.$ If we rescale the basis $\tilde{e}_{\tau}$, we would obtain another possible definition for the regularized trace of the representation $\hat{S}_{\lambda}.$ This ambiguity would not be present if we were working over $\mathbb{C}$ since in that case traces of trace-class operators on Hilbert spaces are basis-independent. However, Definition \ref{defn regularized trace} was chosen carefully to align with Theorem 6.12 of the author's prior paper \cite{BWMurnaghan}. In that context, certain sum-product identities were obtained using the combinatorics of vector-valued double affine Hecke algebra representations. The series which appear in that context are nearly identical to the values $\Gamma_{\lambda}(1).$ See Section \ref{section conj} for more details.
\end{remark}

We require the following symmetry of $\cH_{\infty}.$

\begin{lem}
    There exists a unique linear anti-automorphism $\iota: \cH_{\infty} \rightarrow \cH_{\infty}$ such that $\iota(T_i) = T_{i}$ for all $i \geq 1.$
\end{lem}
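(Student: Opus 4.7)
The plan is to construct $\iota$ on the free $\bF$-algebra $\mathcal{F}$ generated by symbols $T_1, T_2, \ldots$ and then show that it descends to the Hecke quotient $\cH_\infty = \mathcal{F}/I$, where $I$ is the two-sided ideal generated by the Hecke relations in Definition \ref{defn inf hecke}. On $\mathcal{F}$ there is a tautological linear anti-automorphism $\tilde{\iota}$: it is the unique linear map sending each monomial $T_{i_1} T_{i_2} \cdots T_{i_r}$ to $T_{i_r} \cdots T_{i_2} T_{i_1}$. This is clearly an involutive anti-automorphism of $\mathcal{F}$ fixing each $T_i$.

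Next, I would verify that $\tilde{\iota}(I) \subseteq I$, so that $\tilde{\iota}$ descends to a well-defined linear anti-automorphism $\iota$ on $\cH_\infty$. This is a short calculation generator-by-generator: for the quadratic relation,
\[
\tilde{\iota}\bigl((T_i - 1)(T_i + t)\bigr) = (T_i + t)(T_i - 1) = T_i^2 + (t-1)T_i - t = (T_i - 1)(T_i + t) \in I.
\]
For the far commutation relation with $|i-j| > 1$, $\tilde{\iota}(T_i T_j - T_j T_i) = T_j T_i - T_i T_j = -(T_iT_j - T_jT_i) \in I$. For the braid relation, $\tilde{\iota}(T_i T_{i+1} T_i - T_{i+1} T_i T_{i+1}) = T_i T_{i+1} T_i - T_{i+1} T_i T_{i+1} \in I$. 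Since the ideal $I$ is generated (as a two-sided ideal) by these elements and $\tilde{\iota}$ is an anti-automorphism, $\tilde{\iota}(I) \subseteq I$. Hence $\iota: \cH_\infty \to \cH_\infty$ is well-defined, linear, and satisfies $\iota(xy) = \iota(y)\iota(x)$ and $\iota(T_i) = T_i$.

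To finish existence, note that $\iota^2: \cH_\infty \to \cH_\infty$ is a linear algebra homomorphism fixing each generator $T_i$, hence $\iota^2 = \mathrm{id}_{\cH_\infty}$. In particular $\iota$ is a bijection, so it is an anti-automorphism in the strict sense. For uniqueness, any linear anti-automorphism $\iota'$ with $\iota'(T_i) = T_i$ must satisfy $\iota'(T_{i_1}\cdots T_{i_r}) = T_{i_r} \cdots T_{i_1}$ for every word in the generators, and since such words span $\cH_\infty$, linearity forces $\iota' = \iota$.

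There is no real obstacle here; the only content is the observation that each defining Hecke relation is palindromic up to a sign, so the anti-automorphism of the free algebra preserves the relation ideal. I would just present the three verifications above and remark that the result of course restricts to the usual anti-automorphism on each finite Hecke subalgebra $\cH_n \subseteq \cH_\infty$.
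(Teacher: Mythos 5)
Your proof is correct, but it takes a different route from the paper. The paper defines $\iota$ directly on the standard basis $\{T_{\sigma} \mid \sigma \in \mathfrak{S}_{\infty}\}$ by $\iota(T_{\sigma}) := T_{\sigma^{-1}}$, extends linearly, and then checks compatibility with the relations; your argument instead works on the free algebra, reverses words, verifies that each defining relation is sent into the relation ideal, and descends to the quotient. Your approach is the more self-contained of the two: it does not presuppose that the $T_{\sigma}$ form a basis of $\cH_{\infty}$, and it makes the multiplicativity check $\iota(xy) = \iota(y)\iota(x)$ automatic rather than something to be verified on products of basis elements. What the paper's formulation buys is the explicit identity $\iota(T_{\sigma}) = T_{\sigma^{-1}}$, which is exactly what is used later in the proof that $\Gamma_{\lambda} \circ \iota = \Gamma_{\lambda}$. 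You should therefore append the one-line observation that your $\iota$ satisfies this formula: if $\sigma = s_{i_1}\cdots s_{i_r}$ is a reduced expression, then $s_{i_r}\cdots s_{i_1}$ is a reduced expression for $\sigma^{-1}$, so $\iota(T_{\sigma}) = \iota(T_{i_1}\cdots T_{i_r}) = T_{i_r}\cdots T_{i_1} = T_{\sigma^{-1}}$. With that addendum the two proofs deliver the same statement with the same downstream consequences.
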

\begin{proof}
    The set $\{T_{\sigma}| \sigma \in \mathfrak{S}_{\infty}\}$ forms a basis for $\cH_{\infty}.$ Define $\iota(T_{\sigma}):= T_{\sigma^{-1}}$ and extend linearly. It is straightforward to verify that $\iota$ is an anti-automorphism of $\cH_{\infty}$ by checking that $\iota$ preserves the relations defining $\cH_{\infty}.$
\end{proof}

The regularized trace maps are self-adjoint in the following sense.

\begin{prop}
    For all $\lambda \in \Par,$ $\Gamma_{\lambda}\circ \iota = \Gamma_{\lambda}.$
\end{prop}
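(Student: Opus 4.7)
The plan is to reduce this identity to the statement that $\iota$ agrees with the adjoint operation $(-)^*$ coming from $(-,-)_\lambda$, and then invoke symmetry of the inner product.

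First I would establish the key identification: for every $X \in \cH_\infty$ acting on $\hat{S}_\lambda$, the operator $X$ is adjoint-able with respect to $(-,-)_\lambda$ and $X^* = \iota(X)$. By Theorem \ref{thm unique inner product}, each $T_i$ is self-adjoint, so $T_i^* = T_i = \iota(T_i)$. Using Lemma \ref{lem adjoint properties}, products of adjoint-able operators are adjoint-able with $(AB)^* = B^* A^*$. Therefore for a reduced expression $\sigma = s_{i_1} \cdots s_{i_r}$, the operator $T_\sigma = T_{i_1} \cdots T_{i_r}$ satisfies
\[
T_\sigma^* = T_{i_r}^* \cdots T_{i_1}^* = T_{i_r} \cdots T_{i_1}.
\]
Since $\sigma^{-1} = s_{i_r} \cdots s_{i_1}$ is a reduced expression for $\sigma^{-1}$, the right-hand side equals $T_{\sigma^{-1}} = \iota(T_\sigma)$. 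Extending linearly (and using the anti-automorphism property of both sides), $X^* = \iota(X)$ for every $X \in \cH_\infty$.

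Next, for each $\tau \in \SYT_\infty(\lambda)$ I would apply the defining property of the adjoint together with the fact that $(-,-)_\lambda$ is a \emph{symmetric} bilinear form (no conjugation, since we are over $\bF$):
\[
(\iota(X)(\tilde{e}_\tau), \tilde{e}_\tau)_\lambda = (X^*(\tilde{e}_\tau), \tilde{e}_\tau)_\lambda = (\tilde{e}_\tau, X(\tilde{e}_\tau))_\lambda = (X(\tilde{e}_\tau), \tilde{e}_\tau)_\lambda.
\]
Summing over $\tau \in \SYT_\infty(\lambda)$, where convergence of both sides is guaranteed by Proposition \ref{prop traces converge}, yields $\Gamma_\lambda(\iota(X)) = \Gamma_\lambda(X)$.

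There is essentially no obstacle here; the identity is a formal consequence of the interplay between $\iota$, self-adjointness of the generators $T_i$, and symmetry of the bilinear form. The only point that requires some care is the initial verification that reduced expressions for $\sigma$ and $\sigma^{-1}$ match up correctly under reversal, so that $T_\sigma^* = T_{\sigma^{-1}}$ regardless of the choice of reduced expression; this is standard and follows immediately from the well-definedness of $T_\sigma$ (a consequence of Matsumoto's theorem together with the braid relations in Definition \ref{defn inf hecke}).
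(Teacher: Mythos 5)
Your proposal is correct and follows essentially the same route as the paper: both arguments reduce the claim to the identity $T_{\sigma}^{*} = T_{\sigma^{-1}}$ (via Theorem \ref{thm unique inner product} and Lemma \ref{lem adjoint properties}) and then use symmetry of $(-,-)_{\lambda}$ termwise in the defining series, with convergence supplied by Proposition \ref{prop traces converge}. The only difference is that you spell out the reduced-expression bookkeeping for $T_{\sigma}^{*}=T_{\sigma^{-1}}$, which the paper leaves implicit.
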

\begin{proof}
    Let $\sigma \in \mathfrak{S}_{\infty}$ and $\lambda \in \Par.$ From Theorem \ref{thm unique inner product} and Lemma \ref{lem adjoint properties}, we know that $T_{\sigma}^* = T_{\sigma^{-1}}$. Therefore, 
    \begin{align*}
        \Gamma_{\lambda}(T_{\sigma}) &= \sum_{\tau \in \SYT_{\infty}(\lambda)} (T_{\sigma}(\tilde{e}_{\tau}),\tilde{e}_{\tau})_{\lambda}\\
        &= \sum_{\tau \in \SYT_{\infty}(\lambda)} (\tilde{e}_{\tau},T_{\sigma}^*(\tilde{e}_{\tau}))_{\lambda}\\
        &= \sum_{\tau \in \SYT_{\infty}(\lambda)} (\tilde{e}_{\tau},T_{\sigma^{-1}}(\tilde{e}_{\tau}))_{\lambda}\\
        &= \sum_{\tau \in \SYT_{\infty}(\lambda)} (T_{\sigma^{-1}}(\tilde{e}_{\tau}),\tilde{e}_{\tau})_{\lambda}\\
        &= \Gamma_{\lambda}(T_{\sigma^{-1}})\\
        &= \Gamma_{\lambda}(\iota(T_{\sigma})).\\
    \end{align*}
    Therefore, by the linearity of $\Gamma_{\lambda},$ $\Gamma_{\lambda}\circ \iota = \Gamma_{\lambda}.$
\end{proof}

We may find formulas for the values of $\Gamma_{\lambda}$ at certain nice elements of $\cH_{\infty}.$

\begin{prop}\label{prop gamma values at theta}
    Let $a_1,a_2,\dots, a_r \geq 0$ and $\lambda \in \Par$. Then 
    $\Gamma_{\lambda}(\theta_{1}^{a_1}\cdots \theta_{r}^{a_r})$ is explicitly given by the following:
    $$\sum_{\tau \in \SYT_{\infty}}  t^{a_1c_{\tau}(1)+\dots + a_{r}c_{\tau}(r)+\mathrm{inv(\tau)}}\prod_{(\square_1,\square_2) \in \mathrm{Inv}(\tau)} \left( \frac{1-t^{c(\square_2)-c(\square_1)-1}}{1-t^{c(\square_2)-c(\square_1)+1}} \right).$$
\end{prop}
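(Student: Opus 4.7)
The proof is a direct computation that exploits two facts already established: first, the Jucys–Murphy elements act diagonally on the $e_\tau$ (and hence $\tilde e_\tau$) basis, so commuting monomials in the $\theta_i$ are essentially scalars on each basis vector; second, the inner product $(-,-)_\lambda$ is also diagonalized by the $e_\tau$ basis, with diagonal values $a(\tau)$ given by an explicit product formula.

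The plan is as follows. First, I would observe that since $\tilde e_\tau$ is a scalar multiple of $e_\tau$ and $\theta_i(e_\tau) = t^{c_\tau(i)} e_\tau$ by Definition \ref{irreps for finite hecke defn}, we have
\[
\theta_1^{a_1}\cdots\theta_r^{a_r}(\tilde e_\tau) = t^{a_1 c_\tau(1) + \cdots + a_r c_\tau(r)}\, \tilde e_\tau.
\]
Thus $(\theta_1^{a_1}\cdots\theta_r^{a_r}(\tilde e_\tau), \tilde e_\tau)_\lambda = t^{a_1 c_\tau(1) + \cdots + a_r c_\tau(r)} (\tilde e_\tau, \tilde e_\tau)_\lambda$, and the proposition reduces to computing $(\tilde e_\tau,\tilde e_\tau)_\lambda$ for each $\tau$.

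Next, I would combine the definition of $\tilde e_\tau$ with the explicit formula for $a(\tau)$ recorded just before the final example of the previous subsection. Writing $d := c(\square_2) - c(\square_1)$ for an inversion pair $(\square_1,\square_2) \in \mathrm{Inv}(\tau)$, the rescaling contributes a factor $\bigl(\tfrac{1-t^d}{1-t^{d+1}}\bigr)^2$ per inversion pair, while $a(\tau)$ contributes $t^{\mathrm{inv}(\tau)}$ times $\tfrac{(1-t^{d-1})(1-t^{d+1})}{(1-t^d)^2}$ per inversion pair. Multiplying these per-pair factors, the $(1-t^d)^2$ cancels and one power of $(1-t^{d+1})$ cancels, leaving exactly $\tfrac{1-t^{d-1}}{1-t^{d+1}}$ per inversion pair. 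Hence
\[
(\tilde e_\tau,\tilde e_\tau)_\lambda = t^{\mathrm{inv}(\tau)} \prod_{(\square_1,\square_2) \in \mathrm{Inv}(\tau)} \frac{1-t^{c(\square_2)-c(\square_1)-1}}{1-t^{c(\square_2)-c(\square_1)+1}}.
\]

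Finally, summing over $\tau \in \SYT_\infty(\lambda)$ using Definition \ref{defn regularized trace} and the convergence guaranteed by Proposition \ref{prop traces converge} gives the claimed formula. There is no real obstacle here; the only mild subtlety is the bookkeeping of the three product factors and verifying they collapse as claimed, which is purely mechanical.
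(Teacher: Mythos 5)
Your proposal is correct and follows essentially the same route as the paper: the diagonal action of the Jucys--Murphy elements on $\tilde{e}_{\tau}$ together with the evaluation $(\tilde{e}_{\tau},\tilde{e}_{\tau})_{\lambda} = t^{\mathrm{inv}(\tau)}\prod_{(\square_1,\square_2)\in\mathrm{Inv}(\tau)}\frac{1-t^{c(\square_2)-c(\square_1)-1}}{1-t^{c(\square_2)-c(\square_1)+1}}$. The only difference is that you spell out the cancellation between the rescaling factors defining $\tilde{e}_{\tau}$ and the product formula for $a(\tau)$, which the paper simply asserts.
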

\begin{proof}
    This follows from the simple fact that $\theta_{1}^{a_1}\cdots \theta_{r}^{a_r}(\tilde{e}_{\tau}) = t^{a_1c_1(\tau)+\dots + a_{r}c_{r}(\tau)}\tilde{e}_{\tau}$ and $$(\tilde{e}_{\lambda},\tilde{e}_{\lambda})_{\lambda} =  t^{\mathrm{inv(\tau)}}\prod_{(\square_1,\square_2) \in \mathrm{Inv}(\tau)} \left( \frac{1-t^{c(\square_2)-c(\square_1)-1}}{1-t^{c(\square_2)-c(\square_1)+1}} \right)$$
    for all $\tau \in \SYT_{\infty}$.
\end{proof}

\begin{prop}
    Let $\sigma \in \mathfrak{S}_{\infty}$ be an involution with odd length and $\mathrm{char} \bF \neq 2$. Then for all $\lambda \in \Par$, $\Gamma_{\lambda}(\varphi_{\sigma}) = 0.$
\end{prop}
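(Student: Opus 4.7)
The plan is to use the previously established self-adjointness $\Gamma_\lambda \circ \iota = \Gamma_\lambda$ together with a sign computation showing that $\iota(\varphi_\sigma) = (-1)^{\ell(\sigma)}\varphi_\sigma$ for involutions $\sigma$. Once this is in place, odd length forces $\Gamma_\lambda(\varphi_\sigma) = -\Gamma_\lambda(\varphi_\sigma)$, and dividing by $2$ (available since $\mathrm{char}\,\bF \neq 2$) gives the result. Implicit in the statement is that $\varphi_\sigma := \varphi_{i_1}\cdots \varphi_{i_r}$ for a reduced expression $\sigma = s_{i_1}\cdots s_{i_r}$ is well-defined, which follows from the braid relations for the $\varphi_i$ listed in Proposition~\ref{additional relations for finite hecke}.

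The first step is to show $\iota(\varphi_i) = -\varphi_i$ for every $i\geq 1$. Since $\iota$ fixes each $T_j$, it fixes $T_j^{-1}$, and since the explicit expression $\theta_i = t^{i-1}T_{i-1}^{-1}\cdots T_1^{-1}T_1^{-1}\cdots T_{i-1}^{-1}$ is a palindromic word in the $T_j^{-1}$'s, the anti-automorphism $\iota$ reverses it to itself, giving $\iota(\theta_i) = \theta_i$. Applying $\iota$ to $\varphi_i = tT_i^{-1}\theta_i - t\theta_i T_i^{-1}$ and reversing the order of the two factors yields $\iota(\varphi_i) = t\theta_i T_i^{-1} - tT_i^{-1}\theta_i = -\varphi_i$.

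Next, since $\iota$ is an anti-automorphism,
\[
\iota(\varphi_\sigma) = \iota(\varphi_{i_1}\cdots \varphi_{i_r}) = \iota(\varphi_{i_r})\cdots \iota(\varphi_{i_1}) = (-1)^r \varphi_{i_r}\cdots \varphi_{i_1}.
\]
The word $s_{i_r}\cdots s_{i_1}$ is a reduced expression for $\sigma^{-1}$, so $\varphi_{i_r}\cdots \varphi_{i_1} = \varphi_{\sigma^{-1}}$. For an involution $\sigma = \sigma^{-1}$, this gives $\iota(\varphi_\sigma) = (-1)^{\ell(\sigma)}\varphi_\sigma$, and under the odd-length hypothesis, $\iota(\varphi_\sigma) = -\varphi_\sigma$.

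Finally, applying the previously proved identity $\Gamma_\lambda \circ \iota = \Gamma_\lambda$ (and linearity of $\Gamma_\lambda$) gives $\Gamma_\lambda(\varphi_\sigma) = \Gamma_\lambda(\iota(\varphi_\sigma)) = -\Gamma_\lambda(\varphi_\sigma)$, so $2\,\Gamma_\lambda(\varphi_\sigma) = 0$, and the hypothesis $\mathrm{char}\,\bF \neq 2$ forces $\Gamma_\lambda(\varphi_\sigma) = 0$. The only step with any content is the palindromic observation used to verify $\iota(\theta_i) = \theta_i$; everything else is a mechanical consequence of the anti-automorphism property and the formulas already assembled in the paper.
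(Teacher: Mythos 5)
Your proof is correct and follows essentially the same route as the paper: establish that $\varphi_\sigma$ is sent to $(-1)^{\ell(\sigma)}\varphi_{\sigma^{-1}} = -\varphi_\sigma$ by the relevant order-reversing involution, then combine $\Gamma_\lambda\circ\iota = \Gamma_\lambda$ with $\mathrm{char}\,\bF\neq 2$. The only cosmetic difference is that the paper derives the sign from skew-adjointness of the $\varphi_i$ with respect to $(-,-)_\lambda$ (via Lemma~\ref{lem adjoint properties}), whereas you compute $\iota(\varphi_i)=-\varphi_i$ directly in the algebra using the palindromic formula for $\theta_i$ --- a nice touch that makes the step $\Gamma_\lambda(\iota(\varphi_\sigma))=\Gamma_\lambda(\varphi_\sigma)$ apply verbatim, and your remark on the well-definedness of $\varphi_\sigma$ via the braid relations is a worthwhile addition.
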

\begin{proof}
  Using Lemma \ref{lem adjoint properties}, we know that for all $i \geq 1$, since $T_i$ and $\theta_{i}$ are self-adjoint with respect to every $(-,-)_{\lambda}$, $\varphi_i = tT_{i}^{-1}\theta_i - \theta_i tT_{i}^{-1}$ is skew-adjoint with respect to each $(-,-)_{\lambda}.$ Since $\sigma$ is an involution, $
  \varphi_{\sigma}^* = (-1)^{\ell(\sigma)} \varphi_{\sigma^{-1}} = (-1)^{\ell(\sigma)} \varphi_{\sigma} = -\varphi_{\sigma}$ since $\ell(\sigma)$ is odd. Then since $\Gamma_{\lambda} \circ \iota = \Gamma_{\lambda},$ $\Gamma_{\lambda}(\varphi_{\sigma}) = -\Gamma_{\lambda}(\varphi_{\sigma})$ so as $\mathrm{char} \bF \neq 2$, $\Gamma_{\lambda}(\varphi_{\sigma}) = 0.$
\end{proof}

Here we investigate integrality properties of the $\Gamma_{\lambda}$ maps. Let $\mathcal{O}:= \{ x\in \bF |~~ |x| \leq 1 \}.$ Define $\cH_{\infty}^{+}:= \mathcal{O}.\{T_{\sigma} | \sigma \in \mathfrak{S}_{\infty}\}.$

\begin{thm}\label{thm integrality}
    If $X \in \cH_{\infty}^{+},$ then for all $\lambda \in \Par$, $\Gamma_{\lambda}(X) \in \mathcal{O}.$
\end{thm}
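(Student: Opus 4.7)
The plan is to reduce the claim directly to the norm bound from Proposition \ref{prop traces converge}. Namely, Proposition \ref{prop traces converge} already gives $|\Gamma_{\lambda}(X)| \leq \|X\|$ where the norm is the operator norm of $X$ acting on $\hat{S}_{\lambda}$. So it suffices to show that $X \in \cH_{\infty}^{+}$ implies $\|X\| \leq 1$, which will place $\Gamma_{\lambda}(X)$ in $\mathcal{O}$.

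First I would establish that $\|T_{\sigma}\| \leq 1$ for every $\sigma \in \mathfrak{S}_{\infty}$. By Lemma \ref{lem operators are bounded}, each generator satisfies $\|T_i\| \leq 1$ (in fact equals $1$). Choosing any reduced expression $\sigma = s_{i_1}\cdots s_{i_{\ell}}$ and using submultiplicativity of the operator norm gives
\[
\|T_{\sigma}\| \;=\; \|T_{i_1}\cdots T_{i_{\ell}}\| \;\leq\; \prod_{k=1}^{\ell} \|T_{i_k}\| \;\leq\; 1.
\]
Note this bound is independent of the chosen reduced expression.

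Next I would use the non-archimedean (ultrametric) structure. An arbitrary element $X \in \cH_{\infty}^{+}$ is by definition a finite $\mathcal{O}$-linear combination $X = \sum_{\sigma} c_{\sigma} T_{\sigma}$ with $c_{\sigma} \in \mathcal{O}$, i.e.\ $|c_{\sigma}| \leq 1$. For any $v \in \hat{S}_{\lambda}$, the ultrametric triangle inequality yields
\[
\|X(v)\| \;=\; \Bigl\| \sum_{\sigma} c_{\sigma} T_{\sigma}(v) \Bigr\| \;\leq\; \max_{\sigma} |c_{\sigma}| \cdot \|T_{\sigma}(v)\| \;\leq\; \max_{\sigma} |c_{\sigma}| \cdot \|T_{\sigma}\| \cdot \|v\| \;\leq\; \|v\|,
\]
so $\|X\| \leq 1$. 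Applying Proposition \ref{prop traces converge} gives $|\Gamma_{\lambda}(X)| \leq \|X\| \leq 1$, that is, $\Gamma_{\lambda}(X) \in \mathcal{O}$, as required.

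The argument is essentially routine given the infrastructure already in place. The only potentially subtle point is the submultiplicativity step needing to hold uniformly in the reduced expression, but this is automatic from the definition of the operator norm. The heart of the integrality statement really lies in the earlier bound $|\Gamma_{\lambda}(X)| \leq \|X\|$ from Proposition \ref{prop traces converge}, which itself relied on Lemma \ref{lem bounds on inv} forcing the series to converge with controlled norm. This proposition plus the ultrametric property makes the present theorem a short corollary.
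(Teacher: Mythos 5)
Your argument is correct, but it is a genuinely different route from the paper's. You deduce everything from the norm bound $|\Gamma_{\lambda}(A)| \leq ||A||$ of Proposition \ref{prop traces converge}: since $||T_i|| \leq 1$ by Lemma \ref{lem operators are bounded}, submultiplicativity of the operator norm gives $||T_{\sigma}|| \leq 1$, and the ultrametric inequality then gives $||X|| \leq 1$ for any finite $\mathcal{O}$-linear combination $X = \sum_{\sigma} c_{\sigma}T_{\sigma}$, so $\Gamma_{\lambda}(X) \in \mathcal{O}$. All the ingredients you cite are in place, and the independence from the choice of reduced expression is a non-issue since $T_{\sigma}$ is a single well-defined operator. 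The paper instead reduces to $X = T_{\sigma}$, splits $T_{\sigma} = 1 + (T_{\sigma}-1)$, reads off $\Gamma_{\lambda}(1) \in \mathcal{O}$ from the explicit series of Proposition \ref{prop gamma values at theta}, and handles $\Gamma_{\lambda}(T_{\sigma}-1)$ by observing that $(T_{\sigma}-1)(\tilde{e}_{\tau}) = 0$ for all but finitely many $\tau$ and that the structure constants in Remark \ref{remark explicit action} all lie in $\mathcal{O}$. Your approach is shorter and more conceptual, pushing all the work into the already-proved trace estimate; the paper's computational decomposition is heavier here but is not wasted, since the same finite-support-plus-integral-structure-constants argument is reused verbatim in the later rationality proposition, where no operator-norm shortcut is available.
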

\begin{proof}
    It suffices to show that $\Gamma_{\lambda}(T_{\sigma}) \in \mathcal{O}$ for all $\sigma \in \mathfrak{S}_{\infty}$ and $\lambda \in \Par.$ Fix $\lambda \in \Par$ and $\sigma \in \mathfrak{S}_{\infty}$. It is evident from Proposition \ref{prop gamma values at theta} that $\Gamma_{\lambda}(1) \in \mathcal{O}$ so it suffices to show that $\Gamma_{\lambda}(T_{\sigma}-1) \in \mathcal{O}.$ To this end note that $(T_{\sigma}-1)(\tilde{e}_{\tau}) = 0$ for all but finitely many $\tau \in \SYT_{\infty}(\lambda).$ Furthermore, from Remark \ref{remark explicit action}, we see that the structure constants of the action of $\cH_{\infty}$ on $S_{\lambda^{(\infty)}}$ are all in $\mathcal{O}$. More specifically, 
    \begin{itemize}
        \item if $s_{i}(\tau) > \tau,$ then $c_{\tau}(i+1) -c_{\tau}(i) > 1$ so 
        $$T_i(e_{\tau}) = e_{s_{i}(\tau)} + \frac{1-t}{1-t^{c_{\tau}(i+1)-c_{\tau}(i)}} e_{\tau} \in \mathcal{O}.\{e_{\tau}, e_{s_{i}(\tau)}\},$$
        \item if $s_{i}(\tau) < \tau$, then $c_{\tau}(i) -c_{\tau}(i+1) > 1$ so 
        $$T_{i}(e_{\tau}) = t\frac{(1-t^{c_{\tau}(i) - c_{\tau}(i+1) -1})(1-t^{c_{\tau}(i)-c_{\tau}(i+1)+1})}{(1-t^{c_{\tau}(i)-c_{\tau}(i+1)})^2} e_{s_{i}(\tau)} - t^{c_{\tau}(i)-c_{\tau}(i+1)} \frac{1-t}{1-t^{c_{\tau}(i)-c_{\tau}(i+1)}}e_{\tau} \in \mathcal{O}.\{e_{\tau}, e_{s_{i}(\tau)}\},$$
        \item if the labels $i,i+1$ are in the same row in $\tau$, then $T_i(e_{\tau}) = e_{\tau} \in \mathcal{O}.\{e_{\tau}\},$
        \item and if the labels $i,i+1$ are in the same column in $\tau$, then $T_i(e_{\tau}) = -te_{\tau} \in \mathcal{O}\{e_{\tau}\}.$
    \end{itemize}
    Therefore, for all $\tau \in \SYT_{\infty}(\lambda)$, $(T_{\sigma}-1)(e_{\tau}) \in \mathcal{O}.\{e_{\gamma}| \gamma \in \SYT_{\infty}(\lambda)\}$ and similarly, since for all $\gamma \in \SYT_{\infty}(\lambda),$ $\tilde{e}_{\gamma}$ and $e_{\gamma}$ differs by a scalar of norm $1,$ $(T_{\sigma}-1)(\tilde{e}_{\tau}) \in \mathcal{O}.\{\tilde{e}_{\gamma}| \gamma \in \SYT_{\infty}(\lambda)\}.$ Thus, 
    $\Gamma_{\lambda}(T_{\sigma}-1)$ is a finite sum of the form $\sum_{\tau} b_{\tau} (\tilde{e}_{\tau},\tilde{e}_{\tau})_{\lambda}$ for $b_{\tau} \in \mathcal{O}.$ Since $(\tilde{e}_{\tau},\tilde{e}_{\tau})_{\lambda} \in \mathcal{O}$, it follows that $\Gamma_{\lambda}(T_{\sigma}-1) \in \mathcal{O}.$
\end{proof}

\subsection{Rationality Theorem}\label{section conj}

 The values $\Gamma_{\lambda}(X)$ are already interesting in the case $X = 1.$ These values represent a kind of regularized dimension of the representations $\hat{S}_{\lambda}.$ They are given by 
$$\Gamma_{\lambda}(1) = \sum_{\tau \in \SYT_{\infty}(\lambda)} t^{\mathrm{inv}(\tau)} \prod_{(\square_1,\square_2) \in \mathrm{Inv}(\tau)} \left(  \frac{1-t^{c(\square_2)-c(\square_1)-1}}{1-t^{c(\square_2)-c(\square_1)+1}} \right).$$ Note that trivially $\Gamma_{\emptyset}(1) = 1.$ The simplest nontrivial example $\lambda = (1)$ gives 
$$\Gamma_{(1)}(1) = 1 + t\left( \frac{1-t}{1-t^3}\right) + t^2 \left( \frac{1-t}{1-t^3} \right)\left( \frac{1-t^2}{1-t^4} \right)+ t^3 \left( \frac{1-t}{1-t^3} \right)\left( \frac{1-t^2}{1-t^4} \right)\left( \frac{1-t^3}{1-t^5} \right) + \dots$$ which remarkably telescopes to give $\Gamma_{(1)}(1) = 1+t.$ This is very similar to Theorem 6.12 of \cite{BWMurnaghan} but that result, unfortunately, strictly does not apply in this situation. In some sense, the values $\Gamma_{\lambda}(1)$ represent a degenerate case of that result. Nevertheless, the author's previous result suggests that the values $\Gamma_{\lambda}(1)$ should all be rational expressions in the parameter $t.$ That is to say, $\Gamma_{\lambda}(1) \in k(t)$ where $k$ is the smallest field in $\mathbb{F}$ containing $1.$ This is indeed the case. In order to prove this, we require the following definitions.

\begin{defn}
    Let $\lambda \in \Par$ be a partition. A box $\square \in \lambda$ is \textbf{removable} if $\lambda \setminus \square$ is the diagram of a partition. For $\tau \in \SYT_{\infty}(\lambda)$ we define the \textbf{rank} of $\tau$, $\mathrm{rk}(\tau)$, to be the minimum $m \geq n_{\lambda}$ such that the infinite row $\lambda^{(\infty)}/\lambda^{(m)}$ is labeled consecutively by $\tau.$ As a shorthand, we write 
    $\eta(\tau):= t^{\mathrm{inv}(\tau)} \prod_{(\square_1,\square_2) \in \mathrm{Inv}(\tau)} \left(  \frac{1-t^{c(\square_2)-c(\square_1)-1}}{1-t^{c(\square_2)-c(\square_1)+1}} \right).$
\end{defn}

\begin{thm}\label{rationality thm}
    For all $\lambda \in \Par$, $\Gamma_{\lambda}(1) \in k(t).$ 
\end{thm}
\begin{proof}
    Our strategy is to prove a recursive formula for $\Gamma_{\lambda}(1)$ which inductively implies that $\Gamma_{\lambda}(1) \in k(t)$. Let $\lambda \in \Par \setminus \{\emptyset\}.$ Recall that the diagram $\lambda^{(\infty)}$ contains a copy of the diagram $\lambda$ namely, $\lambda \equiv \lambda^{(\infty)}/\emptyset^{(\infty)}$. It is easy to see that for all $\tau \in \SYT_{\infty}(\lambda)$, the unique box in $\lambda^{(\infty)}$ labeled $\mathrm{rk}(\tau)$ by $\tau$ is a removable box in $\lambda$. For each removable box $\square_{0}$ of $\lambda$ define $S_{\square_{0}}$ as the set of $\tau \in \SYT_{\infty}(\lambda)$ such that $\tau(\square_0) = \mathrm{rk}(\tau).$ Necessarily,
    $$\SYT_{\infty}(\lambda) = \bigsqcup_{\square_0~ \text{removable}} S_{\square_0}$$ so 

    $$\Gamma_{\lambda}(1) = \sum_{\tau \in \SYT_{\infty}(\lambda)} \eta(\tau) = \sum_{\square_0~ \text{removable}} \sum_{\tau\in S_{\square_0}} \eta(\tau).$$ 

    Let $\square_0$ be a removable box of $\lambda$ considered in the diagram $\lambda^{(\infty)}$. Given a pair $(m,\tau')$ with $m\geq n_{\lambda}$ and $\tau' \in \SYT_{\infty}(\lambda \setminus \square_0)$ we construct a corresponding $\tau\in \SYT_{\infty}(\lambda)$ by defining $\tau(\square_0) := m$ and filling the boxes of $\lambda^{(\infty)}\setminus \{\square_0\} = (\lambda\setminus \{\square_0\})^{(\infty)}$ using the remaining labels $\{1,2,\dots\} \setminus \{m\}$ following the ordering determined by $\tau'.$ This is clearly a reversible construction so $S_{\square_0}$ is in bijection with $\{n_{\lambda},n_{\lambda}+1,\dots\} \times \SYT_{\infty}(\lambda \setminus \{\square_0\})$ via $(m,\tau') \mapsto \tau.$ It is straightforward casework to determine the following:

    $$\mathrm{Inv}(\tau) = \mathrm{Inv}(\tau') \sqcup \{ (\square_0,\square)| \square \in \lambda^{(\infty)}/\lambda^{(m)}\} \sqcup \{(\square_0,\square)|\square ~ \text{strictly to the right of} ~ \square_0 ~ \text{in} ~ \lambda^{(n_{\lambda})}\}.$$
    From this we directly compute,
    \begin{align*}
        \eta(\tau) &= \eta(\tau') t^{m-n_{\lambda}} \left(\frac{1-t^{\lambda_1-c(\square_0)-1}}{1-t^{\lambda_1-c(\square_0)+1}}\right)\cdots \left(\frac{1-t^{\lambda_1-c(\square_0)-1+m-n_{\lambda}-1}}{1-t^{\lambda_1-c(\square_0)+1+m-n_{\lambda}-1}}\right) \\
        & \times \prod_{\square_0 \rightarrow \square} t^{d(\square)+1}\left(\frac{1-t^{c(\square)-c(\square_0)-1}}{1-t^{c(\square)-c(\square_0)+1}}\right) \cdots \left(\frac{1-t^{c(\square)-c(\square_0)-1+d(\square)}}{1-t^{c(\square)-c(\square_0)+1+d(\square)}}\right)\\
    \end{align*}
    where $\square_0 \rightarrow \square$ ranges over boxes of $\lambda$ which are to the right of $\square_0$ and at the bottom of their column and $d(\square)$ is the number of boxes of $\lambda$ weakly above $\square$ in the same column (i.e., including $\square$ itself).

    Therefore, for all removable $\square_0,$
    \begin{align*}
        &\sum_{\tau\in S_{\square_0}} \eta(\tau) \\
        &=\sum_{m \geq n_{\lambda}} \sum_{\tau' \in \SYT_{\infty}(\lambda\setminus \{\square_0\})}\eta(\tau') t^{m-n_{\lambda}} \left(\frac{1-t^{\lambda_1-c(\square_0)-1}}{1-t^{\lambda_1-c(\square_0)+1}}\right)\cdots \left(\frac{1-t^{\lambda_1-c(\square_0)-1+m-n_{\lambda}}}{1-t^{\lambda_1-c(\square_0)+1+m-n_{\lambda}}}\right) \\
        & \times \prod_{\square_0 \rightarrow \square} t^{d(\square)+1}\left(\frac{1-t^{c(\square)-c(\square_0)-1}}{1-t^{c(\square)-c(\square_0)+1}}\right) \cdots \left(\frac{1-t^{c(\square)-c(\square_0)-1+d(\square)}}{1-t^{c(\square)-c(\square_0)+1+d(\square)}}\right)\\
        &= \Gamma_{\lambda \setminus \{\square_0\}}(1)\prod_{\square_0 \rightarrow \square} t^{d(\square)+1}\left(\frac{1-t^{c(\square)-c(\square_0)-1}}{1-t^{c(\square)-c(\square_0)+1}}\right) \cdots \left(\frac{1-t^{c(\square)-c(\square_0)-1+d(\square)}}{1-t^{c(\square)-c(\square_0)+1+d(\square)}}\right) \\
        & \times \sum_{m \geq n_{\lambda}} t^{m-n_{\lambda}} \left(\frac{1-t^{\lambda_1-c(\square_0)-1}}{1-t^{\lambda_1-c(\square_0)+1}}\right)\cdots \left(\frac{1-t^{\lambda_1-c(\square_0)-1+m-n_{\lambda}}}{1-t^{\lambda_1-c(\square_0)+1+m-n_{\lambda}}}\right).\\
    \end{align*}

Using a standard inductive argument on partial sums one may verify that
$$ \sum_{m \geq n_{\lambda}} t^{m-n_{\lambda}} \left(\frac{1-t^{\lambda_1-c(\square_0)-1}}{1-t^{\lambda_1-c(\square_0)+1}}\right)\cdots \left(\frac{1-t^{\lambda_1-c(\square_0)-1+m-n_{\lambda}}}{1-t^{\lambda_1-c(\square_0)+1+m-n_{\lambda}}}\right) = \frac{1-t^{\lambda_1-c(\square_0)}}{1-t}.$$ Note that here we are computing content values $c(\square)$ in $\lambda^{(n_{\lambda})}.$ If we instead compute the content of $\square \in \lambda$ in the diagram $\lambda$, we need to adjust accordingly. 

In the following formula, all relevant statistics are computed in the the diagram $\lambda$. Putting everything together we find:

\begin{align*}
    &\Gamma_{\lambda}(1) \\
    &= \sum_{\square_0~ \text{removable}} \Gamma_{\lambda \setminus \{\square_0\}}(1) \left( \frac{1-t^{\lambda_1-c(\square_0)+1}}{1-t} \right)\prod_{\square_0 \rightarrow \square} t^{d(\square)+1}\left(\frac{1-t^{c(\square)-c(\square_0)-1}}{1-t^{c(\square)-c(\square_0)+1}}\right) \cdots \left(\frac{1-t^{c(\square)-c(\square_0)-1+d(\square)}}{1-t^{c(\square)-c(\square_0)+1+d(\square)}}\right).\\
\end{align*}

Note that $\Gamma_{\emptyset}(1) = 1 \in k(t).$ Thus by induction on the size of $|\lambda|$ we see that $\Gamma_{\lambda}(1) \in k(t).$

\end{proof}

This results suggests that our choice of normalization (Definition \ref{normalization defn}) was reasonable. Most arbitrary choices for this normalization would not lead to rational values of $\Gamma_{\lambda}(1).$ 

\begin{example}
    
Here we use the above recursive formula to compute the values of $\Gamma_{\lambda}(1)$ for all $|\lambda|\leq 4:$

\begin{itemize}
    \item $\Gamma_{\emptyset}(1) = 1$
    \item $\Gamma_{(1)}(1) = 1+t$
    \item $\Gamma_{(2)}(1)= 1+2t+t^2$
    \item $\Gamma_{(1,1)}(1)= 1+2t+2t^2+t^3$
    \item $\Gamma_{(3)}(1) = 1 + 3 t + 3 t^2 + t^3$
     \item $\Gamma_{(2,1)}(1) = \frac{1+4t+9t^2+13t^3+12t^4+8t^5+4t^6+t^7}{1+t+t^2}$
    \item $\Gamma_{(1,1,1)}(1)= 1 + 3 t + 5 t^2 + 6 t^3 + 5 t^4 + 3 t^5 + t^6$
    \item $\Gamma_{(4)}(1) = 1 + 4 t + 6 t^2 + 4 t^3 + t^4$
    \item $\Gamma_{(3,1)}(1)= \frac{1 + 5 t + 15 t^2 + 33 t^3 + 54 t^4 + 68 t^5 + 68 t^6 + 56 t^7 + 39 t^8 + 21 t^9 + 7 t^{10} + t^{11}}{1+t+2t^2+t^3+t^4}$
    \item $\Gamma_{(2,2)}(1)= 1+4t+9t^2+13t^3+12t^4+8t^5+4t^6+t^7$
    \item $\Gamma_{(2,1,1)}(1) = \frac{1 + 4 t + 10 t^2 + 19 t^3 + 30 t^4 + 41 t^5 + 48 t^6 + 50 t^7 + 47 t^8 + 38 x^9 + 25 t^{10} + 13 t^{11} + 5 t^{12} + t^{13}}{1+t+2t^2+t^3+t^4}$
    \item $\Gamma_{(1,1,1,1)}(1)= 1 + 4 t + 9 t^2 + 15 t^3 + 20 t^4 + 22 t^5 + 20 t^6 + 15 t^7 + 9 t^8 + 4 t^9 + t^{10}.$
\end{itemize} 
\end{example}

As a corollary, we find that the values of $\Gamma_{\lambda}(T_{\sigma})$ are also always rational for all $\sigma \in \mathfrak{S}_{\infty}.$

\begin{cor}
     For all $\lambda \in \Par$ and $\sigma \in \mathfrak{S}_{\infty},$ $\Gamma_{\lambda}(T_{\sigma}) \in k(t).$ 
\end{cor}
\begin{proof}
    Note that $(T_{\sigma} -1)\tilde{e}_{\tau} = 0$ for all but finitely many $\tau \in \SYT_{\infty}(\lambda).$ Recall that the structure coefficients for the action of $\cH_{\infty}$ on $S_{\lambda^{(\infty)}}$ in the $e_{\tau}$-basis are rational functions of $t.$ Therefore, $\Gamma_{\lambda}(T_{\sigma}-1)$ is a finite sum of terms which are guaranteed to be rational functions of $t.$ Hence, $\Gamma_{\lambda}(T_{\sigma}-1) \in k(t).$ From Theorem \ref{rationality thm} $\Gamma_{\lambda}(1) \in k(t)$ so 
    $\Gamma_{\lambda}(T_{\sigma}) = \Gamma_{\lambda}(T_{\sigma}-1) + \Gamma_{\lambda}(1) \in k(t).$
\end{proof}

\printbibliography

@article{IonWu25,
 author = {Ion, Bogdan and Wu, Dongyu},
 title = {The stable limit daha: the structure of the standard representation},
 fjournal = {The Quarterly Journal of Mathematics},
 journal = {Q. J. Math.},
 issn = {0033-5606},
 volume = {76},
 number = {3},
 pages = {945--982},
 year = {2025},
 language = {English},
 doi = {10.1093/qmath/haaf028},
 keywords = {81-XX,60-XX},
 zbMATH = {8139153}
}

@misc{BHMPS,
 author = {Blasiak, Jonah and Haiman, Mark and Morse, Jennifer and Pun, Anna and Seelinger, George H.},
 title = {The nonsymmetric shuffle theorem},
 year = {2025},
 howpublished = {Preprint, {arXiv}:2509.24040 [math.{CO}] (2025)},
 keywords = {05E05},
 url = {https://arxiv.org/abs/2509.24040},
 arXiv = {arXiv:2509.24040}
}

@article{GKV14,
 author = {Gorin, Vadim and Kerov, Sergei and Vershik, Anatoly},
 title = {Finite traces and representations of the group of infinite matrices over a finite field},
 fjournal = {Advances in Mathematics},
 journal = {Adv. Math.},
 issn = {0001-8708},
 volume = {254},
 pages = {331--395},
 year = {2014},
 language = {English},
 doi = {10.1016/j.aim.2013.12.028},
 keywords = {22D10,22D25,20G40},
 zbMATH = {6285004},
 Zbl = {1286.22004}
}

@incollection{Olshanski03,
 author = {Olshanski, Grigori},
 title = {An introduction to harmonic analysis on the infinite symmetric group.},
 booktitle = {Asymptotic combinatorics with applications to mathematical physics. A European mathematical summer school held at the Euler Institute, St. Petersburg, Russia, July 9--20, 2001},
 isbn = {3-540-40312-4},
 pages = {127--160},
 year = {2003},
 publisher = {Berlin: Springer},
 language = {English},
 keywords = {05E10,60G55,20C32,60B15},
 zbMATH = {1974574},
 Zbl = {1035.05100}
}

@article{BW_Delta,
      title={Delta Operators on Almost Symmetric Functions}, 
      author={Bechtloff Weising, Milo},
      year={2024},
      eprint={2405.09846},
      archivePrefix={arXiv},
      primaryClass={math.RT},
      url={https://arxiv.org/abs/2405.09846}, 
}

@article{BW25,
 author = {Bechtloff Weising, Milo J.},
 title = {Stable-limit non-symmetric {Macdonald} functions},
 fjournal = {Algebraic Combinatorics},
 journal = {Algebr. Comb.},
 issn = {2589-5486},
 volume = {7},
 number = {6},
 pages = {1845--1878},
 year = {2025},
 language = {English},
 doi = {10.5802/alco.395},
 keywords = {05E05,05E10,33D52,20C08},
 zbMATH = {7966781},
 Zbl = {1555.05230}
}

@article{CEF15,
 author = {Church, Thomas and Ellenberg, Jordan S. and Farb, Benson},
 title = {{FI}-modules and stability for representations of symmetric groups},
 fjournal = {Duke Mathematical Journal},
 journal = {Duke Math. J.},
 issn = {0012-7094},
 volume = {164},
 number = {9},
 pages = {1833--1910},
 year = {2015},
 language = {English},
 doi = {10.1215/00127094-3120274},
 keywords = {55N25,05E10,20J06,20C30,18B99,55R80},
 zbMATH = {6464862},
 Zbl = {1339.55004}
}

@article{SW16,
 author = {Shareshian, John and Wachs, Michelle L.},
 title = {Chromatic quasisymmetric functions},
 fjournal = {Advances in Mathematics},
 journal = {Adv. Math.},
 issn = {0001-8708},
 volume = {295},
 pages = {497--551},
 year = {2016},
 language = {English},
 doi = {10.1016/j.aim.2015.12.018},
 keywords = {05E05,11B68},
 zbMATH = {6570863},
 Zbl = {1334.05177}
}

@article{VK98,
 author = {Vershik, A. M. and Kerov, S. V.},
 title = {On an infinite-dimensional group over a finite field},
 fjournal = {Functional Analysis and its Applications},
 journal = {Funct. Anal. Appl.},
 issn = {0016-2663},
 volume = {32},
 number = {3},
 pages = {147--152},
 year = {1998},
 language = {English},
 doi = {10.1007/BF02463335},
 keywords = {22D10,19A31,20B30,22D15},
 zbMATH = {1292495},
 Zbl = {1089.22006}
}

@article{Okounkov97,
 author = {Okounkov, Andrei},
 title = {On representations of the infinite symmetric group},
 fjournal = {Journal of Mathematical Sciences (New York)},
 journal = {J. Math. Sci., New York},
 issn = {1072-3374},
 volume = {96},
 number = {5},
 pages = {1},
 year = {1997},
 language = {English},
 doi = {10.1007/BF02175834},
 keywords = {20C32},
 zbMATH = {1408561},
 Zbl = {0959.20016}
}

@book{BO16,
 author = {Borodin, Alexei and Olshanski, Grigori},
 title = {Representations of the infinite symmetric group},
 fseries = {Cambridge Studies in Advanced Mathematics},
 series = {Camb. Stud. Adv. Math.},
 volume = {160},
 isbn = {978-1-107-17555-6; 978-1-316-79857-7},
 year = {2016},
 publisher = {Cambridge: Cambridge University Press},
 language = {English},
 doi = {10.1017/CBO9781316798577},
 keywords = {20-02,20C32,20C15,20B07,05E10},
 zbMATH = {6643233},
 Zbl = {1364.20001}
}

@article{Haiman,
 author = {Haiman, Mark},
 title = {Hecke algebra characters and immanant conjectures},
 fjournal = {Journal of the American Mathematical Society},
 journal = {J. Am. Math. Soc.},
 issn = {0894-0347},
 volume = {6},
 number = {3},
 pages = {569--595},
 year = {1993},
 language = {English},
 doi = {10.2307/2152777},
 keywords = {20G05,20G40,05E05,05E15,20C20,14M15,20H20},
 zbMATH = {279838},
 Zbl = {0817.20048}
}

@incollection{MSW,
 author = {Morales, Alejandro H. and Skandera, Mark A. and Wang, Jiayuan},
 title = {{LLT} polynomials and {Hecke} algebra traces},
 booktitle = {Proceedings of the 13th edition of the conference on random generation of combinatorial structures. Polyominoes and tilings, GASCom 2024, Bordeaux, France, June 24--28, 2024},
 pages = {150--155},
 year = {2024},
 publisher = {Waterloo: Open Publishing Association (OPA)},
 language = {English},
 doi = {10.4204/EPTCS.403.31},
 keywords = {05E10,20C08},
 zbMATH = {8089360}
}

@book{Schneider,
 author = {Schneider, Peter},
 title = {Nonarchimedean functional analysis},
 fseries = {Springer Monographs in Mathematics},
 series = {Springer Monogr. Math.},
 issn = {1439-7382},
 isbn = {3-540-42533-0},
 year = {2002},
 publisher = {Berlin: Springer},
 language = {English},
 keywords = {46S10,46-02,47S10},
 zbMATH = {1683517},
 Zbl = {0998.46044}
}

@misc{BWMurnaghan,
      title={Murnaghan-Type Representations for the Positive Elliptic Hall Algebra}, 
      author={Bechtloff Weising, Milo},
      year={2024},
      eprint={2405.00756},
      archivePrefix={arXiv},
      primaryClass={math.CO},
      url={https://arxiv.org/abs/2405.00756}, 
}

@article{AST17,
 author = {Andersen, Henning Haahr and Stroppel, Catharina and Tubbenhauer, Daniel},
 title = {Semisimplicity of {Hecke} and (walled) {Brauer} algebras},
 fjournal = {Journal of the Australian Mathematical Society},
 journal = {J. Aust. Math. Soc.},
 issn = {1446-7887},
 volume = {103},
 number = {1},
 pages = {1--44},
 year = {2017},
 language = {English},
 doi = {10.1017/S1446788716000392},
 keywords = {17B37,17B10,20C08},
 zbMATH = {6806224},
 Zbl = {1403.17016}
}

@article {M_1938,
    AUTHOR = {Murnaghan, F. D.},
     TITLE = {The {A}nalysis of the {K}ronecker {P}roduct of {I}rreducible
              {R}epresentations of the {S}ymmetric {G}roup},
   JOURNAL = {Amer. J. Math.},
  FJOURNAL = {American Journal of Mathematics},
    VOLUME = {60},
      YEAR = {1938},
    NUMBER = {3},
     PAGES = {761--784},
      ISSN = {0002-9327,1080-6377},
   MRCLASS = {99-04},
  MRNUMBER = {1507347},
       DOI = {10.2307/2371610},
       URL = {https://doi.org/10.2307/2371610},
}

@article{Ion_2022, title={THE STABLE LIMIT DAHA AND THE DOUBLE DYCK PATH ALGEBRA}, DOI={10.1017/S1474748022000445}, journal={Journal of the Institute of Mathematics of Jussieu}, publisher={Cambridge University Press}, author={Ion, Bogdan and Wu, Dongyu}, year={2022}, pages={1–46}}

@article {DL_2011,
    AUTHOR = {Dunkl, C. F. and Luque, J.-G.},
     TITLE = {Vector valued {M}acdonald polynomials},
   JOURNAL = {S\'{e}m. Lothar. Combin.},
  FJOURNAL = {S\'{e}minaire Lotharingien de Combinatoire},
    VOLUME = {66},
      YEAR = {2011/12},
     PAGES = {Art. B66b, 68},
      ISSN = {1286-4889},
   MRCLASS = {05E10 (05E05 05E15 33D80)},
  MRNUMBER = {2971011},
MRREVIEWER = {Himmet\ Can},
}

\end{document}